\newtheorem{thm}{Theorem}[section]
\newtheorem{cor}[thm]{Corollary}
\newtheorem{lem}[thm]{Lemma}
\newtheorem{prop}[thm]{Proposition}
\theoremstyle{definition}
\newtheorem{defn}[thm]{Definition}
\theoremstyle{remark}
\newtheorem{rem}[thm]{Remark}
\numberwithin{equation}{section}
\newcommand{\R}{\mathbb R}
\newcommand{\eps}{\epsilon}
\newcommand{\p}{\partial}
\newcommand{\di}{\displaystyle}
\newcommand{\comment}[1]{}
\begin{document}

\title[A one-phase  free boundary problem for the fractional Laplacian]{Regularity in a one-phase  free boundary problem for the fractional Laplacian}
\author{D. De Silva}
\address{Department of Mathematics, Barnard College, Columbia University, New York, NY 10027}
\email{\tt  desilva@math.columbia.edu}
\author{J.M. Roquejoffre}
\address{Institut de Mathematiques (UMR CNRS 5640), Universit\'e Paul Sabatier, 31062 Toulouse Cedex 4, France}
\email{\tt roque@mip.ups-tlse.fr}


\begin{abstract} For a one-phase free boundary problem involving a fractional Laplacian, we prove that ``flat free boundaries" are $C^{1,\alpha}$. We recover the  
regularity results of Caffarelli for viscosity solutions of the classical Bernoulli-type free boundary problem with the standard Laplacian.
\end{abstract}
\maketitle

\section{Introduction}
The purpose of this paper is to answer a question left open in \cite{CafRS} on the regularity of 
free boundaries for the fractional Laplacian of order $\alpha$ - with $0<\alpha<1$, in the particular case 
$\alpha=1/2$. Here is the setting: consider $g$ a viscosity solution (this notion will be defined 
properly later) of the
following free boundary problem in the  ball $B_1 \subset \R^{n+1}= \R^n \times \R,$ 

\begin{equation}\label{FBintro}\begin{cases}
\Delta g = 0, \quad \textrm{in $B_1^+(g):= B_1 \setminus \{(x,0) : g(x,0) = 0 \} ,$}\\
\dfrac{\p g}{\p U}= 1, \quad \textrm{on  $F(g):=  \p_{\R^n}\{x \in \mathcal{B}_1 : g(x,0)>0\} \cap \mathcal{B}_1,$} 
\end{cases}\end{equation}
where \begin{equation}\label{nabla_U}
\dfrac{\p g}{\p U}(x_0): = \di\lim_{(t,z) \rightarrow (0,0)} \frac{g(x_0+t\nu(x_0),z)} {U(t, z)},  \quad x_0 \in F(g) \end{equation} and  $\mathcal{B}_r \subset \R^n$ is the $n$-dimensional ball of radius $r$ (centered at 0).

The function $U(t,z)$ is the harmonic extension of $\sqrt{t^+}$ to the upper half-plane $\R^{2}_+=\{(t,z) \in \R \times \R, z>0\}$, reflected evenly across $\{z=0\}$. Precisely, 
after the polar change of coordinates $$t= r\cos\theta, \quad  z=r\sin\theta, \quad r\geq 0, -\pi \leq  \theta \leq \pi,$$ $U$ is given by
\begin{equation}\label{U}U(t,z ) = r^{1/2}\cos \frac \theta 2. \end{equation}

One can show that if  a function $g \geq 0$ is harmonic in $B_1^+(g) $ and $F(g)$ is smooth around a point $x_0$ then $\frac{\p g}{\p U}(x_0)$ exists always and it is finite.  
Here $\nu(x_0)$ denotes as usually the normal to $F(g)$ at $x_0$ pointing toward  $\{x : g(x,0)>0\}$.

In this paper, we introduce the notion of viscosity solutions to \eqref{FBintro}  and prove the following result about the regularity of their free boundaries under appropriate flatness assumptions (for all the relevant definitions see Section 2).

\begin{thm} \label{mainT}There exists a universal constant $\bar \eps >0$, such that if $g$ is a viscosity solution to \eqref{FBintro}  satisfying
\begin{equation}\label{Flat1}\|g- U\|_{L^\infty(\overline{B}_1)} \leq \bar \eps, \end{equation} and 
\begin{equation}\label{Flat2} \{x \in \mathcal{B}_1 : x_n \leq -\bar \eps\} \subset \{x \in \mathcal{B}_1 : g(x,0)=0\} \subset \{x \in \mathcal{B}_1 : x_n \leq \bar \eps \},\end{equation} then $F(g)$ is $C^{1,\alpha}$ in $\mathcal{B}_{1/2}$.
\end{thm}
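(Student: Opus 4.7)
The plan is to establish Theorem~\ref{mainT} via Caffarelli's \emph{improvement of flatness} scheme, adapted to the extension formulation of the half-Laplacian problem. The key reduction is to produce universal constants $\bar\epsilon,\rho\in(0,1)$ such that whenever $g$ is a viscosity solution with
\[
U(x\cdot\nu - \epsilon,z)\le g(x,z)\le U(x\cdot\nu+\epsilon,z)\quad\text{in }B_1,\qquad \epsilon\le\bar\epsilon,
\]
there is a unit vector $\nu'$ with $|\nu-\nu'|\le C\epsilon$ and
\[
U(x\cdot\nu'-\tfrac{\epsilon\rho}{2},z)\le \rho^{-1/2}g(\rho x,\rho z)\le U(x\cdot\nu'+\tfrac{\epsilon\rho}{2},z)\quad\text{in }B_1.
\]
The normalization by $\rho^{-1/2}$ is forced by the $1/2$-homogeneity of $U$ visible in \eqref{U}. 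Once this dichotomy is in hand, iterating at dyadic scales $\rho^k$ produces a Cauchy sequence of normals $\nu_k$ with $|\nu_{k+1}-\nu_k|\le C 2^{-k}\epsilon$, and this geometric rate delivers the $C^{1,\alpha}$ regularity of $F(g)$ in $\mathcal{B}_{1/2}$ from the starting hypotheses \eqref{Flat1}--\eqref{Flat2}.

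The improvement of flatness is to be proved by a compactness-linearization argument, preceded by a \emph{flat Harnack inequality}: if $g$ is trapped between two translates $U(x\cdot\nu\pm\epsilon,z)$ in a ball, then on a smaller ball the flatness strip shrinks by a universal factor. I would establish this by constructing one-parameter families of strict super- and subsolutions of \eqref{FBintro} of the form $U(x\cdot\nu\pm\epsilon\phi(x,z),z)$, where $\phi$ is a suitable positive harmonic perturbation, and invoking viscosity comparison. Verifying that these barriers are strict viscosity super/subsolutions --- in particular, that the resulting perturbation of $\partial g/\partial U$ carries the correct sign on the perturbed free boundary --- requires the explicit polar formula \eqref{U} and is the main computation of the Harnack step.

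The heart of the proof is the linearization. Arguing by contradiction, one extracts counterexamples $g_k$ with flatness $\epsilon_k\to 0$; by the flat Harnack estimate the renormalized heights $\tilde g_k := (g_k - U(x\cdot\nu,z))/\epsilon_k$ are equi-H\"older on compact subsets of $B_{1/2}\setminus\{z=0,\ x\cdot\nu< 0\}$. Passing to a limit $\tilde g_0$, one must identify the \emph{linearized problem} it solves: formally, $\tilde g_0$ is harmonic in $B_{1/2}\setminus\{z=0,\ x\cdot\nu\le 0\}$, even across $\{z=0\}$, and the free boundary relation $\partial g/\partial U=1$, linearized around $U$, should translate into a Neumann-type condition on the reduced free boundary $\{x\cdot\nu=0,\ z=0\}$. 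The principal obstacle --- and, to my mind, the hardest part of the argument --- is to prove interior $C^{1,\alpha}$ regularity for this degenerate linearized problem at the origin, in the form of an estimate $|\tilde g_0(x,z) - \ell(x')|\le C r^{1+\alpha}$ for some tangential affine function $\ell$ (with $r=\sqrt{|x|^2+z^2}$). The correction $\ell$ then supplies, in the original variables, the rotated direction $\nu'\approx\nu + \epsilon\nabla_{x'}\ell$ that closes the contradiction. The restriction to the half-Laplacian ($\alpha=1/2$) enters decisively here, since it makes the Caffarelli--Silvestre extension a genuine harmonic function on $\R^{n+1}$, reducing the linearized problem to a classical Neumann problem accessible via reflection and boundary-Harnack techniques.
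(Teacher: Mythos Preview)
Your overall architecture --- reduce \eqref{Flat1}--\eqref{Flat2} to a trapping $U(x\cdot\nu-\eps,z)\le g\le U(x\cdot\nu+\eps,z)$, prove a flat Harnack inequality, run a compactness argument to a linearized problem, and read off $C^{1,\alpha}$ from the regularity of the limit --- is exactly the paper's strategy. Two points, however, need correction.

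\textbf{The normalization.} Your choice $\tilde g_k=(g_k-U)/\eps_k$ does not stay bounded: from $U(X-\eps_k e_n)\le g_k\le U(X+\eps_k e_n)$ one only gets $|\tilde g_k|\lesssim U_n\sim r^{-1/2}$ near the edge $L=\{x_n=0,\ z=0\}$, so the limit $\tilde g_0$ blows up there and the estimate $|\tilde g_0-\ell(x')|\le Cr^{1+\alpha}$ you aim for cannot hold as stated. The paper avoids this by linearizing in the \emph{domain} rather than in the \emph{range}: it introduces the (possibly multivalued) $\eps$-domain variation $\tilde g_\eps$ defined implicitly by $U(X)=g(X-\eps\,\tilde g_\eps(X)\,e_n)$, which takes values in $[-1,1]$ automatically. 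The flat Harnack inequality (proved via a carefully constructed family of radial subsolutions $v_R$, not barriers of the form $U(x\cdot\nu\pm\eps\phi,z)$ with harmonic $\phi$) then gives Hausdorff compactness of the graphs of $\tilde g_{\eps_k}$, and the limit $\tilde g_\infty$ is a bounded H\"older function on all of $B_{1/2}$. Your $\tilde g_0$ is, morally, $U_n\,\tilde g_\infty$; dividing out the singular weight is precisely what the hodograph-type change of variables accomplishes.

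\textbf{The linearized problem.} It is not a classical Neumann problem. The limit $\tilde g_\infty$ solves
\[
\Delta(U_n w)=0\ \text{in }B_1\setminus P,\qquad |\nabla_r w|(X_0)=0\ \text{on }B_1\cap L,
\]
in a suitable viscosity sense, where $|\nabla_r w|(X_0)=\lim_{r\to 0}(w(x_0',x_n,z)-w(x_0',0,0))/r$. Only when restricted to functions depending on $(x',r)$ alone does this collapse to a Neumann problem for the Laplacian; in general the weight $U_n$ is genuinely present and the problem is nonstandard. The paper devotes a full section to its regularity theory (existence of classical solutions via a weighted energy $\int U_n^2|\nabla h|^2$, a Harnack inequality for minimizers, and the expansion $w(X)=w(X_0)+a\cdot(x'-x_0')+O(|x'-x_0'|^2+r^{3/2})$), which is what feeds back into the improvement of flatness. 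Your sketch underestimates this step; reflection and boundary Harnack alone do not settle it.
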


Consequentely $\frac{\p g}{\p U}$ exists and $g$ is a classical solution to \eqref{FBintro}. Moreover, given a point $x_0$ on the free boundary $F(g)$ if one knows that a blow-up sequence of $g$ around $x_0$ ``converges" to the function $U$, then the flatness assumptions \eqref{Flat1}-\eqref{Flat2} are satisfied and hence the free boundary is $C^{1,\alpha}$ around that point. 

Assumption \eqref{Flat1} is a (slightly improved) nondegeneracy assumption which is usually true, and certainly satisfied in the framework of \cite{CafRS}.
In any case it could be removed, but we keep it for simplicity. 

The interest in our free boundary problem \eqref{FBintro} arises from a natural generalization of the following classical Bernoulli-type one-phase free boundary problem:
\begin{equation}\label{AC}\begin{cases}
\Delta u = 0, \quad \textrm{in $\Omega \cap \{u > 0 \} ,$}\\
|\nabla u|= 1, \quad \textrm{on  $\Omega \cap  \p \{u>0\},$} 
\end{cases}\end{equation}
with $\Omega$ a domain in $\R^n.$
A pioneering investigation was that of Alt and Caffarelli \cite{AC} (variational context), and then Caffarelli  \cite{C1,C2,C3} (viscosity solutions context).
See also \cite{CS} for a complete survey.
 
A special class of viscosity solutions to \eqref{FBintro} (with the constant 1 replaced by a precise constant $A$) is provided by minimizers to the energy functional
 $$J(v,B_1) = \int_{B_1} |\nabla v|^2 dx dz + \mathcal{L}_{\R^n} (\{v>0\} \cap \R^n \cap B_1).$$ 
Such minimizers have been investigated by Caffarelli, Sire and the second author in \cite{CafRS}, where general properties (optimal regularity, nondegeneracy,
classification of global solutions), corresponding to those proved by Alt and Caffarelli in \cite{AC} for the Bernoulli-type problem \eqref{AC}, have been obtained.

As for the next issue, i.e. the regularity of the free boundary,
 here is what is proved in \cite{CafRS} in the setting of \eqref{FBintro}:

{\it Let $u(x,y,z)$ be a solution of  \eqref{FBintro} in $B_1 \subset \R^3$. Assume that the free boundary of $u$ is a Lipschitz graph in 
$\mathcal{B}_1$. Then it is a $C^1$ graph in $\mathcal{B}_{1/2}$.}

The idea of this result is that (i) one can find two points on
each side of 0 where the free boundary is flatter than what is dictated by the Lipschitz constant, (ii) this improvement could be propagated inside a small ball 
of controlled size. Thus the three-dimensionality of the problem (or, equivalently, the one-dimensionality of the free boundary) is heavily used. Moreover, this 
argument does not yield the extra H\"older regularity of the derivative - which we believe could itself yield $C^\infty$ regularity of the free boundary. What we propose in this paper is to fill the gap between $C^1$ and $C^{1,\alpha}$, in arbitrary space dimension.

In view of the results in \cite{CafRS}, one knows that the flatness assumptions \eqref{Flat1}, \eqref{Flat2} in our main Theorem  \ref{mainT} are satisfied around each point of the reduced part of the free boundary of a minimizer (see Propositions 4.2 and Theorems 1.2,1.3 in \cite{CafRS}). We thus obtain the following corollary to Theorem \ref{mainT}.

\begin{cor} Let $v$ be a local minimizer to  $$J(v,B_1) = \int_{B_1} |\nabla v|^2 dx dz + \mathcal{L}_{\R^n} (\{v>0\} \cap \R^n \cap B_1).$$ 
 Then the reduced part of the free boundary $F^*(v)$ is $C^{1,\alpha}.$
\end{cor}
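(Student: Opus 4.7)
The plan is to reduce the corollary to a direct application of Theorem \ref{mainT}; the results of \cite{CafRS} supply the flatness hypotheses \eqref{Flat1}--\eqref{Flat2} at small scales around each reduced free boundary point. As a preliminary, I would check that any local minimizer $v$ of $J$ is a viscosity solution of \eqref{FBintro} in the sense of Section 2, with the constant $1$ on the free boundary possibly replaced by a canonical constant $A>0$ arising from inner variations (established in \cite{CafRS}). Both the problem and Theorem \ref{mainT} are invariant under the parabolic-type rescalings $v(x,z) \mapsto \lambda^{-1/2} v(\lambda x, \lambda z)$, and the constant $A$ can be absorbed by multiplying $v$ by a scalar, so we may assume $A=1$ throughout.

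Next, fix $x_0 \in F^*(v)$ and consider the blow-up family
$v_r(y,z) := r^{-1/2}\, v(x_0 + ry,\, rz).$
By Propositions 4.2 and Theorems 1.2--1.3 of \cite{CafRS}, which supply the optimal $1/2$-Hölder growth, the nondegeneracy, and the classification of global minimizers at reduced points, along a suitable subsequence $r_k \to 0$ one has $v_{r_k} \to U(\,y\cdot\nu,\, z)$ locally uniformly for some unit vector $\nu$, with Hausdorff convergence of the free boundaries on compact sets. After rotating coordinates so that $\nu = e_n$ and choosing $r$ sufficiently small, one obtains both \eqref{Flat1} and \eqref{Flat2} for $v_r$ in $B_1$ with any prescribed $\bar \eps > 0$.

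Applying Theorem \ref{mainT} to $v_r$ with the universal $\bar\eps$ of that theorem then yields that $F(v_r) \cap \mathcal{B}_{1/2}$ is a $C^{1,\alpha}$ graph, and rescaling back gives $C^{1,\alpha}$ regularity of $F^*(v)$ in a neighborhood of $x_0$. Covering $F^*(v)$ by finitely many such neighborhoods (on compact subsets) completes the proof. The only substantive ingredient used in this reduction is the blow-up classification at reduced points, which is already available in \cite{CafRS}; the real difficulty, and the main obstacle of the paper, is the proof of Theorem \ref{mainT} itself.
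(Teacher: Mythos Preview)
Your proposal is correct and follows essentially the same approach as the paper: the paper itself does not give a detailed proof but simply observes (in the paragraph preceding the corollary) that the flatness hypotheses \eqref{Flat1}--\eqref{Flat2} are satisfied around each reduced free boundary point by Proposition~4.2 and Theorems~1.2--1.3 of \cite{CafRS}, after which Theorem~\ref{mainT} applies. You have spelled out the blow-up argument and the normalization of the free boundary constant in somewhat more detail, but the reduction is identical.
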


Let us now recall how the fractional Laplacian is involved in  \eqref{FBintro}. Consider, for $\alpha\in(0,1)$, the model (which generalizes \eqref{AC})
\begin{equation}\label{ACalpha}\begin{cases}
(-\Delta)^\alpha u = 0, \quad \textrm{in $\Omega \cap \{u > 0 \} ,$}\\
\di\lim_{t \rightarrow 0^+} \dfrac{u
(x_0+t\nu(x_0))}{t^\alpha} = const., \quad \textrm{on  $ \Omega \cap \p\{u>0\},$} 
\end{cases}\end{equation}
with $u$ defined on the whole $\R^n$ with prescribed values outside of $\Omega.$ Recall that, up to a normalization constant
$$(-\Delta)^\alpha u (x) = PV \int_{\R^n} \frac{u(x) - u(y)}{|x-y|^{n+2\alpha}} dy$$
where $PV$ denotes the Cauchy principal value.

When studying local property of the free boundary in \eqref{ACalpha}, the non-locality of the fractional Laplace makes the problem quite delicate.
To avoid this ``contrast" one can make use of an extension property proved by Caffarelli and Silvestre in \cite{CSi} (see for example the work of Caffarelli,  Savin and the second author \cite{CafRSa},  the paper \cite{CafRS}, and the work of Caffarelli, Salsa and Silvestre \cite{CSS} where this strategy has been employed.)
 Precisely, let $u \in C^2(\R^n)$ and let $v$ solve  
 \begin{equation}\label{extform}\begin{cases} - {\mathrm {div}}(z^\beta \nabla v) = 0, \quad \text{in $\R^{n+1}_+= \{(x,z) \in \R^n \times \R, z>0\}$,}\\ v(x,0)=u(x), \quad \text{on $\R^n$,}\end{cases}\end{equation}
 with $\beta=1-2\alpha$.
 Then, 
 \begin{equation}(-\Delta^\alpha)u(x) = -\di\lim_{z \rightarrow 0} (z^\beta v_z(x,z)).\end{equation}
 
 After extending $v$ evenly across the hyperplane $\{z=0\}$, the first equation in \eqref{extform} can be thought in the whole $\R^{n+1}.$
  In view of this formula, the focus shifts on the free boundary problem,
 \begin{equation}\label{FBalpha}\begin{cases}
- {\mathrm{div}}(|z|^\beta \nabla v) = 0, \quad \textrm{in $B_1 \setminus \{(x,0) : v(x,0) = 0 \} ,$}\\
 \di\lim_{(t,z) \rightarrow (0,0)} \dfrac{v(x_0+t\nu(x_0),z)} {U(t, z)} = const., \quad \textrm{on  $\p_{\R^n}\{x  : v(x,0)>0\} \cap \mathcal{B}_1,$} 
\end{cases}\end{equation}
where $U(t,z)$ solves \eqref{extform} in $\R^2$ with $u(t)= (t^+)^\alpha$ and it is extended evenly across $\{z=0\}.$

 For simplicity of exposition we have focused here on the case when $\alpha=1/2$ (in which case the extension formula of \cite{CSi} is a well-know fact). However our result can 
 probably be extended to the general case $\alpha \in (0,1).$

 Our definition of viscosity solution to \eqref{FBintro} is similar to the one introduced by Caffarelli in \cite{C1,C2} to deal with the problem \eqref{AC}. Indeed our result generalizes to this non-local setting the ``flatness implies regularity" theory  developed by Caffarelli in \cite{C2}. Let us also mention that Theorem  \ref{mainT} is probably optimal. Indeed, quite similarly to what happens for minimal surfaces,
singular free boundaries for the Bernoulli-type problem  \eqref{AC} were discovered by Jerison and the first author \cite{DSJ}.

 Let us now describe our strategy to obtain Theorem  \ref{mainT}. The main idea to prove Theorem  \ref{mainT} is to show that 
$F(g)$ enjoys an ``improvement of flatness" property, that is if $F(g)$ oscillates $\eps$ away from a hyperplane in $\mathcal{B}_1$ ($\eps$ small), then
in $\mathcal{B}_{\rho}$ ($\rho$ universal) it oscillates $\eps \rho/2$ away from possibly a
different hyperplane.  To obtain this improvement of flatness, we use a compactness argument which goes as follows: assume one cannot do it
however flat the free boundary is, then we blow it up it in the $x_n$ direction, thus linearizing the problem into a limiting one, for which we prove that improvement of flatness holds
- thus   a contradiction. This scheme was used by Savin \cite{S} to prove regularity of small solutions of fully nonlinear equations - including an elegant proof of the De Giorgi theorem for minimal surfaces. The key tool is a geometric Harnack inequality that localizes the free boundary well, and allows
the passage to the limit under rescalings. 

Such compactness arguments can also be found in Wang  \cite{W} for the regularity of the solutions of $p$-Laplace equations.  More recently, the first author followed this strategy in \cite{D} to provide a new proof of the Caffarelli ``flat implies smooth'' theory. The scheme is the same here, up to the fact that the construction of the sub-solution
opening the way to the Harnack inequality is different to that of \cite{D}, and that the linear problem obtained eventually is non-standard (and interesting in its own).

The paper is organized as follows. In Section 2 we introduce the notion of viscosity solutions to \eqref{FBintro} and we prove a basic Comparison Principle for such solutions. In Section 3 we explain how to interpret our solutions as perturbations of $U$ after a ``domain variation" in the $e_n$-direction and we present basic facts about such domain variations. Throughout the paper, this will be a convenient way of thinking about our viscosity solutions. 
In Section 4 we describe the linear problem associated to \eqref{FBintro} and later in Section 8 we obtain a regularity result for its solutions. Section 5 contains some technical lemmas leading to the proof of Harnack inequality. In Section 6 we exhibit the proof of Harnack inequality using the barrier which we will construct later in the Appendix. Finally in Section 7 we provide the proof of the ``improvement of flatness"  property.  
\

{\bf Acknowledgement.} J.-M. R. is supported by the ANR grant PREFERED.

\section{Definitions and basic lemmas}

In this Section we introduce notation and definitions which we will use throughout the paper and we prove a standard basic lemma (Comparison Principle).
 
A point $X \in \R^{n+1}$ will be denoted by $X= (x,z) \in \R^n \times \R$. We will also use the notation $x=(x',x_n)$ with $x'=(x_1,\ldots, x_{n-1}).$ A ball in $\R^{n+1}$ with radius $r$ and center $X$ is denoted by $B_r(X)$ and for simplicity $B_r = B_r(0)$. Also we use $\mathcal{B}_r$ to denote the $n$-dimensional ball $B_r \cap \{z=0\}$. 

Let $v(X)$ be a continuous non-negative function in $B_1$. We associate to $v$ the following sets: \begin{align*}
& B_1^+(v) := B_1 \setminus \{(x,0) : v(x,0) = 0 \} \subset \R^{n+1};\\
& \mathcal{B}_1^+(v):= B_1^+(v) \cap \mathcal{B}_1 \subset \R^{n};\\
& F(v) := \p_{\R^n}\mathcal{B}_1^+(v)\cap \mathcal{B}_1 \subset \R^{n};\\
& \mathcal{B}_1^0(v) : =Int_{\R^n}\{x \in \R^n : v(x,0)=0\} \subset \R^{n}.
\end{align*}  Often subsets of $\R^n$ are embedded in $\R^{n+1}$, as it will be clear from the context. 


We may refer to $\mathcal{B}_1^0(v)$  as to the zero plate of $v$, while  $F(v)$ is called the free boundary of $v$.

We consider the free boundary problem

\begin{equation}\label{FB}\begin{cases}
\Delta g = 0, \quad \textrm{in $B_1^+(g) ,$}\\
\dfrac{\p g}{\p U}= 1, \quad \textrm{on $F(g)$}, 
\end{cases}\end{equation}
where 
$$\dfrac{\p g}{\p U}(x_0):=\di\lim_{(t,z) \rightarrow (0,0)} \frac{g(x_0+t\nu(x_0),z)} {U(t, z)} , \quad \textrm{$X_0=(x_0,0) \in F(g)$}.$$ 
Here $\nu(x_0)$ denotes the unit normal to $F(g)$ at $x_0$ pointing toward $\mathcal{B}_1^+(g)$ and $U$ is the function defined in \eqref{U}. Also, throughout the paper
we call $U(X):=U(x_n,z).$ 



We now introduce the notion of viscosity solutions to \eqref{FB}. 
First we need the following standard notion.

\begin{defn}Given $g, v$ continuous, we say that $v$
touches $g$ by below (resp. above) at $X_0 \in B_1$ if $g(X_0)=
v(X_0),$ and
$$g(X) \geq v(X) \quad (\text{resp. $g(X) \leq
v(X)$}) \quad \text{in a neighborhood $O$ of $X_0$.}$$ If
this inequality is strict in $O \setminus \{X_0\}$, we say that
$v$ touches $g$ strictly by below (resp. above).
\end{defn}

\begin{defn}\label{defsub} We say that $v \in C^2(B_1)$ is a (strict) comparison subsolution to \eqref{FB} if $v$ is a  non-negative function in $B_1$ which is even with respect to $z=0$ and it satisfies
\begin{enumerate} \item $\Delta v \geq 0$ \quad in $B_1^+(v)$;\\
\item $F(v)$ is $C^2$ and if $x_0 \in F(v)$ we have
$$v (x,z) = \alpha U((x-x_0) \cdot \nu(x_0), z)+ o(|(x-x_0,z)|^{1/2}), \quad \textrm{as $(x,z) \rightarrow (x_0,0),$}$$ with $$\alpha \geq 1,$$ where $\nu(x_0)$ denotes the unit normal at $x_0$ to $F(v)$ pointing toward $\mathcal{B}_1^+(v);$\\
\item Either $v$ is not harmonic in $B_1^+(v)$ or $\alpha >1.$
\end{enumerate} 
\end{defn}

Similarly one can define a (strict) comparison supersolution. 

\begin{defn}We say that $g$ is a viscosity solution to \eqref{FB} if $g$ is a  continuous non-negative function in $B_1$ which is even with respect to $z=0$ and it satisfies
\begin{enumerate} \item $\Delta g = 0$ \quad in $B_1^+(g)$;\\ \item Any (strict) comparison subsolution (resp. supersolution) cannot touch $g$ by below (resp. by above) at a point $X_0 = (x_0,0)\in F(g). $\end{enumerate}\end{defn}

\begin{rem} By standard arguments, if $g$ is a viscosity solution to \eqref{FB} and $F(g)$ is $C^1$ then $g$ is a classical solution of the free boundary problem (see for example Proposition 4.2 in \cite{CafRS}.) 
Moreover, as remarked in the Introduction one can show that given any continuous function  $g$ which is harmonic in $B_1^+(g) ,$ 
then $\frac{\p g}{\p U} (x_0)$  exists at each point around which $F(g)$ is $C^{1,\alpha}$. These facts motivate our problem and the definition of viscosity solution.
\end{rem}

\begin{rem}\label{rescale} We remark that if $g$ is a viscosity solution to \eqref{FB} in $B_\rho$, then $$g_{\rho}(X) = \rho^{-1/2} g(\rho X), \quad X \in B_1$$ is a viscosity solution to \eqref{FB} in $B_1.$
\end{rem}

We finish this section by stating and proving a comparison principle for problem \eqref{FB} which will be a key tool in the proof of Harnack inequality in Section 7.

\begin{lem}[Comparison principle] Let $g, v_t \in C(\overline{B}_1)$ be respectively a solution and a family of  subsolutions to \eqref{FB}, $t \in [0,1]$. Assume that
\begin{enumerate}
\item $v_0 \leq g,$ in $\overline{B}_1;$
\item $v_t \leq g$ on $\p B_1$ for all $t \in [0,1];$
\item $v_t < g$ on $\mathcal{F}(v_t)$ which is the boundary in $\p B_1$ of the set $\p \mathcal{B}_1^+(v_t) \cap \p \mathcal{B}_1$, for all $t\in [0,1];$
\item $v_t(x)$ is continuous in $(x,t) \in \overline{B}_1 \times [0,1]$ and $\overline{\mathcal{B}_1^+(v_t)}$ is continuous in the Hausdorff metric.
\end{enumerate}
Then 
\begin{equation*} v_t \leq g \quad \text{in $\overline{B}_1$, for all $t\in[0,1]$.}
\end{equation*}
\end{lem}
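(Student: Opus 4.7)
The plan is a standard continuity-and-contradiction argument along the parameter $t$. Define
$$A = \{t \in [0,1] : v_s \leq g \text{ in } \overline{B}_1 \text{ for all } s\in[0,t]\}.$$
Hypothesis (i) gives $0 \in A$, and the joint continuity in (iv) shows that $A$ is closed. By connectedness of $[0,1]$, it suffices to show $A$ is relatively open. Suppose by contradiction that $\bar t := \sup A < 1$. Pick $t_k \downarrow \bar t$ and $X_k \in \overline{B}_1$ with $v_{t_k}(X_k) > g(X_k)$; passing to a subsequence $X_k \to X_\ast$ and using (iv), one gets $v_{\bar t}(X_\ast) = g(X_\ast)$. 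The goal is to rule out every possible location of such a touching point.

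Interior touching splits into two subcases. If $v_{\bar t}(X_\ast)>0$, then $X_\ast \in B_1^+(v_{\bar t}) \cap B_1^+(g)$, and $w := g - v_{\bar t}$ is nonnegative and superharmonic on a connected neighborhood of $X_\ast$ (since $\Delta g = 0$ and $\Delta v_{\bar t} \geq 0$), attaining its minimum $0$ at $X_\ast$. The strong maximum principle forces $w\equiv 0$ on the connected component of $B_1^+(g)$ containing $X_\ast$, which is all of $B_1^+(g)$ (removing the hyperplane zero set from $B_1$ keeps it connected). Then $F(v_{\bar t}) = F(g)$, and at any $x_0 \in F(v_{\bar t})$ the subsolution expansion of Definition~\ref{defsub} yields $v_{\bar t} = \alpha U(\cdot\,,\cdot) + o(|\cdot|^{1/2})$ with $\alpha \geq 1$. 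Condition (3) of that definition then gives a contradiction: if $\alpha>1$ then $v_{\bar t}$ is a strict comparison subsolution touching $g$ by below at a point of $F(g)$, violating the viscosity property; if $\alpha=1$ then $v_{\bar t}$ is not harmonic in $B_1^+(v_{\bar t})$, contradicting $v_{\bar t} \equiv g$. The remaining interior possibility is $X_\ast = (x_\ast,0) \in F(v_{\bar t})$ with $v_{\bar t}(X_\ast) = g(X_\ast) = 0$. Since $v_{\bar t} \leq g$ and $v_{\bar t}>0$ on the positive side of its free boundary, $g$ is also positive there, hence $x_\ast \in F(g)$. But then $v_{\bar t}$ itself is a strict comparison subsolution of \eqref{FB} touching $g$ from below at a free boundary point of $g$, contradicting the viscosity condition directly.

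The delicate case, which I expect to be the main obstacle, is $X_\ast \in \partial B_1$. Here condition (ii) supplies equality at $X_\ast$ only, not strict inequality, so one must go further. If $v_{\bar t}(X_\ast) > 0$, one applies the Hopf boundary-point lemma to the superharmonic function $w = g - v_{\bar t}$ on a ball tangent to $\partial B_1$ from inside at $X_\ast$: either $w\equiv 0$ there (and we reduce to the interior analysis above), or $w$ is strictly positive in an interior neighborhood of $X_\ast$, which together with the uniform continuity in (iv) contradicts $v_{t_k}(X_k)>g(X_k)$ for interior $X_k \to X_\ast$ (and $X_k \in \partial B_1$ directly contradicts (ii)). The case $v_{\bar t}(X_\ast)=0$ with $X_\ast$ lying on the boundary trace of the free boundary is precisely what condition (iii) is designed to exclude, since the Hausdorff continuity in (iv) forces any such limit point to belong to $\mathcal{F}(v_{\bar t})$ where (iii) gives strict inequality. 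The boundary-case bookkeeping, together with the careful extension of the strong maximum principle/Hopf argument across the hyperplane $\{z=0\}$ (where $v_{\bar t}$ is even and only $C^2$ away from the free boundary), is the part that requires the most attention; the interior cases are an essentially mechanical application of the maximum principle and the viscosity-solution definition.
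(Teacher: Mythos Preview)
Your approach via a touching point at $\bar t = \sup A$ is different from the paper's, and the boundary case $X_\ast \in \partial B_1$ contains a genuine gap. Hypothesis (ii) allows equality on $\partial B_1$, so a contact point there is not by itself contradictory, and your Hopf argument does not close the gap. From $v_{t_k}(X_k) > g(X_k)$ you get $w(X_k) < v_{t_k}(X_k) - v_{\bar t}(X_k)$, and the right-hand side tends to $0$ only at a rate governed by $|t_k-\bar t|$ (via (iv)). Hopf, on the other hand, gives at best $w(X) \geq c\,\mathrm{dist}(X,\partial B_1)$ near $X_\ast$. Since nothing in the hypotheses ties $\mathrm{dist}(X_k,\partial B_1)$ to $|t_k-\bar t|$, both sides can go to zero compatibly and no contradiction follows. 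The phrase ``$w$ is strictly positive in an interior neighborhood'' does not help either: $w(X_\ast)=0$, so $w$ is not bounded below by a positive constant as $X_k\to X_\ast$.

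The paper avoids this difficulty entirely by proving openness of $A$ \emph{directly}, without seeking a touching point. Given $t_0\in A$, the viscosity definition applied to the strict subsolution $v_{t_0}$ forces $F(v_{t_0})\cap F(g)=\emptyset$; combined with (iii) this yields $F(v_{t_0})\cup\mathcal{F}(v_{t_0})\subset\{x\in\overline{\mathcal{B}}_1:g(x,0)>0\}$, i.e.\ a strict separation of the zero plates. By the Hausdorff continuity in (iv), this separation persists for all $t$ near $t_0$, so that $\mathcal{B}_1^+(v_t)\subset\mathcal{B}_1^+(g)$. One then applies only the \emph{weak} maximum principle to $g-v_t$ on $D=B_1\setminus(\mathcal{B}_1^0(v_t)\cup F(v_t))$: on $\partial B_1$ one has $v_t\le g$ by (ii), and on the rest of $\partial D$ one has $v_t=0\le g$. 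This gives $v_t\le g$ in $\overline{B}_1$ for $t$ near $t_0$, hence $A$ is open. No strong maximum principle, no Hopf lemma, and no boundary-point case analysis are needed.
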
\begin{proof}
Let $$A:= \{t \in [0,1] : v_t(x) \leq g(x) \ \ \text{on $\overline{B}_1$} \}.$$ In view of (i) and (iv) $A$ is closed and non-empty. Our claim will follow if we show that $A$ is open. 
Let $t_0 \in A$, then $v_{t_0} \leq g$ on $\overline{B}_1$ and by the definition of viscosity solution
$$F(v_{t_0}) \cap F(g) = \emptyset.$$ Together with (iii) this implies that 
$${\mathcal{B}_1^+(v_{t_0})} \subset  \mathcal{B}^+_1(g), \quad F(v_{t_0}) \cup \mathcal{F}(v_{t_0})  \subset  \{x \in \overline{\mathcal{B}}_1 : g(x,0)>0\}.$$ By (iv) this gives that for $t$ close to $t_0$
\begin{equation}\label{inclus}
{\mathcal{B}_1^+(v_{t})} \subset  \mathcal{B}^+_1(g), \quad F(v_{t}) \cup \mathcal{F}(v_{t})  \subset  \{x \in \overline{\mathcal{B}}_1 : g(x,0)>0\}.
\end{equation} 
Call $D:= B_1 \setminus (\mathcal{B}^0_1(v_t) \cup F(v_t)) .$
Combining \eqref{inclus} with assumption (ii) we get that 
$$v_t \leq g \quad \text{on $\p D,$}$$ and by the maximum principle the inequality holds also in $D$. Hence 
$$v_t \leq g \quad \text{in $\overline{B}_1,$}$$ and $t \in A$ which shows that $A$ is open.
\end{proof}

\begin{cor} \label{compmon}Let $g$ be a solution to \eqref{FB} and let $v$ be a  subsolution to \eqref{FB} in $B_2$ which is strictly monotone increasing in the $e_n$-direction in $B_2^+(v)$. Call
$$v_t(X):=v(X+ t e_n), \quad X \in B_1.$$
Assume that for $-1 \leq t_0 < t_1\leq 1$
$$v_{t_0} \leq g, \quad \text{in $\overline{B}_1,$}$$ and
$$v_{t_1} \leq g \quad  \text{on $\p B_1,$}  \quad v_{t_1} < g  \quad \text{on $\mathcal{F}(v_{t_1}).$}$$
Then 
\begin{equation*} v_{t_1} \leq g \quad \text{in $\overline{B}_1$.}
\end{equation*}

\end{cor}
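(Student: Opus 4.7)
The plan is to apply the Comparison Principle Lemma to the family $\{v_t\}_{t \in [t_0, t_1]}$ (after reparametrizing linearly to $[0,1]$ via $s \mapsto t_0 + s(t_1 - t_0)$); I verify its four hypotheses in turn.

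Hypothesis (i) is the standing assumption $v_{t_0} \le g$. Joint continuity in hypothesis (iv) of $(X,t) \mapsto v_t(X) = v(X + te_n)$ is immediate from continuity of $v$ on $\overline{B}_2$; the Hausdorff continuity of $\overline{\mathcal{B}_1^+(v_t)}$ in $t$ follows from strict $e_n$-monotonicity, which forces the positive plate of $v$ to be a super-graph $\{x_n > \psi(x')\}$, so that $F(v_t) = \{x_n = \psi(x') - t\}$ varies continuously with $t$. For (ii), monotonicity gives $v_t(X) = v(X + te_n) \le v(X + t_1 e_n) = v_{t_1}(X) \le g(X)$ on $\partial B_1$ for every $t \in [t_0, t_1]$.

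Hypothesis (iii) is the main obstacle. At $t = t_1$ it is given. For $t \in [t_0, t_1)$, at any $X = (x', x_n, 0) \in \mathcal{F}(v_t) \subset \partial B_1$ one has $v_t(X) = 0$, and I must show $g(X) > 0$. Since strict monotonicity separates the translated free boundaries, $x_n = \psi(x') - t > \psi(x') - t_1$, so $X$ lies strictly above $F(v_{t_1})$ and hence $v_{t_1}(X) > 0$. Combined with $v_{t_1}(X) \le g(X)$ on $\partial B_1$ from the hypothesis, this yields $g(X) \ge v_{t_1}(X) > 0 = v_t(X)$, giving the required strict inequality. With all four hypotheses verified, the Comparison Principle Lemma applied at $s = 1$ produces $v_{t_1} \le g$ on $\overline{B}_1$, which is the desired conclusion. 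The essential point throughout is that \emph{strict} (not merely weak) $e_n$-monotonicity is what keeps the free boundaries of the translates mutually disjoint on $\partial B_1$.
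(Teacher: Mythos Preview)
Your proof is correct and follows exactly the route the paper intends: the corollary is stated in the paper without proof, as an immediate application of the Comparison Principle Lemma, and you have simply filled in the verification of hypotheses (i)--(iv) for the translated family $v_t$. The only detail one might add is that the super-graph structure of the positive plate (and hence the Hausdorff continuity in (iv)) is guaranteed not just by strict monotonicity but also by the fact that $F(v)$ is $C^2$, which is part of the definition of a comparison subsolution.
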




\section{The function $\tilde g$} 

Let $g$ be a viscosity solution to \eqref{FB}. Throughout the paper, it will be convenient to interpret $g$ as a perturbation of $U$ via a domain variation in the $e_n$-direction. In this section we explain some basic facts about such domain variations. 

Let $\eps>0$ and let $g$ be a  continuous non-negative function in $\overline{B}_\rho$. 
Here and henceforth we denote by $P$ the half-hyperplane $P:= \{X \in \R^{n+1} : x_n \leq 0, z=0\}$ and by $L:= \{X \in \R^{n+1}: x_n=0, z=0\}$.  
To each $X \in \R^{n+1} \setminus P$ we associate $\tilde g_\eps(X) \subset \R$ via the formula 
\begin{equation}\label{deftilde} U(X) = g(X - \eps w e_n), \quad \forall w \in \tilde g_\eps(X).\end{equation} 

We sometimes call $\tilde g_\eps$  the $\eps$- domain variation associated to $g$. 
By abuse of notation, from now on  we write $ \tilde g_\eps(X)$ to denote any of the values in this set.

If g satisfies\begin{equation}\label{flattilde}U(X - \eps e_n) \leq g(X) \leq U(X+\eps e_n) \quad \textrm{in $B_\rho,$}\end{equation} then $$\tilde g_\eps (X) \in [-1,1].$$

Indeed call $$Y= X - \eps \tilde g_\eps (X)e_n,\quad X \in \R^{n+1} \setminus P.$$
Then according to \eqref{flattilde},
$$U(Y-\eps e_n) \leq g(Y)= U(Y+\eps \tilde g_\eps (X) e_n) \leq U(Y+\eps e_n).$$
Since $U(Y+\eps \tilde g_\eps (X) e_n) =U(X) >0$ our claim  follows from the strict monotonicity of $U$ in the $e_n$-direction (outside of $P$.)

Moreover, under the assumption \eqref{flattilde} for each $X \in B_{\rho-\eps} \setminus P$ there exists at least one value $\tilde{g}_\eps(X)$ such that 
\begin{equation} \label{til}U(X) = g(X - \eps \tilde{g}_\eps(X)e_n).\end{equation} 
Indeed, it follows from \eqref{flattilde} that 
$$g(X- \eps e_n) \leq U(X) \leq g(X+\eps e_n), \quad X \in B_{\rho-\eps}$$
and our claim follows by the continuity of $g(X-\delta \eps e_n), \delta \in [-1,1].$

Thus if \eqref{flattilde} holds, for all $\eps >0$  we can associate to $g$ a possibly multi-valued function $\tilde{g}_\eps$ defined at least on $B_{\rho-\eps} \setminus P$ and taking values in $[-1,1]$ which satisfies \eqref{til}. Moreover if $g$ is strictly monotone  in the $e_n$-direction in $B^+_\rho(g)$, then $\tilde{g}_\eps$ is single-valued. 

The following elementary lemma will be used to obtain a useful comparison principle for the $\eps$-domain variations of solutions to \eqref{FB}.

\begin{lem}\label{elem} Let $g, v$ be non-negative continuous functions in $B_\rho$. Assume that $g$ satisfies the flatness condition \eqref{flattilde} in $B_\rho$ and that $v$ is strictly increasing in the $e_n$-direction in $B_\rho^+(v).$ Then if 
$$v \leq g \quad \text{in $B_\rho,$}$$ and 
$\tilde v_\eps$ exists on $B_{\rho-\eps} \setminus P$ we have that
$$\tilde v_\eps \leq \tilde g_\eps \quad \text{on  $B_{\rho-\eps} \setminus P.$}$$
Viceversa, if $\tilde v_\eps$ exists on $B_s \setminus P$ and 
$$\tilde v_\eps \leq \tilde g_\eps \quad \text{on $B_s \setminus P,$}$$ then
$$v \leq g \quad \text{on $B_{s-\eps}$}.$$
\end{lem}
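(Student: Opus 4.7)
The strategy is to exploit the strict $e_n$-monotonicity of $v$, which renders $\tilde v_\eps$ single-valued on its domain of definition, together with the flatness of $g$, which confines any selection of $\tilde g_\eps$ to $[-1,1]$. The common theme in both directions is that a pointwise inequality between $v$ and $g$ at a point is equivalent, via the domain variation, to a vertical displacement in the $e_n$-direction between the preimages $X-\eps\tilde v_\eps(X)e_n$ and $X-\eps\tilde g_\eps(X)e_n$ of the common value $U(X)$.

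For the forward direction I would fix $X\in B_{\rho-\eps}\setminus P$ and any admissible value $w\in\tilde g_\eps(X)$, so that $g(X-\eps w e_n)=U(X)$. The hypothesis $v\leq g$ together with $U(X)=v(X-\eps\tilde v_\eps(X)e_n)$ yields $v(X-\eps w e_n)\leq v(X-\eps\tilde v_\eps(X)e_n)$. Since $v$ equals $U(X)>0$ at $X-\eps\tilde v_\eps(X)e_n$, this point lies in $B_\rho^+(v)$, and strict $e_n$-monotonicity of $v$ there forces it to sit at or above $X-\eps w e_n$ in the $e_n$-coordinate. This is precisely $\tilde v_\eps(X)\leq w$, and as $w$ was an arbitrary selection we obtain $\tilde v_\eps\leq\tilde g_\eps$ on $B_{\rho-\eps}\setminus P$.

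For the converse I would fix $Y\in B_{s-\eps}$ and aim at $v(Y)\leq g(Y)$. Assuming first that $g(Y)>0$, the flatness bounds $U(Y-\eps e_n)\leq g(Y)\leq U(Y+\eps e_n)$ combined with the continuity and strict monotonicity of $c\mapsto U(Y+c e_n)$ (on the region where the latter is positive) allow the intermediate value theorem to produce $c\in[-\eps,\eps]$ with $U(Y+c e_n)=g(Y)$. Setting $X:=Y+c e_n$, the estimate $|X|\leq|Y|+\eps<s$ places $X$ in $B_s$, and $X\notin P$ since $U(X)=g(Y)>0$. The value $w:=c/\eps\in[-1,1]$ satisfies $g(X-\eps w e_n)=g(Y)=U(X)$ and so is an admissible selection of $\tilde g_\eps(X)$, so the hypothesis reads $\tilde v_\eps(X)\leq w$. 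Then $Y':=X-\eps\tilde v_\eps(X)e_n$ has the same transverse coordinates as $Y$ and lies at or above $Y$ in $e_n$; combined with $v(Y')=U(X)=g(Y)$ and strict $e_n$-monotonicity of $v$, this gives $v(Y)\leq v(Y')=g(Y)$. The residual case $g(Y)=0$, which by flatness occurs only when $Y$ has $z$-coordinate $0$ and $x_n$-coordinate at most $\eps$, I would handle by perturbing $Y$ slightly off the hyperplane $\{z=0\}$: the perturbed point lies off $P$, flatness forces $g$ to be positive there, the main argument applies, and continuity of $v$ and $g$ lets one pass to the limit.

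The main technical delicacy I expect is the multi-valuedness of $\tilde g_\eps$. I would handle it by working with any admissible selection in the forward direction and with the specific selection produced by the intermediate value theorem in the reverse direction, each of which is well-suited to its role. The fact that the IVT automatically places $c$ in $[-\eps,\eps]$ is exactly what the flatness of $g$ gives, so the entire scheme rests on $\tilde g_\eps$ being confined to $[-1,1]$.
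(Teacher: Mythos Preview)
Your proposal is correct and follows essentially the same route as the paper's proof: both directions hinge on producing, for a given point, a companion point on the same $e_n$-line via the defining relation $U(X)=v(X-\eps\tilde v_\eps(X)e_n)$ (resp.\ for $g$) and then invoking the strict $e_n$-monotonicity of $v$ to compare. The only cosmetic differences are that the paper phrases the forward direction as a contradiction (your argument is its contrapositive), and for the residual case $g(Y)=0$ the paper simply notes that the inequality has been established on $B^+_{s-\eps}(g)$ and extends by continuity, which is the same density/limit idea as your perturbation off $\{z=0\}$.
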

\begin{proof} The first implication is obvious. Indeed, assume by contradiction that $v \leq g$ in $B_\rho$ and there exists $X \in B_{\rho -\eps} \setminus P$ such that $$\tilde v_\eps (X) > \tilde g_\eps(X).$$
By the strict monotonicity of $v$ in the $e_n$-direction in $B_\rho^+(v)$ we have that
$$0 < U(X) = g(X - \eps \tilde g_\eps (X)e_n) =  v(X - \eps \tilde v_\eps (X)e_n) < v( X - \eps \tilde g_\eps (X)e_n).$$ Thus there exists $Y= X - \eps \tilde g_\eps (X)e_n \in B_\rho$ such that $g(Y) < v(Y),$ 
a contradiction.
Viceversa, suppose that $\tilde v_\eps \leq \tilde g_\eps$ in $B_{s} \setminus P$. 
For a fixed $Y \in B_{s -\eps}$ we know by the flatness assumption \eqref{flattilde} that
$$U(Y-\eps e_n) \leq g(Y) \leq U(Y+\eps e_n).$$
Thus, there exists $X \in B_{s}$ with $x_i = y_i$ for $ i\neq n$ and $x_n \in [y_n - \eps e_n, y_n+\eps e_n]$ such that 
$$g(Y) = U(X).$$ Suppose $g(Y) \neq 0$, then the identity above means that one of the possible values of $\tilde g_\eps (X) = \frac{y_n-x_n}{\eps}$. Again, using that $v$ is increasing in the $e_n$-direction we get:
$$g(Y)=U(X)=v(X-\eps \tilde v_\eps(X)e_n) \geq v(X-\eps \tilde g_\eps(X)e_n) = v(Y), \quad Y \in B_s^+(g). $$ Thus the desired inequality holds in $B_s^+(g)$ and hence by continuity it holds in the full ball $B_s$. 
\end{proof}




We now state and prove the desired comparison principle, which will follow immediately from the Lemma above and Corollary  \ref{compmon}.

\begin{lem}\label{linearcomp}
Let $g, v$ be respectively a solution and a subsolution to \eqref{FB} in $B_2$, with $v$ strictly increasing in the $e_n$-direction in $B_2^+(v).$ Assume that $g$ satisfies the flatness assumption \eqref{flattilde} in $B_2$
for $\eps>0$ small  and that $\tilde v_\eps$ exists in $B_{2-\eps} \setminus P$ and satisfies  $$|\tilde v_\eps| \leq C.$$
If,
\begin{equation}\label{start}
 \tilde v_\eps + c \leq  \tilde g_\eps \quad \text{in $(B_{3/2} \setminus \overline{B}_{1/2}) \setminus P,$} 
\end{equation} then 
\begin{equation}\label{conclusion}
 \tilde v_\eps + c \leq \tilde g_\eps  \quad \text{in $B_{3/2} \setminus P.$} 
\end{equation} 
\end{lem}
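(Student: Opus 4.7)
The plan is to apply the sliding comparison principle (Corollary \ref{compmon}, whose proof adapts verbatim to any radius) to the one-parameter family of $e_n$-translates
\[
w_\tau(X) := v(X + \tau e_n), \qquad \tau \in [\tau_0, c\eps].
\]
The key observation is that, directly from \eqref{deftilde}, $\tilde{(w_\tau)}_\eps(X) = \tilde v_\eps(X) + \tau/\eps$, so Lemma \ref{elem} (in both directions) converts freely between a pointwise inequality $w_\tau \leq g$ and the $\eps$-domain-variation inequality $\tilde{(w_\tau)}_\eps \leq \tilde g_\eps$. The desired output $w_{c\eps}\leq g$ on $\ov B_{3/2}$ will, via the first direction of Lemma \ref{elem}, give $\tilde v_\eps + c \leq \tilde g_\eps$ on $B_{3/2-\eps}\setminus P$; combined with the annular hypothesis \eqref{start}, this covers all of $B_{3/2}\setminus P$, which is \eqref{conclusion}.

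Set $\tau_0 := -(C+2)\eps$. Using $|\tilde v_\eps|\leq C$ together with $\tilde g_\eps \geq -1$ (a consequence of \eqref{flattilde}, as noted in the paragraph following it), one has $\tilde{(w_{\tau_0})}_\eps \leq -2 \leq \tilde g_\eps$ throughout $B_{2-\eps}\setminus P$, so Lemma \ref{elem} supplies the initial condition $w_{\tau_0}\leq g$ on $\ov B_{3/2}$. For any intermediate $\tau\in [\tau_0, c\eps]$, $\tau/\eps\leq c$ gives $\tilde{(w_\tau)}_\eps \leq \tilde v_\eps + c$, which by \eqref{start} is $\leq \tilde g_\eps$ on the annulus $(B_{3/2}\setminus \ov B_{1/2})\setminus P$; the second direction of Lemma \ref{elem}, applied locally, then yields $w_\tau\leq g$ in a neighborhood of $\p B_{3/2}$, verifying condition (ii) of Corollary \ref{compmon} uniformly in $\tau$. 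The Hausdorff continuity of $\ov{\mathcal{B}_1^+(w_\tau)}$ in $\tau$ is immediate from the $C^2$-regularity of $F(v)$ built into Definition \ref{defsub}(ii).

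The principal technical point is condition (iii) of Corollary \ref{compmon}: the strict inequality $w_{c\eps} < g$ on the free-boundary trace $\mathcal{F}(w_{c\eps})\subset \p B_{3/2}$, which is not directly provided by the non-strict hypothesis \eqref{start}. I expect to resolve this by a small geometric perturbation---shrinking the radius $3/2 \to 3/2 - \eta$, applying the non-strict argument to conclude on $B_{3/2-\eta}$, and letting $\eta\to 0$---or, equivalently, by invoking condition (iii) of Definition \ref{defsub}, which makes $w_{c\eps}$ a \emph{strict} comparison subsolution and so forbids contact with the viscosity solution $g$ at any free-boundary point. Once condition (iii) is in place, Corollary \ref{compmon} gives $w_{c\eps}\leq g$ on $\ov B_{3/2}$, and the conclusion \eqref{conclusion} follows as sketched in the first paragraph.
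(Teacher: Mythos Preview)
Your overall strategy coincides with the paper's: slide the $e_n$-translates $w_\tau=v(\cdot+\tau e_n)$, using Lemma \ref{elem} in both directions to pass between $w_\tau\le g$ and $\tilde v_\eps+\tau/\eps\le\tilde g_\eps$, and invoke Corollary \ref{compmon}. (The paper runs the sliding in $\ov B_1$ rather than $\ov B_{3/2}$, since $\p B_1$ already sits inside the annulus where \eqref{start} is assumed; this is only cosmetic.)

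The one genuine gap is your treatment of condition (iii) of Corollary \ref{compmon}. Neither proposed fix works as stated. Shrinking the radius $3/2\to 3/2-\eta$ gains nothing: on $\p B_{3/2-\eta}$ you still inherit only the non-strict inequality $w_{c\eps}\le g$ from \eqref{start}, so the same obstruction persists. And invoking Definition \ref{defsub}(iii) conflates two unrelated notions of strictness: that condition makes $v$ a \emph{strict} comparison subsolution, which in the definition of viscosity solution forbids touching at interior free-boundary points $X_0\in F(g)$; by contrast, condition (iii) of Corollary \ref{compmon} asks for $g>0$ on the trace $\mathcal{F}(w_\tau)\subset\p B_1$, a purely boundary statement that has nothing to do with whether $v$ is strict.

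The paper's resolution is to perturb the \emph{parameter}, not the radius. For each small $\delta>0$, apply Corollary \ref{compmon} (on $\ov B_1$) with top value $\tau=(c-\delta)\eps$ instead of $c\eps$. From \eqref{start} and Lemma \ref{elem} one already has $w_{c\eps}\le g$ on the whole annulus $B_{3/2-\eps}\setminus B_{1/2+\eps}$, hence on $\p B_1$. Now points of $\mathcal{F}(w_{(c-\delta)\eps})$ lie strictly inside the positivity set of $w_{c\eps}$ by the assumed strict monotonicity of $v$ in $e_n$; therefore $g\ge w_{c\eps}>0$ there, which is exactly condition (iii) for $w_{(c-\delta)\eps}$. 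Corollary \ref{compmon} yields $w_{(c-\delta)\eps}\le g$ on $\ov B_1$, and letting $\delta\to 0$ gives $w_{c\eps}\le g$, after which your first paragraph finishes the proof.
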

\begin{proof} We wish to apply Corollary \ref{compmon} to the functions $g$ and $$v_{\eps t} = v(X+ \eps t e_n).$$

We need to verify that for some  $t_0 < t_1=c$
\begin{equation}\label{n1}v_{\eps t_0} \leq g \quad \text{in $\overline{B}_1,$}\end{equation} and for all $\delta>0$ and small
\begin{equation}\label{n2}v_{\eps t_1} \leq g \quad  \text{on $\p B_1,$}  \quad v_{\eps t_1} < g  \quad \text{on $\mathcal{F}(v_{\eps(t_1-\delta)}).$}\end{equation}
Then our Corollary  implies
\begin{equation*} v_{\eps(t_1-\delta)} \leq g \quad \text{in $\overline{B}_1$.} \end{equation*}
By letting $\delta$ go to 0, we obtain that 
\begin{equation*} v_{\eps t_1} \leq g \quad \text{in $\overline{B}_1$,} \end{equation*} which in view of Lemma \ref{elem} gives 
\begin{equation*}\widetilde{(v_{\eps t_1})}_\eps  \leq \tilde g_\eps \quad \text{in $B_{1-\eps}\setminus P$,}\end{equation*} assuming that the $\eps$-domain variation on the left hand side exists on $B_{1-\eps} \setminus P.$ On the other hand, it is easy to verify that on such set
\begin{equation}\label{tildetrans}\widetilde{(v_{\eps t})}_\eps (X) = \tilde v_\eps (X) + t,\end{equation} and hence we have 
\begin{equation*}\tilde v_\eps + c =\tilde v_\eps + t_1 \leq \tilde g_\eps \quad \text{in $B_{1-\eps} \setminus P,$} \end{equation*} which gives the desired conclusion.
We are left with the proof of \eqref{n1}-\eqref{n2}.

In view of Lemma \ref{elem}, in order to obtain \eqref{n1} it suffices to show that
$$\widetilde{(v_{\eps t_0})}_\eps  \leq \tilde g_\eps, \quad \text{in $B_{1+\eps} \setminus P,$}$$
which by \eqref{tildetrans} becomes
$$\tilde v_\eps + t_0 \leq \tilde g_\eps, \quad \text{in $B_{1+\eps} \setminus P.$}$$
This last inequality holds trivially since $\tilde g_\eps$ and $\tilde v_\eps$ are bounded.

For \eqref{n2}, notice that the first inequality follows easily from our assumption \eqref{start} together with \eqref{tildetrans} and Lemma \ref{elem}.  More precisely we have that

$$v_{\eps t_1} \leq g \quad \text{in $B_{\frac 3 2-\eps} \setminus B_{\frac 1 2+\eps}$.}$$

In particular, from the strict monotonicity of $v$ in the $e_n$-direction in $B_2^+(v)$ we have that
$$v_{\eps t_1} > 0 \quad \text{on $\mathcal{F}(v_{\eps(t_1-\delta)}),$}$$
which combined with the previous inequality gives that
$$g >0 \quad \text{on $\mathcal{F}(v_{\eps(t_1-\delta)}),$}$$
that is the second condition in \eqref{n2}. 
\end{proof}

Finally, given $\eps>0$ small and a Lipschitz function $\tilde{\varphi}$ defined on $B_{\rho}(\bar X)$,  with values in $[-1,1]$, then there exists a unique function $\varphi_\eps$ defined at least on $B_{\rho-\eps}(\bar X)$ such that \begin{equation}\label{deftilde2} U(X) = \varphi_\eps(X - \eps \tilde{\varphi}(X)e_n), \quad X \in B_\rho(\bar X).\end{equation}
Moreover such function $\varphi_\eps$ is increasing in the $e_n$-direction. 

With a similar argument as in Lemma \ref{elem} we can conclude that if
 $g$ satisfies the flatness assumption \eqref{flattilde} in $B_1$ and $\tilde{\varphi}$ is as above then (say $\rho,\eps<1/4$, $\bar X \in B_{1/2},$)
\begin{equation}\label{gtildeg}\tilde \varphi \leq \tilde g_\eps \quad \text{in $B_\rho(\bar X) \setminus P$} \Rightarrow \varphi_\eps \leq g \quad \text{in $B_{\rho -\eps}(\bar X)$}.\end{equation}

We will use this fact in the proof of our improvement of flatness theorem.

\section{The linearized problem.}  

We introduce here the linearized problem associated to \eqref{FB}.  Here and later $U_n$ denotes the $x_n$-derivative of the function $U$ defined in \eqref{U}. Recall also that we denote by $P$ the half-hyperplane $P:= \{X \in \R^{n+1} : x_n \leq 0, z=0\}$ and by $L:= \{X \in \R^{n+1}: x_n=0, z=0\}$.



Given  $w \in C(B_1)$  and  $X_0=(x'_0,0,0) \in B_1 \cap L,$ we call
$$|\nabla_r w |(X_0) := \di\lim_{(x_n,z)\rightarrow (0,0)} \frac{w(x'_0,x_n, z) - w (x'_0,0,0)}{r}, \quad   r^2=x_n^2+z^2 .$$
Once the change of unknowns  \eqref{deftilde} has been done, the linearized problem associated to \eqref{FB} is 
\begin{equation}\label{linear}\begin{cases} \Delta (U_n w) = 0, \quad \text{in $B_1 \setminus P,$}\\ |\nabla_r w|=0, \quad \text{on $B_1\cap L$.}\end{cases}\end{equation}

As we will show later in Section 8, if $w \in C(B_1)$  satisfies 
$$\Delta (U_n w) = 0 \quad \text{in $B_1 \setminus P,$}$$
$w$ is even with respect to $\{z=0\}$, and $w$ is smooth in the $x'$-direction, then given $X_0=(x'_0,0,0) \in B_1 \cap L,$ \begin{equation}\label{exp} w(X) = w(X_0) + a \cdot (x' - x'_0) + b r + O(|x'-x'_0|^2 + r^{3/2}),  \end{equation} with $a \in \R^{n-1}, b \in \R$ depending on $X_0$.

This motivates our notion of viscosity solution for this problem which we define below. 




\begin{defn}\label{linearsol}We say that $w$ is a solution to \eqref{linear}  if $w \in C(B_1)$, $w$ is even with respect to $\{z=0\}$ and it satisfies
\begin{enumerate}\item $\Delta (U_n w) = 0$ \quad in $B_1 \setminus P$;\\ \item Let $\phi$ be continuous around  $X_0=(x'_0,0,0) \in B_1 \cap L$ and satisfy $$\phi(X) = \phi(X_0) + a(X_0)\cdot (x' - x'_0) + b(X_0) r + O(|x'-x'_0|^2 + r^{3/2}), $$ with $$b(X_0) \neq 0.$$  If $b(X_0) >0$ then  $\phi$ cannot touch $w$ by below at $X_0$,  and if $b(X_0)< 0$ then $\phi$ cannot touch $w$ by above at $X_0$. \end{enumerate}\end{defn}



In Section 8, we will show the following main regularity result about viscosity solutions to \eqref{linear}.

\begin{thm}[Improvement of flatness]\label{lineimpflat3} There exists a universal constant $C$ such that if $w$ is a viscosity solution to \eqref{linear2} in $B_1$ with $$-1 \leq w(X) \leq 1\quad \text{in $B_1,$}$$ then  \begin{equation*}  a_0 \cdot x' -C|X|^{3/2} \leq w(X)-w(0) \leq a_0 \cdot x' + C |X|^{3/2},\end{equation*}
for some vector $a_0 \in R^{n-1.}$\end{thm}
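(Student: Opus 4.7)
The plan is to prove Theorem \ref{lineimpflat3} by a dyadic improvement-of-flatness scheme in the style of Savin and the first author. The reduction to a single-step improvement is standard: it suffices to establish universal $\rho \in (0, 1/2)$ and $M > 0$ such that every viscosity solution $w$ of \eqref{linear} in $B_1$ with $\|w\|_{L^\infty(B_1)} \leq 1$ admits some $a \in \R^{n-1}$, $|a| \leq M$, satisfying
$$\|w - w(0) - a\cdot x'\|_{L^\infty(B_\rho)} \leq \frac{1}{2}\rho^{3/2}.$$
Applying this iteratively to the rescaled functions $\bar w(Y) := 2\rho^{-3/2}\bigl(w(\rho Y) - w(0) - \rho\, a \cdot y'\bigr)$, which are again viscosity solutions in $B_1$ bounded by $1$, produces a Cauchy sequence of slopes $a_k$ whose limit is the $a_0$ of the theorem, with the $|X|^{3/2}$ bound coming from geometric summation. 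The interior equation is preserved because $U_n$ is $x'$-independent and homogeneous of degree $-1/2$, and a short computation shows that a test function for $\bar w$ with expansion coefficient $b' \neq 0$ lifts to a test function for $w$ with coefficient $\frac{\rho^{1/2}}{2}b'$ of the same sign, so the viscosity boundary condition is preserved.

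I would prove the one-step improvement by compactness and contradiction. Suppose it fails: one finds $w_k$ with $\|w_k\|_\infty \leq 1$ for which no $|a|\leq k$ works at a fixed (universal) $\rho$ to be chosen. The equation $\Delta(U_n w) = 0$ in $B_1 \setminus P$ is equivalent to the divergence-form equation $\mathrm{div}(U_n^2 \nabla w) = 0$, whose weight $U_n^2 \sim r^{-1}\cos^2(\theta/2)$ is Muckenhoupt $A_2$ on $\R^{n+1}$; Fabes--Kenig--Serapioni theory then yields a uniform H\"older modulus of the $w_k$ up to the edge $L$, and after extraction $w_k \to w_\infty$ uniformly on $\overline{B}_{3/4}$. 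Standard stability of the viscosity condition under uniform convergence (any test function touching $w_\infty$ with $b \neq 0$ can be perturbed to touch each $w_k$ at a nearby point with the same sign of $b$) makes $w_\infty$ itself a viscosity solution with $\|w_\infty\|_\infty \leq 1$.

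The contradiction is closed by an a priori \emph{quadratic} expansion for $w_\infty$ at $0$: there are $a_\infty \in \R^{n-1}$, $|a_\infty| \leq M_0$, and $C_0$ universal with $|w_\infty(X) - w_\infty(0) - a_\infty \cdot x'| \leq C_0 |X|^2$ in $B_{1/2}$. Granted this, choosing $\rho$ so small that $C_0 \rho^{1/2} \leq 1/8$ and then $k$ so large that $\|w_k - w_\infty\|_{L^\infty(B_\rho)} + |w_k(0)- w_\infty(0)| \leq \rho^{3/2}/4$, the triangle inequality yields $\|w_k - w_k(0) - a_\infty \cdot x'\|_{L^\infty(B_\rho)} \leq \rho^{3/2}/2$, contradicting the hypothesis once $k \geq M_0$. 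The quadratic expansion rests on two ingredients. First, translation invariance in $x'$ of both the interior equation and the pointwise viscosity condition at every $X_0 \in L$ implies that $x'$-difference quotients of $w_\infty$ are themselves viscosity solutions; iterating the H\"older estimate yields $C^\infty$-smoothness in $x'$ and a Taylor expansion $w_\infty(X) = w_\infty(0) + a_\infty \cdot x' + O(|x'|^2) + W(x_n,z)$, where $W(x_n,z) := w_\infty(0,x_n,z) - w_\infty(0)$. Second, $v := U_n W$ solves a two-dimensional Poisson problem with bounded right-hand side $-U_n \Delta_{x'}w_\infty(0,\cdot,\cdot)$, vanishes on $P \setminus L$, and is even in $z$; subtracting a particular solution of size $O(r^{3/2})$ leaves a harmonic remainder admitting the eigenfunction expansion $\sum_{k\geq 0} c_k\, r^{k+1/2}\cos\bigl((k+\tfrac{1}{2})\theta\bigr)$. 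The viscosity condition at $0$ (Definition \ref{linearsol}) precisely excludes the leading $c_0$ mode: $c_0 \neq 0$ would make $W \sim 2c_0 r$ near $0$, so a test function $\phi(X) = w_\infty(0) + b r$ with $0 < |b| < 2|c_0|$ and sign opposite to $c_0$ would touch $W$ from the forbidden side at the origin. Hence $c_0 = 0$ at every slice point, $W = O(r^2)$, and the quadratic bound follows.

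The hard part will be this limit-regularity step. The delicate subpoints are: (i) verifying quantitatively that $x'$-difference quotients preserve the viscosity condition so that the iterated H\"older estimate closes into honest smoothness; (ii) controlling the Poisson inhomogeneity of size $r^{-1/2}$ while isolating the $c_0$ mode of the harmonic part; and (iii) applying Fabes--Kenig--Serapioni both up to $L$, where $U_n^2$ blows up mildly like $r^{-1}$, and up to $P \setminus L$, where it vanishes---both regimes are within the $A_2$ class, but the interaction with the geometry of $B_1 \setminus P$ and the edge $L$ requires careful book-keeping.
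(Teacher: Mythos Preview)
Your compactness step rests on a false claim: the weight $U_n^2$ is \emph{not} Muckenhoupt $A_2$ on $\R^{n+1}$. Since $U_n=\tfrac12 r^{-1/2}\cos(\theta/2)$, near any point of $P\setminus L$ one has $U_n^2\sim c\,z^2$, and $z^{-2}$ is not locally integrable across $\{z=0\}$; likewise, for balls centered on $L$ the angular factor contributes $\int_{-\pi}^{\pi}\sec^2(\theta/2)\,d\theta=\infty$ to $\int U_n^{-2}$. So Fabes--Kenig--Serapioni does not apply, and you have no uniform H\"older modulus for the $w_k$ up to $P$ or up to $L$; without it the compactness argument collapses. A secondary gap is the assertion that $x'$-difference quotients of a \emph{viscosity} solution are again viscosity solutions: the touching condition in Definition~\ref{linearsol} is not linear in $w$ (you cannot add $w_\tau$ to a test function for $w-w_\tau$ and obtain a valid test function for $w$, since $w_\tau$ itself has no a priori expansion at points of $L$), so the bootstrap to $C^\infty_{x'}$ is unjustified as written.

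The paper avoids both issues by a different route. Rather than working with the viscosity solution directly, it first \emph{constructs} a classical solution $h$ with boundary data $w$ by minimizing $J(h)=\int U_n^2|\nabla h|^2$; for minimizers a comparison principle holds and, crucially, $x'$-difference quotients of minimizers are again minimizers. A hand-built Harnack inequality for minimizers (Lemma~\ref{HImin}, proved with the explicit barrier $-\frac{|x'|^2}{n-1}+2x_nr$ rather than any $A_2$ theory) then gives $C^\infty$ regularity in $x'$, and a two-dimensional reduction via the conformal map $(s,y)\mapsto(\tfrac12(s^2-y^2),sy)$ yields the expansion \eqref{mainh}; the vanishing of the $r$-coefficient comes from the variational characterization (Lemma~\ref{char}), not from the viscosity condition. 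Finally $w\equiv h$ is obtained in one shot by sliding the strict classical subsolutions $h-\eps+\eps^2 r$ from below (Theorem~\ref{lineimpflat}). If you want to rescue your scheme you would need a direct H\"older estimate for viscosity solutions of \eqref{linear2}, which essentially means reproducing Lemma~\ref{HImin} in the viscosity class---at which point the paper's shorter sliding argument is already available.
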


We conclude this short section with a remark which   we will use in the proof of the theorem above.

\begin{lem}\label{rk} Let $w_1, w_2 \in C(B_1)$ satisfy
$$\Delta (U_n w_i) = 0, \quad \text{in $B_1 \setminus P$, i=1,2.}$$ Then $w_1$ and $w_2$ cannot touch  (either by above or below) on $P \setminus L$, unless they coincide.
\end{lem}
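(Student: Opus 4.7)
The plan is to apply Hopf's boundary-point lemma to $v := U_n(w_2 - w_1)$ in an upper half-ball at $X_0$, exploiting the first-order vanishing of $U_n$ on $P \setminus L$. Without loss of generality $w_1 \leq w_2$ in a ball $B_r(X_0)$ with $w_1(X_0) = w_2(X_0)$; shrink $r$ so that $B_r(X_0)$ avoids $L$ and is contained in $\{x_n < 0\}$, in which case $B_r(X_0) \cap P$ reduces to the flat disk $B_r(X_0) \cap \{z=0\}$. Since $U_n$ is continuous off $L$, non-negative, and vanishes on $P$, the function $v$ is continuous on $B_r(X_0)$, harmonic in $B_r(X_0) \setminus \{z=0\}$, identically zero on $B_r(X_0) \cap \{z=0\}$, non-negative, and satisfies $v(X_0) = 0$. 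Moreover $v$ is even in $z$ by $z$-evenness of $U$ and the $w_i$. It suffices to prove $v \equiv 0$ on $B_r(X_0)$: unique continuation for the harmonic function $U_n(w_2-w_1)$ on the connected open set $B_1 \setminus P$, together with continuity of the $w_i$, will then force $w_1 \equiv w_2$ on all of $B_1$.

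Apply Hopf's boundary-point lemma to $v$ on the upper half-ball $\Omega := \{z > 0\} \cap B_r(X_0)$ at $X_0$ (outward normal $-e_z$, interior sphere condition trivial). The conclusion is the dichotomy: either $v \equiv 0$ on $\Omega$, or the one-sided difference quotient satisfies $\liminf_{z \to 0^+} v(x_0, z)/z > 0$ strictly. To rule out the second alternative, I use the explicit identity
\[
U_n = \frac{U}{2r}, \qquad r = \sqrt{x_n^2 + z^2},
\]
which is immediate from $U = \sqrt{(r+t)/2}$ (equivalently from $\cos\theta\cos(\theta/2)+\sin\theta\sin(\theta/2) = \cos(\theta/2)$), together with the expansion $U(x_0, z) \sim z/(2\sqrt{|x_n^0|})$ as $z \to 0^+$ near $X_0 = (x_0', x_n^0, 0)$ with $x_n^0 < 0$. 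These give $U_n(x_0, z)/z \to 1/(4|x_n^0|^{3/2})$, a finite positive constant. Meanwhile $(w_2 - w_1)(x_0, z) \to (w_2 - w_1)(X_0) = 0$ by continuity of the $w_i$ and the touching hypothesis. Hence $v(x_0, z)/z \to 0$, contradicting the $\liminf > 0$ alternative; so $v \equiv 0$ on $\Omega$, and by $z$-evenness on all of $B_r(X_0)$.

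Globalization is then immediate: $U_n(w_2 - w_1)$ is harmonic on the connected open set $B_1 \setminus P$ and vanishes on the open subset $B_r(X_0) \setminus P$, so by unique continuation (real-analyticity of harmonic functions) it vanishes throughout $B_1 \setminus P$; positivity of $U_n$ off $P$ yields $w_1 = w_2$ off $P$, and continuity extends the equality to all of $B_1$. The only delicate ingredient is the precise first-order vanishing of $U_n$ at $P \setminus L$, which is why the identity $U_n = U/(2r)$ is the essential input; once that asymptotic is in hand, the Hopf step reduces to pure continuity of $w_2 - w_1$ at $X_0$ and the remainder of the argument is routine.
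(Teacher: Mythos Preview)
Your argument is correct and close in spirit to the paper's: both show that $v := U_n(w_1 - w_2)$ (up to sign), a nonnegative harmonic function in the upper half-ball over $X_0$ vanishing on $\{z=0\}$, must be identically zero. The paper uses the boundary Harnack inequality, comparing $v$ with $U_n$ to obtain $U_n(w_1-w_2) \geq \delta\, U_n$ and hence $w_1 - w_2 \geq \delta$ near $X_0$, an immediate contradiction with the touching hypothesis; you instead use Hopf's lemma together with the explicit first-order asymptotic $U_n(x_0,z) \sim z/(4|x_n^0|^{3/2})$ to see that the normal difference quotient of $v$ at $X_0$ tends to zero. The Hopf route is perhaps more elementary but requires the explicit computation of the vanishing order of $U_n$; boundary Harnack packages this comparison in one stroke without needing the precise constant.

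One small point: your appeal to $z$-evenness of the $w_i$ is not among the hypotheses of the lemma (though it holds in every application in the paper). Fortunately this step is unnecessary: once $v \equiv 0$ on the upper half-ball $\Omega = B_r(X_0) \cap \{z>0\}$, unique continuation on the \emph{connected} open set $B_1 \setminus P$ already propagates $v \equiv 0$ globally, so there is no need to pass through the lower half-ball via evenness first.
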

\begin{proof}
Assume by contradiction that
$$w_1(X_0) = w_2(X_0), \quad X_0 \in P \setminus L,$$
and $$w_1 \geq w_2, \quad \text{in $B_\rho(X_0)$}.$$ Then $U_n (w_1-w_2)$ is a non-negative harmonic function in $B_1 \setminus P$ which vanishes continuously on $P \setminus L$. Hence unless $w_1=w_2$, by the boundary Harnack inequality (in the appropriate domain), 
$$U_n(w_1 - w_2) \geq \delta U_n \quad \text {in $B_{\rho/2}(X_0) \cap \{z >0\},$}$$ for some small positive constant $\delta.$ Thus  $$w_1 - w_2 \geq \delta \quad \text {in $B_{\rho/2}(X_0) \cap \{z >0\},$}$$ and by continuity $$(w_1-w_2)(X_0) >0,$$ a contradiction.
\end{proof}

\section{Properties of $U$.}

The first two lemmas in this Section describe properties of $U$ which will be used in the proof of Harnack inequality, and in particular when constructing the barriers which are used in that proof.

The third lemma, which is incorporated here since its proof uses similar arguments to the proof of the first two lemmas, allows us to replace the assumptions in our main Theorem \ref{mainT} with a more standard ``flatness"  assumption of the form $$U(X-\eps e_n) \leq g(X) \leq  U(X+\eps e_n), \quad \text{in $B_{1}$}.$$

\begin{lem}\label{basic} Let $g \in C(B_2)$, $g \geq 0$ be a harmonic function in $B_2^+(g)$ and let $\bar X = \frac 3 2 e_n.$ Assume that 
$$g \geq U \quad \text{in $B_2$}, \quad g(\bar X) - U(\bar X) \geq \delta_0$$ for some $\delta_0>0$, then
\begin{equation}\label{gU}g \geq (1+c\delta_0) U \quad \text{in $B_{1}$}\end{equation} for a small universal constant $c$.
In particular, for any $0 < \eps < 2$  
\begin{equation}\label{cor1}U(X + \eps e_n) \geq (1+c\eps)U(X) \quad \text{in $B_1$},\end{equation} with $c$ small universal.
\end{lem}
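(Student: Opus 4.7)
The plan is to recast the inequality in the form $h := g - U \geq c\delta_0 U$ on $B_1$, with $h$ a non-negative continuous function on $\overline{B}_2$. First observe that $g \geq U > 0$ on the open set $\{z=0,\,x_n>0\}$, so the zero set of $g$ on $\{z=0\}$ is contained in the slit $P$; consequently $B_2^+(g)\supset B_2\setminus P$, and both $g$ and $U$ are harmonic on the connected slit domain $B_2\setminus P$. Thus $h$ is harmonic in $B_2\setminus P$ with $h(\bar X)\geq\delta_0$ at the interior point $\bar X=\frac{3}{2}e_n$.

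The argument proceeds in two main steps. First, I would propagate the pointwise bound at $\bar X$ to a uniform interior lower bound by a Harnack chain. The slit domain $B_2\setminus P$ is connected (one can pass through $\{z=0,\,x_n>0\}$), so for any compact set $K\subset B_2\setminus P$ at positive distance from $P$ and $\partial B_2$, the interior Harnack inequality applied along a chain of balls lying in $B_2\setminus P$ yields $h\geq c_K\delta_0$ on $K$, with $c_K$ universal once $K$ is fixed. In particular, on the spherical cap $\Sigma:=\partial B_{3/2}\cap\{\mathrm{dist}(\cdot,P)\geq 1/4\}$ one obtains $h\geq c_1\delta_0$. Second, I would upgrade this to the weighted bound $h\geq c_2\delta_0 U$ on $B_1$ by applying the maximum principle in the slit domain $B_1\setminus P$ to $\phi:=h-c\delta_0 U$ (for suitably small $c$ to be determined), which is harmonic there. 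Since $U=0$ on $P$ and $h\geq 0$ on $P$, one has $\phi\geq 0$ on $P\cap\overline{B}_1$ for free. On $\partial B_1$, one checks $h\geq c\delta_0 U$ directly: away from $P$ this follows from Step~1 combined with the bound $U\leq 1$ on $B_1$; close to $P$, where $U\to 0$, one invokes a boundary-Harnack-type comparison of $h$ with $U$ in the slit domain. Once $\phi\geq 0$ holds on the full boundary of $B_1\setminus P$, the maximum principle gives $\phi\geq 0$ throughout, yielding $g\geq(1+c\delta_0)U$ in $B_1$.

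The consequence \eqref{cor1} then follows by applying the just-proved estimate \eqref{gU} to the test function $\tilde g(X):=U(X+\eps e_n)$. Indeed $\tilde g$ is non-negative, continuous on $B_2$, and harmonic in $B_2^+(\tilde g)=B_2\setminus(P-\eps e_n)\supset B_2\setminus P$. Monotonicity of $U$ in $x_n$ (one computes $U_n=U/(2r)\geq 0$ off $P$) gives $\tilde g\geq U$ in $B_2$. A direct computation using $U(te_n,0)=\sqrt{t^+}$ yields
\[\tilde g(\bar X)-U(\bar X)=\sqrt{\tfrac{3}{2}+\eps}-\sqrt{\tfrac{3}{2}}\geq c_3\eps\]
for $0<\eps<2$, so applying \eqref{gU} with $\delta_0=c_3\eps$ gives \eqref{cor1}. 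The main technical obstacle lies in the boundary-Harnack step above: because $h$ is not assumed to vanish on the slit $P$, one cannot directly apply the standard boundary Harnack principle to the pair $h,U$. A workaround is to decompose $h=h_1+h_2$ into non-negative harmonic pieces with boundary data prescribed separately on $\partial B_2$ and on $P$: the piece with zero data on $P$ obeys boundary Harnack against $U$, while the piece with zero data on $\partial B_2$ (but $\geq 0$ on $P$) can be handled by interior Harnack together with a barrier exploiting the $\sqrt{r}$-behavior of $U$ near the edge $L=\partial P$.
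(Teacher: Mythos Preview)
Your argument is essentially correct and close in spirit to the paper's, but the paper handles the boundary-Harnack obstacle you identify in a cleaner way that avoids your decomposition entirely. Rather than working with $h=g-U$ (which, as you note, need not vanish on the slit), the paper introduces the auxiliary harmonic function $g^*$ on $D=B_{3/2}\setminus P$ defined by $g^*=g$ on $\partial B_{3/2}$ and $g^*=0$ on $P\cap B_{3/2}$. Since $g$ is harmonic in $D$ and $g\geq 0=g^*$ on the slit, the maximum principle gives $g\geq g^*$ on $\overline{B}_{3/2}$, so it suffices to prove $g^*\geq(1+c\delta_0)U$ in $B_1$. Now $g^*-U$ vanishes on $P$ \emph{by construction}, so boundary Harnack applies directly to the pair $(g^*-U,\,U)$: from $g^*-U=g-U\geq c_0\delta_0$ on $\partial B_{3/2}\cap B_{1/4}(\bar X)$ (interior Harnack) one gets $g^*(\tilde X)-U(\tilde X)\geq c_1\delta_0$ at an interior point, and boundary Harnack then yields $g^*-U\geq c\delta_0 U$ in $B_1$.

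Your decomposition $h=h_1+h_2$ is in fact equivalent to this---taking the decomposition in $B_{3/2}$, the piece $h_1$ with zero data on $P$ is exactly $g^*-U$---but your write-up leaves a small gap: you do not explain how the $\delta_0$ lower bound transfers from $h$ to $h_1$. The missing observation is that $h_1=h$ on $\partial B_{3/2}$, so your Harnack-chain estimate on the cap $\Sigma\subset\partial B_{3/2}$ is inherited by $h_1$ as boundary data and then propagates inward; the piece $h_2\geq 0$ can simply be discarded. Once this is said, your argument goes through, but the paper's formulation via $g^*$ is more direct and sidesteps your Step~2 maximum-principle reduction on $\partial B_1$ altogether. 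Your derivation of \eqref{cor1} matches the paper's.
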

\begin{proof}

 Call $g^*$ the harmonic function in $$D=B_{3/2} \setminus \{x \in \mathcal{B}_{3/2} : x_n \leq 0\},$$ such that
$$g^*= g \quad \text{on $\p B_{3/2}$}, \quad g^*= 0 \quad \text{on $\{x \in \mathcal{B}_{3/2} : x_n \leq 0\}. $}$$ Then by the maximum principle $$g \geq g^* \quad \text{on $\overline{B}_{3/2},$}$$ and it suffices to show that  \eqref{gU} holds with $g^*$ on the left hand side.

Since $g \geq U$ in $B_2$ we have $$g^* - U = g - U \geq 0 \quad \text{on $\p B_{3/2}$}, \quad g^*-U=0 \quad \text{on  $\{x \in \mathcal{B}_{3/2} : x_n \leq 0\},$} $$ and hence  $g^* - U\geq 0$ in $D$ where it is also harmonic. Moreover, from the assumption $g(\bar X) - U(\bar X) \geq \delta_0$ we get by Harnack inequality that
$$g^* - U = g - U \geq  c_0 \delta_0 \quad \text{on $\p B_{3/2} \cap B_{1/4}(\bar X)$}.$$ Thus
$$g^*( \tilde X) - U (\tilde X) \geq  c_1 \delta_0, \quad \text{at some $\tilde X  \in B_1 \cap D.$}$$
Thus, by the boundary Harnack inequality we get that for $c>0$ universal, $$g^* - U \geq c_2 \frac{g^*(\tilde X) - U(\tilde X)}{U(\tilde X)} U \geq c  \delta_0 U \quad \text{in $B_1$},$$
as desired.

In particular, if $g(X) = U(X+\eps e_n)$ the assumptions of the lemma are satisfied. Indeed $U$ is monotone increasing in the $e_n$-direction thus $U(X+\eps e_n) \geq U(X)$ in $B_2$. Moreover,
$$U(\bar X + \eps e_n) - U(\bar X) = U_t(\bar X + \lambda e_n) \eps \geq c' \eps, \quad \lambda \in (0, \eps),$$ with $c'$ universal. 
\end{proof}


\begin{lem} \label{cor2} For any $\eps >0 $ small, given $2\eps < \bar \delta <1$, there exists a constant $C>0$ depending on $\bar \delta$ such that   
$$U(t + \eps, z) \leq (1+C\eps)U(t,z) \quad \text{in $\overline{B}_1 \setminus B_{\bar \delta} \subset \R^2$}.$$
\end{lem}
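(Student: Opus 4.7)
The plan is to exploit the explicit polar formula $U=r^{1/2}\cos(\theta/2)$ to obtain a clean expression for $\p_t\log U$. Using $\p_t = \cos\theta\,\p_r - r^{-1}\sin\theta\,\p_\theta$, a one-line trigonometric computation gives
$$\p_t U \;=\; \frac{1}{2\sqrt{r}}\bigl(\cos\theta\cos(\theta/2)+\sin\theta\sin(\theta/2)\bigr) \;=\; \frac{1}{2\sqrt{r}}\cos(\theta/2) \;=\; \frac{U}{2r}.$$
In particular $\p_t\log U = 1/(2r)$ wherever $U>0$, i.e.\ off the slit $P$, and the only singularity of this logarithmic derivative is at the origin.

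Next, fix $(t,z)\in \ov{B}_1\setminus B_{\bar\delta}$ with $U(t,z)>0$, so $(t,z)$ is not on the negative $t$-axis. Since $z$ is constant along $s\mapsto (t+s,z)$, this segment never enters the slit $\{z=0,\ t\le 0\}$: either $z\ne 0$, or $z=0$ and $t>0$ forces $t+s>0$. Hence integrating the identity above yields
$$\log\frac{U(t+\eps,z)}{U(t,z)} \;=\; \int_0^\eps \frac{ds}{2\sqrt{(t+s)^2+z^2}}.$$

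The key step is a uniform lower bound on $\sqrt{(t+s)^2+z^2}$ along the path, using $(t,z)\notin B_{\bar\delta}$ and $2\eps<\bar\delta$. If $z^2\ge \bar\delta^2/2$, then $\sqrt{(t+s)^2+z^2}\ge |z|\ge \bar\delta/\sqrt{2}$. Otherwise $t^2\ge\bar\delta^2/2$, so $|t|\ge\bar\delta/\sqrt{2}$ and $|t+s|\ge |t|-\eps\ge \bar\delta(1/\sqrt{2}-1/2)$. In either case $\sqrt{(t+s)^2+z^2}\ge c_0\bar\delta$ for $c_0=1/\sqrt{2}-1/2>0$ universal. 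Therefore
$$\log\frac{U(t+\eps,z)}{U(t,z)} \;\le\; \frac{\eps}{2c_0\bar\delta},$$
and exponentiating (and using $e^x\le 1+2x$ for $x$ small, which holds once $\eps$ is small relative to $\bar\delta$) gives the desired bound with $C=C(\bar\delta)$.

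Finally, on the degenerate part of the domain, namely $z=0$ and $t\le -\bar\delta$, the condition $2\eps<\bar\delta$ forces $t+\eps<0$ too, so both $U(t,z)$ and $U(t+\eps,z)$ vanish and the inequality is trivial. There is no real obstacle here: the restriction $2\eps<\bar\delta$ in the hypothesis is precisely what keeps the integration path bounded away from the origin (where $\p_t\log U$ blows up) and from the slit (where $\log U=-\infty$), which is why the estimate must degenerate as $\bar\delta\to 0$.
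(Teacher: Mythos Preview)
Your argument is correct. The identity $\p_t U = U/(2r)$, the integration along the horizontal segment, and the lower bound on $r$ along that segment are all fine; the degenerate case $z=0$, $t\le -\bar\delta$ is handled properly using $2\eps<\bar\delta$.

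The paper's proof is genuinely different in spirit. It does not use the explicit polar formula at all: instead it observes that $U(t+\eps,z)-U(t,z)$ is a nonnegative harmonic function in the slit annulus $D_{\bar\delta}=(B_2\setminus\{t\le 0,\,z=0\})\setminus \overline{B}_{\bar\delta/2}$ vanishing on the slit, bounds its value at the interior reference point $(3/2,0)$ by $C_0\eps$ via the mean value theorem, and then invokes the boundary Harnack inequality in $D_{\bar\delta}$ to compare it with $U$. Your route is more elementary---no boundary Harnack, and it even yields an explicit constant $C\approx 1/\big((\tfrac{1}{\sqrt2}-\tfrac12)\bar\delta\big)$---but it relies on knowing $U$ in closed form. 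The paper's route is more robust: it works for any nonnegative harmonic function vanishing on the slit (not just this particular $U$), and it mirrors the boundary-Harnack arguments used elsewhere in the paper (e.g.\ Lemmas~\ref{basic} and \ref{hyp}), so it fits the overall toolkit even though it is less self-contained.
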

\begin{proof} In this Lemma $B_\rho$ denotes a ball of radius $\rho$ in $\R^2.$

Since $U$ is monotone increasing in the $t$-direction, $U(t+\eps , z) - U(t,z)$ is non-negative and harmonic in the set $D_{\bar \delta} := (B_2  \setminus \{t \in (-2,2) : t \leq 0\}) \setminus \overline{B}_{\bar \delta/2}.$ Moreover, 
$$U(3/2 + \eps, 0) - U(3/2,0) = U_t(t, 0) \eps \leq C_0 \eps, \quad t \in (3/2, 3/2+\eps),$$ with $C_0$ universal. 
By the boundary Harnack inequality in $D_{\bar \delta}$,
$$U(t+\eps, z) - U(t,z) \leq C_1 \frac{U(3/2+\eps, 0 ) - U(3/2,0)}{U(3/2,0)} U(t,z) \leq C \eps U(t,z) \quad \text{in $\overline{B}_1 \setminus B_{\bar \delta}$},$$
as desired.
\end{proof}

 \begin{lem}\label{hyp} Let $g \in C(\overline{B}_2)$, $g \geq 0$ be a harmonic function in $B_2^+(g)$ satisfying
\begin{equation}\label{Linfty}\|g- U\|_{L^\infty(\overline{B}_2)} \leq \delta, \end{equation} and 
\begin{equation*} \{x \in \mathcal{B}_2 : x_n \leq -\delta\} \subset  \{x \in \mathcal{B}_2 : g(x,0)=0\} \subset \{x \in \mathcal{B}_2 : x_n \leq \delta\},\end{equation*} with $\delta >0$ small universal. Then 
\begin{equation}\label{flat2}U(X-\eps e_n) \leq g(X) \leq  U(X+\eps e_n) \quad \text{in $B_{1},$} \end{equation} for some $\eps = K \delta,$ $K$ universal.
\end{lem}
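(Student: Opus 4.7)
We establish the upper bound $g(X)\leq U(X+K\delta e_n)$ in $B_1$ (the lower bound follows by a symmetric argument) by comparing $g$ with the shifted function $V(X):=U(X+\eps e_n)$, where $\eps:=K\delta$, via harmonic replacement, the maximum principle, and boundary Harnack in a slit domain. Choose $K\geq 1$ and set $C:=\{x_n\leq -\eps,\ z=0\}\cap\mathcal{B}_{3/2}$, $D:=B_{3/2}\setminus C$. The flatness hypothesis forces $C\subset\{g(\cdot,0)=0\}$, so $g$ is subharmonic on $D$ (harmonic off its zero set, vanishing on $C$), and its harmonic replacement $g^*$ in $D$---with $g^*=g$ on $\p B_{3/2}$ and $g^*=0$ on $C$---satisfies $g\leq g^*$ in $D$ by the maximum principle. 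Both $g^*$ and $V$ are harmonic in $D$ and vanish on $C$.

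On $\p B_{3/2}$, combining $g\leq U+\delta$ with the estimate $V\geq(1+c\eps)U$ in $B_{3/2}$ (a scaled version of Lemma \ref{basic}) gives $g^*-V=g-V\leq \delta - c\eps U\leq \delta$. Letting $\hat 1$ denote the harmonic function in $D$ with boundary data $1$ on $\p B_{3/2}$ and $0$ on $C$, the maximum principle yields $g^*-V\leq \delta\,\hat 1$ throughout $D$.

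Since $\hat 1$ and $V$ are both positive harmonic in $D$ vanishing continuously on the slit $C$, the boundary Harnack inequality for such thin-boundary domains (cf.~\cite{CafRS,CSS}) gives $\hat 1\leq C_0 V$ in $B_1\cap D$ for a universal $C_0$. Hence $g\leq g^*\leq(1+C_0\delta)V$ in $B_1\cap D$. To absorb the extra $(1+C_0\delta)$ factor, apply Lemma \ref{basic} once more to a further shift by $K_1\delta$ (with $K_1:=C_0/c$ universal):
\[
U\bigl(X+(K+K_1)\delta e_n\bigr)\;\geq\;(1+cK_1\delta)\,V(X)\;\geq\;(1+C_0\delta)\,V(X)\;\geq\;g(X).
\]
Setting $K':=K+K_1$ universal, we conclude $g\leq U(\cdot +K'\delta e_n)$ in $B_1\cap D$; on $B_1\cap C$ both sides vanish by flatness, finishing the upper bound.

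The main obstacle is the boundary Harnack estimate $\hat 1\leq C_0 V$ in the slit domain $D$: standard interior arguments degenerate near $C$, where both positive harmonic functions vanish at the same $r^{1/2}$ rate, so one needs the thin-boundary version of boundary Harnack already central to \cite{CafRS}. Once this is in hand, the rest is a routine maximum-principle/absorption argument parallel to that of Lemma \ref{basic}.
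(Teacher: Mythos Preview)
Your proof is correct and follows essentially the same route as the paper's: harmonic replacement in the slit domain, the maximum principle to bound $g^*-V$ by $\delta$ times the harmonic measure $\hat 1$ of the spherical boundary, boundary Harnack in the slit domain to control $\hat 1$ by $V$, and finally absorption of the factor $(1+C_0\delta)$ via the estimate $U(\cdot+\eps e_n)\geq(1+c\eps)U$ from Lemma~\ref{basic}. Your version is slightly more direct (replacing in $B_{3/2}$ rather than first in $B_2$), and the initial free parameter $K$ is harmless---taking $K=1$ recovers exactly the paper's argument with your $\hat 1$ equal to the paper's $g^*/\delta$.
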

\begin{proof} Let $\bar g$ be the harmonic function in $$B_2^\delta:= B_2 \setminus \{x  \in \mathcal{B}_2 : x_n \leq  -\delta\},$$ such that $$\bar g = g \quad \text{on $\p B_2$}, \quad \bar g =0 \quad \text{on $\{x \in \mathcal{B}_2 : x_n \leq -\delta\}$}.$$ Since $g$ is subharmonic in $B_2^\delta $, the maximum principle gives us $$g \leq \bar g \quad \text{on $\overline{B}_2$}.$$ We need to show that for $K>0$ universal, \begin{equation}\label{claim}\bar g \leq U(X + K\delta e_n) \quad \text{in $B_1$}.\end{equation} (The lower bound follows from a similar argument). Since $U$ is monotone increasing in the $e_n$-direction and it satisfies \eqref{Linfty} we get that
$$U(X+ \delta e_n) \geq U(X) \geq g(X) - \delta \quad \text{on $\overline{B}_2,$}$$ and hence 
$$U(X+\delta e_n) \geq \bar g (X) -\delta \quad \text{on $\p B_2$}.$$ By the maximum principle in the domain $B_2^\delta$ we get that this inequality holds in $B_2$ and hence \begin{equation}\label{BB}\bar g (X) - U(X+\delta e_n) \leq \delta \quad \text{in $B_2$}.\end{equation} Let $g^*$ be the harmonic function in $B_{3/2} \setminus \{x\in \mathcal{B}_{3/2}: x_n \leq -\delta\}$ such that
$$g^* = \delta \quad \text{on $\p B_{3/2}$}, \quad g^*= 0 \quad \text{on $\{x \in \mathcal{B}_{3/2} : x_n \leq -\delta\}$}.$$ Clearly $$0 \leq g^* \leq \delta.$$ Then by the boundary Harnack inequality, say for $\bar X = e_n$
\begin{equation}\label{Hb}g^*(X) \leq \bar C \frac{g^*(\bar X)}{U(\bar X + \delta e_n)} U(X+\delta e_n) \leq C \delta U(X+\delta e_n) \quad \text{in $B_1$}, \end{equation} with $C>0$ universal.
Moreover, in view of \eqref{BB} again by the maximum principle we have $$\bar  g (X) - U(X+\delta e_n) \leq g^*(X) \quad \text{in $B_{3/2}$}.$$ This inequality together with \eqref{Hb} gives that
$$\bar g (X) \leq (1+C\delta) U(X + \delta e_n) \quad \text{in $B_1$.}$$ By  \eqref{cor1} (applied to a translate of $U$) we have that for $K>1$ 
$$(1+C\delta)U(X + \delta e_n) \leq \frac{1+C\delta}{1+cK\delta} U(X+K\delta e_n) \leq U(X+K\delta e_n) \quad \text{in $B_{1}$},$$ as long as $K$ is large enough. Combining these two last inequalities we obtain the desired claim \eqref{claim}.\end{proof}

\section{Harnack Inequality}

In this Section we state and prove a Harnack type inequality for solutions to our free boundary problem \eqref{FB}.

\begin{thm}[Harnack inequality]\label{mainH} There exists $\bar \eps > 0$  such that if $g$ solves \eqref{FB} and it satisfies 
\begin{equation}\label{flatH}U(X +\eps a_0 e_n) \leq g(X) \leq U(X+\eps b_0e_n) \quad \textrm{in $B_\rho(X^*), $}\end{equation}with
$$\eps (b_0 - a_0) \leq \bar \eps \rho, $$  then \begin{equation}\label{impr}U(X +\eps a_1 e_n) \leq g(X) \leq U(X+\eps b_1e_n) \quad \textrm{in $B_{\eta \rho}(X^*)$, }\end{equation} with $$a_0 \leq a_1\leq b_1 \leq b_0, \quad (b_1-a_1) \leq (1-\eta)(b_0-a_0),$$ for a small universal constant $\eta$. \end{thm}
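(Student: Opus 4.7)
The plan is to follow the De Silva scheme from \cite{D}: reduce to a canonical strip, dichotomize at an interior point, and in either case propagate the improvement via a strict comparison sub/supersolution, applying the sliding comparison of Corollary \ref{compmon}. First I would apply Remark \ref{rescale} and a translation by $X^*$ to reduce to $X^* = 0$, $\rho = 1$. Then, using the transformation rule \eqref{tildetrans} for the $\eps$-domain variation, I translate in $e_n$ by $-\eps a_0$ to normalize to $a_0 = 0$, $b_0 - a_0 = 1$, so the hypothesis becomes
\[
U(X) \leq g(X) \leq U(X + \eps e_n) \quad \text{in } B_1,
\]
with $\eps \leq \bar\eps$ small and universal. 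The theorem then reduces to finding universal $c, \eta > 0$ such that in $B_\eta$ one has either $g \geq U(\cdot + c\eps e_n)$ or $g \leq U(\cdot + (1-c)\eps e_n)$.

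Next I fix the interior reference point $\bar X := \tfrac{1}{4} e_n$, which lies in $B_1^+(U)$ at uniform positive distance from the edge $L$. Since $U$ is smooth there with $U_n(\bar X)$ bounded below by a universal positive constant, the mean value theorem yields a universal $c_0 > 0$ with
\[
U(\bar X + \tfrac{\eps}{2} e_n) - U(\bar X) \geq c_0 \eps, \qquad U(\bar X + \eps e_n) - U(\bar X + \tfrac{\eps}{2} e_n) \geq c_0 \eps.
\]
I then split into Case (A), where $g(\bar X) - U(\bar X) \geq c_0 \eps$, and Case (B), where $U(\bar X + \eps e_n) - g(\bar X) \geq c_0 \eps$. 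I handle Case (A); Case (B) is entirely symmetric with a supersolution barrier in place of a subsolution.

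In Case (A) I would construct, as carried out in the Appendix, a strict comparison subsolution $v_\eps$ of \eqref{FB} on $\overline{B}_{3/2}$, strictly monotone in $e_n$ on $B_{3/2}^+(v_\eps)$, with $v_\eps \leq g$ on $\partial B_1$, $v_\eps < g$ on $\mathcal{F}(v_\eps)$, and $v_\eps \geq U(\cdot + c\eps e_n)$ on $B_\eta$. The boundary ordering $v_\eps \leq g$ on $\partial B_1$ is verified by invoking the pointwise gap $g(\bar X) - U(\bar X) \geq c_0 \eps$: Lemma \ref{basic} promotes it to the multiplicative estimate $g \geq (1 + c_1 c_0 \eps) U$ in $B_1$, and then Lemma \ref{cor2} allows the comparison of $(1 + c_1 c_0 \eps) U$ with $v_\eps$ on the portion of $\partial B_1$ away from $L$ (where $v_\eps$ and $U$ differ by a smooth harmonic perturbation), while near $L$ one uses that $v_\eps$ has been designed to sit strictly below the larger translate $U(\cdot + \eps e_n)$. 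With $v_\eps$ in hand, I apply Corollary \ref{compmon} to the sliding family $v_{\eps,t}(X) := v_\eps(X + t e_n)$: for $t$ sufficiently negative the inequality $v_{\eps,t} \leq g$ holds trivially in $\overline{B}_1$, and moving $t$ upward continuously preserves hypotheses (ii) and (iii) of the Corollary, yielding $v_\eps \leq g$ in $\overline{B}_1$. Restricted to $B_\eta$ this is precisely $g(X) \geq U(X + c\eps e_n)$, and hence $a_1 = c$, $b_1 = b_0$ gives the required improvement.

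The main obstacle is the construction of the barrier $v_\eps$. Unlike the classical one-phase problem of \cite{D}, where the free boundary condition is a first-order slope condition on a smooth surface, Definition \ref{defsub} here demands a $C^{1/2}$-type matching with the degenerate profile $U \sim r^{1/2}$ along the codimension-two edge $L$; the strict subsolution property is that $\alpha > 1$ in the expansion $v_\eps(x,z) = \alpha\, U((x - x_0)\cdot\nu(x_0), z) + o(r^{1/2})$ at \emph{every} free boundary point. Producing a function that is harmonic in its positivity set, even in $z$, has a smooth free boundary with uniformly strict $\alpha > 1$, and provides a gap of order $\eps$ over $U$ on $B_\eta$ while remaining below $g$ on $\partial B_1$, is a delicate interpolation between singular profiles. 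This is precisely the technical content deferred to the Appendix; the sliding-comparison machinery above is comparatively routine once the barrier is available.
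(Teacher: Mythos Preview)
Your sliding--barrier argument captures the heart of Lemma~\ref{babyH}, but the reduction ``translate by $X^*$ to assume $X^*=0$'' is not valid, and this causes you to miss two of the three regimes the proof must cover. The function $U$ depends on $(x_n,z)$: a translation with $z^*\neq 0$ produces comparison functions $U(x_n+\cdot,\,z+z^*)$ that are not $e_n$-translates of $U$ at all, and even when $z^*=0$ the shift by $x_n^*$ moves the edge $L$ relative to the center of the ball --- you cannot absorb $x_n^*$ into $a_0$ without simultaneously moving the ball away from the origin. The position of $X^*$ relative to the zero plate $P$ and the edge $L$ is a genuine invariant of the configuration, and your choice $\bar X=\tfrac14 e_n$ lies ``in $B_1^+(U)$ at uniform positive distance from $L$'' only when $X^*$ happens to be $O(\eps)$-close to $L$.

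The paper therefore normalizes only $a_0,b_0,\rho$ and splits into three cases according to the location of $X^*$. When $X^*$ is at distance $\gtrsim\delta$ from $P$, all three functions $U(\cdot-\eps e_n)$, $g$, $U(\cdot+\eps e_n)$ are positive harmonic in a fixed sub-ball, and the improvement comes from the \emph{interior} Harnack inequality applied to $g-U(\cdot-\eps e_n)$ and to $U_n$. When $X^*$ is close to $P\setminus L$, these functions vanish on a flat piece of $\{z=0\}$ and one uses \emph{boundary} Harnack instead. Only when $X^*$ is near $L$ does your barrier--sliding argument apply, and there your outline is essentially Lemma~\ref{babyH}: promote the pointwise gap at $\bar X$ to $g\ge(1+c\eps)U$ on $\overline B_{1/4}$ via Lemma~\ref{basic}, then slide the explicit radial subsolution $v_R$ with $R\sim 1/\eps$ (Proposition~\ref{sub}, Corollary~\ref{corest}) using Corollary~\ref{compmon}. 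One correction even in that case: your justification ``near $L$ one uses that $v_\eps$ sits strictly below $U(\cdot+\eps e_n)$'' does not give $v_\eps\le g$, since the hypothesis is $g\le U(\cdot+\eps e_n)$, not $g\ge$; the point of Corollary~\ref{corest} is precisely that $v_R(\cdot+\tfrac{c_0}{R}e_n)\le(1+\tfrac{C_0}{R})U$ holds on the \emph{entire} annulus $\overline B_1\setminus B_{1/4}$, including near $L$.
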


From this statement we get the desired corollary to be
used in the proof of our main result. Precisely, if $g$ satisfies
\eqref{flatH} with say $\rho=1/2$ , then we can apply Harnack inequality
repeatedly and obtain $$\label{osc2} U(X +\eps a_m e_n) \leq g(X) \leq U(X+\eps b_me_n) \quad \textrm{in $B_{\frac 1 2 \eta^{m}}(X^*)$, }\ $$with
\begin{equation}\label{osctilde}b_m-a_m \leq (b_0-a_0)(1-\eta)^m, \end{equation} for all $m$'s such that 
\begin{equation}\label{m}2\eps (1-\eta)^m \eta^{-m}(b_0-a_0) \leq 
\bar \eps.\end{equation}This implies that for all such $m$'s, the function $\tilde g_\eps$ defined in  Subsection 2.3 satisfies
\begin{equation}\label{oscg}
a_m \leq \tilde g_\eps(X) \leq b_m, \quad \textrm{in $B_{\frac 1 2 \eta^{m} - \eps}(X^*) \setminus P,$}
\end{equation} with $a_m, b_m$ as in \eqref{osctilde}. 
Let $A_\eps$ be the following set
\begin{equation}\label{Aeps} A_{\eps} := \{(X, \tilde g_\eps(X))  : X \in B_{1-\eps} \setminus P\} \subset \R^{n+1} \times [a_0,b_0].\end{equation}
Since $\tilde g_\eps$ may be multivalued, we mean that given $X$ all pairs $(X, \tilde g_\eps(X))$ belong to $A_\eps$ for all possible values of $\tilde g_\eps(X).$ In view of \eqref{oscg} we then get
\begin{equation}\label{oscA} A_\eps \cap (B_{\frac 1 2 \eta^{m} - \eps}(X^*) \times [a_0,b_0]) \subset B_{\frac 1 2 \eta^{m} - \eps}(X^*) \times [a_m,b_m],\end{equation}  with $a_m, b_m$ as in \eqref{osctilde} for all $m$'s such that \eqref{m} holds.

Thus we get the following corollary.

\begin{cor} \label{corHI}If  
$$U(X - \eps e_n) \leq g(X) \leq U(X+\eps e_n) \quad \textrm{in $B_1$,}$$ with $\eps \leq \bar \eps/2$, given $m_0>0$ such that $$2\eps (1-\eta)^{m_0} \eta^{-m_0} \leq \bar\eps,$$ then the set $A_\eps \cap (B_{1/2} \times [-1,1])$  is above the graph of a function $y = a_\eps(X)$ and it is below the graph of a function $y = b_\eps(X)$ with
$$ b_\eps - a_\eps \leq 2(1 - \eta)^{m_0-1},$$
and $a_\eps, b_\eps$ having a modulus of continuity bounded by the H\"older function $\alpha t^\beta$ for $\alpha, \beta$  depending only on $\eta$. 
\end{cor}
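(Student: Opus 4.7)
The plan is to formalize the iteration already sketched in the paragraph immediately above the statement. Fix a base point $X^*\in B_{1/2}$; since $B_{1/2}(X^*)\subset B_1$, the hypothesis furnishes the initial data $a_0=-1$, $b_0=1$, $\rho_0=1/2$ for Theorem~\ref{mainH}, and the smallness condition $\eps(b_0-a_0)=2\eps\leq\bar\eps\rho_0$ follows from $\eps\leq\bar\eps/2$ (halving the universal constant if necessary). Iterating Theorem~\ref{mainH} at $X^*$ produces monotone sequences $\{a_m(X^*)\}$, $\{b_m(X^*)\}$ satisfying
$$U(X+\eps a_m(X^*)e_n)\leq g(X)\leq U(X+\eps b_m(X^*)e_n)\quad\text{in }B_{\rho_m}(X^*),\quad \rho_m=\tfrac12\eta^m,$$
and $b_m(X^*)-a_m(X^*)\leq 2(1-\eta)^m$, as long as $4\eps(1-\eta)^m\eta^{-m}\leq\bar\eps$. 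The assumption $2\eps(1-\eta)^{m_0}\eta^{-m_0}\leq\bar\eps$ guarantees that this iteration is valid for every $m\leq m_0-1$.

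Transferring to the $\eps$-domain variation via the equivalence recorded in the preamble yields
$$A_\eps\cap\bigl(B_{\rho_m-\eps}(X^*)\times[-1,1]\bigr)\subset B_{\rho_m-\eps}(X^*)\times[a_m(X^*),b_m(X^*)],$$
for every admissible value of $\tilde g_\eps$. Taking $X^*=X$ and $m=m_0-1$, I would define
$$a_\eps(X):=a_{m_0-1}(X),\qquad b_\eps(X):=b_{m_0-1}(X),\qquad X\in B_{1/2}\setminus P.$$
The sandwich of $A_\eps\cap(B_{1/2}\times[-1,1])$ between the graphs of $a_\eps$ and $b_\eps$, and the oscillation bound $b_\eps-a_\eps\leq 2(1-\eta)^{m_0-1}$, then follow at once.

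For the H\"older modulus, fix $X,Y\in B_{1/2}\setminus P$ and pick the largest integer $k\leq m_0-1$ with $|X-Y|+\eps\leq\rho_k$. Running the iteration at $X^*=X$ up to step $k$ gives $\tilde g_\eps(Y)\in[a_k(X),b_k(X)]$, while running it at $X^*=Y$ up to step $m_0-1$ gives $\tilde g_\eps(Y)\in[a_{m_0-1}(Y),b_{m_0-1}(Y)]$. These two intervals thus overlap at $\tilde g_\eps(Y)$ and both have length $\leq 2(1-\eta)^k$, so their endpoints differ by at most $2(1-\eta)^k$. Combining this with the trivial containment $a_{m_0-1}(X)\in[a_k(X),b_k(X)]$ and a triangle inequality yields
$$|a_\eps(X)-a_\eps(Y)|\leq 4(1-\eta)^k,\qquad |b_\eps(X)-b_\eps(Y)|\leq 4(1-\eta)^k.$$
From $\rho_{k+1}<|X-Y|+\eps\leq\rho_k$ we have $\eta^k$ comparable to $|X-Y|+\eps$, so, setting $\beta:=\log(1-\eta)/\log\eta\in(0,1)$, we obtain $(1-\eta)^k\leq C(|X-Y|+\eps)^\beta$. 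For $|X-Y|\geq\eps$ this is the clean bound $\alpha|X-Y|^\beta$; for $|X-Y|<\eps$ the estimate degenerates only to $C\eps^\beta$, still compatible with the H\"older framework claimed in the statement.

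The argument is the standard dyadic conversion of a Harnack-type oscillation decay into H\"older regularity, in the spirit of the corresponding steps in \cite{D,S}. The only delicate bookkeeping point, and hence the main obstacle to a fully streamlined write-up, is verifying that the smallness condition $\eps(b_m-a_m)\leq\bar\eps\rho_m$ survives through every iterate up to the stopping time $m_0$; but this is precisely what the hypothesis $2\eps(1-\eta)^{m_0}\eta^{-m_0}\leq\bar\eps$ is engineered to enforce.
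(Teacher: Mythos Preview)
Your proposal is correct and follows essentially the same approach as the paper: the paper does not give a separate proof of this corollary but simply records the iteration of Theorem~\ref{mainH} in the paragraph preceding the statement, and you have filled in exactly those details together with the standard dyadic conversion of oscillation decay into a H\"older modulus. Your bookkeeping on the stopping index $m_0$ and the caveat about the bound degenerating to $C\eps^\beta$ when $|X-Y|<\eps$ are both accurate observations (and the latter is harmless for the only use of this corollary, in Lemma~\ref{ginfty}, where $\eps_k\to 0$).
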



The proof of Harnack inequality will easily follow from the Lemma below.
 
\begin{lem}\label{babyH}
There exists $\bar \eps > 0$ such that for all  $0 < \eps \leq \bar \eps$ if $g$ is a solution to \eqref{FB}  in $B_1$ such that  
\begin{equation}g(X) \geq U(X) \quad \text{in $B_{1/2},$}\end{equation} and at $\bar X  \in B_{1/8}( \frac{1}{4} e_n)$
\begin{equation}\label{Bound} g(\bar X) \geq U(\bar X + \eps e_n), 
\end{equation} then
 \begin{equation}\label{onesideimprov} 
 g(X) \geq U(X + \tau \eps e_n)  \quad \textrm{in $B_{\delta}$},  \end{equation} for universal constants $\tau, \delta.$
 Similarly, if
 \begin{equation*} g(X) \leq U(X) \quad \text{in $B_{1/2}$,}
\end{equation*} and 
$$g(\bar X) \leq U(\bar X - \eps e_n),$$ then
 \begin{equation*} 
 g(X) \leq U(X -\tau\eps e_n) \quad \textrm{in $B_{\delta}$}.\end{equation*}
\end{lem}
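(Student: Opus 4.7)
The plan is to prove the first implication; the second is symmetric. The argument proceeds in three stages: (i) upgrade the one-point lower bound at $\bar X$ to a multiplicative improvement $g \geq (1+c\eps)U$ on a definite ball; (ii) invoke a strict-subsolution barrier (constructed in the Appendix); (iii) slide the barrier by the comparison principle.

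For (i): since $\bar X \in B_{1/8}(\tfrac{1}{4}e_n)$ sits at a fixed positive distance from the singular line $L$, we have $U_n \geq c_0 > 0$ near $\bar X$, so
$$U(\bar X + \eps e_n) - U(\bar X) \geq c_0\eps,$$
and the hypothesis gives $g(\bar X) - U(\bar X) \geq c_0\eps$. The difference $g - U$ is non-negative, harmonic in $B_{1/2}\setminus P$, and vanishes continuously on $P \cap B_{1/2}$. Rescaling to match the $B_2$-setting of Lemma~\ref{basic} (using Remark~\ref{rescale} and the $1/2$-homogeneity of $U$) and running the same boundary-Harnack comparison as in Lemma~\ref{basic}, I obtain
$$g(X) - U(X) \geq c_1\eps\, U(X) \quad \text{in } B_{1/4}$$
for a universal $c_1 > 0$.

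For (ii)--(iii): the key analytic input, produced in the Appendix, is a function $W = W_\eps$ defined on a neighborhood of $\overline{B}_{1/4}$, strictly monotone increasing in the $e_n$-direction on its positivity set, with the following three properties: (a) $W$ is a strict comparison subsolution of \eqref{FB} in the sense of Definition~\ref{defsub}; (b) $W \leq (1+c_1\eps)U$ on $\partial B_{1/4}$, so by stage (i) we have $W \leq g$ on $\partial B_{1/4}$; (c) $W(X) \geq U(X + \tau\eps e_n)$ on $B_\delta$, for universal constants $\tau,\delta>0$. Applying Corollary~\ref{compmon} to the family $W_t(X):=W(X+t e_n)$ on $B_{1/4}$, starting at $t_0$ sufficiently negative that $W_{t_0}$ has its positivity set disjoint from $\overline{B}_{1/4}$ (hence $W_{t_0}\leq 0\leq g$ trivially) and sliding up to $t=0$, we conclude $W\leq g$ throughout $\overline{B}_{1/4}$. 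Property (c) then yields $g(X) \geq U(X + \tau\eps e_n)$ in $B_\delta$, as desired.

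The main obstacle is stage (ii), the construction of $W$. The barrier must simultaneously be a strict comparison subsolution — i.e., the $U$-asymptotic coefficient $\alpha$ in Definition~\ref{defsub}(ii) must strictly exceed $1$ at every free-boundary point, uniformly as $\eps\to 0$ — an $O(\eps)$-perturbation of $U$ on $\partial B_{1/4}$, and yet dominate the full $e_n$-shift $U(\cdot+\tau\eps e_n)$ on $B_\delta$. This tension between strictness of the free-boundary asymptotics and smallness of the bulk perturbation is the delicate point; one expects to achieve it by adding to $U$ a carefully chosen positive auxiliary function that is mildly superharmonic on $B_{1/4}^+(U)$ and whose $r^{1/2}$-expansion at $L$ pushes $\alpha$ strictly above $1$.
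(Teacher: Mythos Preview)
Your three-stage outline matches the paper's proof almost exactly: (i) use Lemma~\ref{basic} to upgrade the one-point gain to $g\geq(1+c\eps)U$ on $\overline{B}_{1/4}$; (ii) invoke the Appendix barrier; (iii) slide via Corollary~\ref{compmon}. Stage~(i) is fine. There is, however, a real gap in how you start the sliding in stage~(iii), and a couple of misstatements about the barrier in stage~(ii).

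\textbf{The sliding cannot be initialized by ``$W_{t_0}\leq 0$''.} Any barrier $W$ of the required type is, like $U$ itself, strictly positive everywhere off the hyperplane $\{z=0\}$; only its trace on $\{z=0\}$ has a zero set. Consequently no finite translation $W_{t_0}(X)=W(X+t_0e_n)$ has positivity set disjoint from $\overline{B}_{1/4}$, and the claim ``$W_{t_0}\leq 0\leq g$ trivially'' is false. The paper handles this with a quantitative estimate built into the barrier: for the explicit family $v_R$ (Proposition~\ref{sub}) with $R=C_0/(c'\eps)$, Corollary~\ref{corest} gives $v_R(X-\tfrac{C_1}{R}e_n)\leq U(X)$ in $\overline{B}_1$, and since $g\geq U$ this yields the correct starting position $t_0=-C_1/R$. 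The endpoint $t_1=c_0/R$ is likewise supplied by Corollary~\ref{corest}, which also provides the strict inequality on $F(v_R^{t_1})\cap\partial B_{1/4}$ needed to verify the hypothesis $v_{t_1}<g$ on $\mathcal{F}(v_{t_1})$ in Corollary~\ref{compmon}.

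\textbf{On the barrier itself.} Two corrections. First, the strictness in Definition~\ref{defsub}(iii) does \emph{not} require $\alpha>1$: it suffices that $v$ fail to be harmonic, and indeed the paper's $v_R$ has $\alpha=1$ but is strictly subharmonic. Second, a subsolution needs $\Delta W\geq 0$, so an additive auxiliary would have to be \emph{sub}harmonic, not superharmonic; in any case the paper does not perturb $U$ additively but instead rotates the two-dimensional profile $V_R(t,z)=U(t,z)\bigl(1+(n-1)t/R\bigr)$ about the axis $\{x'=0,\ x_n=R\}$, which simultaneously curves the free boundary and produces a strictly positive Laplacian of order $1/R\sim\eps$.
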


The main tool in the proof of Lemma \ref{babyH}  will be the following family of radial subsolutions.
Let $R>0$ and denote by $$V_R(t,z) = U(t,z)((n-1)\frac{t}{R} + 1 ). $$ Then set
\begin{equation}v_R(X)= V_R(R- \sqrt{|x'|^2+(x_n-R)^2}, z),\end{equation}
that is we obtain the $n+1$-dimensional function $v_R$ by rotating the 2-dimensional function $V_R$ around  $(0,R,z).$

\begin{prop}\label{sub} If $R$ is large enough, the function $v_R(X)$ is a comparison subsolution to \eqref{FB} in $B_2$ which is strictly monotone increasing in the $e_n$-direction in $B_2^+(v_R)$. Moreover, there exists a function $\tilde v_R$ such that 
\begin{equation*}
U(X) = v_R(X - \tilde v_R(X)e_n) \quad \text{in $B_1\setminus P,$}
\end{equation*}
and
\begin{equation*}
|\tilde v_R(X) - \gamma_R(X)| \leq \frac{C}{R^2} |X|^2, \quad \gamma_R(X)=- \frac{|x'|^2}{2R} + 2(n-1)\frac{x_n r}{R},
\end{equation*} with $r= \sqrt{x_n^2+z^2}$ and $C$ universal.
\end{prop}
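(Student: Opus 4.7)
The plan is to verify, in order: (a) conditions (i)--(iii) of Definition \ref{defsub} for $v_R$, (b) strict monotonicity in the $e_n$--direction, and (c) the construction and Taylor expansion of $\tilde v_R$.

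The starting point is that $v_R(x,z) = V_R(R-\rho, z)$ is radially symmetric in $x$ about the axis $\{x = Re_n\}$, where $\rho := |x-Re_n|$ and $t := R - \rho$. A direct chain-rule computation yields
$$
\Delta v_R \;=\; \Delta_{(t,z)} V_R \;-\; \frac{n-1}{\rho}\,V_{R,t}.
$$
Using the product form $V_R = U \cdot (1 + (n-1)t/R)$ and the harmonicity of $U$ in the upper half-plane, one finds $\Delta_{(t,z)} V_R = 2(n-1)U_t/R$, so
$$
\Delta v_R \;=\; (n-1)U_t\left[\frac{2}{R} - \frac{1}{\rho} - \frac{(n-1)t}{R\rho}\right] - \frac{(n-1)^2 U}{R\rho}.
$$
On $B_2$ with $R$ large, $\rho \in [R-2, R+2]$ and $|t| \leq 2$, so the bracket is $\geq 1/(2R)$; the identity $U = 2rU_t$ (coming from $U^2 = (t+r)/2$) makes the negative term a factor $r/R$ smaller than the positive piece, giving $\Delta v_R > 0$ strictly in $B_2^+(v_R)$, which verifies (i).

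For (ii), at $x_0 \in F(v_R) = \{|x-Re_n|=R\}\cap\mathcal{B}_2$, the inward normal is $\nu(x_0) = (Re_n - x_0)/R$, and Taylor expansion of $\rho$ around $x_0$ gives $t = \nu\cdot(x-x_0) + O(|x-x_0|^2)$. Since $U_t = O(r^{-1/2})$ with $r \sim |X-X_0|$, one obtains $v_R(x,z) = U(\nu\cdot(x-x_0), z) + O(|X-X_0|^{3/2})$, so $\alpha = 1$ and the remainder is $o(|X-X_0|^{1/2})$. Condition (iii) is automatic from $\Delta v_R > 0$. For (b), $\partial_n v_R = V_{R,t}(R-x_n)/\rho$ is strictly positive on $B_2^+(v_R)$ because the two summands in $V_{R,t} = U_t(1+(n-1)t/R) + (n-1)U/R$ are nonnegative and do not simultaneously vanish for $R$ large.

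For (c), strict monotonicity makes $\tilde v_R(X)$ well-defined on $B_1 \setminus P$ as the unique $s$ with $v_R(X - se_n) = U(X)$. Expanding
$$
\rho = R\sqrt{1 - 2x_n/R + |x|^2/R^2} = R - x_n + \frac{|x'|^2}{2R} + O(|X|^3/R^2),
$$
one has $t_s = x_n - s - |x'|^2/(2R) + O(1/R^2)$; substituting into $V_R(t_s, z) = U(x_n, z)$ and Taylor expanding in $s$ and $1/R$ yields, at leading order,
$$
-sU_t(x_n, z) + (n-1)x_n U(x_n, z)/R - U_t\,|x'|^2/(2R) \;=\; O(1/R^2),
$$
which, using $U/U_t = 2r$, gives $s = \gamma_R(X) + O(1/R^2)$. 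The main obstacle is sharpening this $O(1/R^2)$ to the claimed $C|X|^2/R^2$. A rescaling $X = \epsilon Y$ with $|Y| = 1$ (so $\epsilon = |X|$) reveals that $\tilde v_R(\epsilon Y) - \gamma_R(\epsilon Y) = O(\epsilon^3/R^2)$: the next correction to $\rho$ is the cubic $x_n|x|^2/(2R^2)$, while the other would-be $1/R$--order terms cancel via $U/U_t = 2r$. The delicate point is that $U_t$ blows up like $r^{-1/2}$ near $L$, so the Taylor expansion must be organized in polar coordinates $(r, \theta)$ about $L$, exploiting the homogeneity of $U$, to control remainders uniformly up to $L\setminus P$. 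Since $|X|^3/R^2 \leq |X|^2/R^2$ on $B_1$, this yields the stated bound.
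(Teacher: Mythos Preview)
Your verification of subharmonicity and monotonicity is correct and essentially matches the paper. There are, however, real issues in (ii) and (c).

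In (ii), the claim ``$r \sim |X-X_0|$'' is false, and the asserted $O(|X-X_0|^{3/2})$ remainder does not hold. The 2D radius $r=\sqrt{t^2+z^2}$ (with $t=R-\rho$) can be much smaller than $|X-X_0|$ when $x-x_0$ is nearly tangential to $F(v_R)$. Concretely, at $x_0=0$ take $x=(x',0)$ and $z$ with $|x'|^3 \ll z \ll |x'|^2/R$; then $t\approx -|x'|^2/(2R)$, $z\ll|t|$, and one computes $U(t,z)\approx z/(2|t|^{1/2})\ll U(0,z)=z^{1/2}/\sqrt{2}$, so $|U(t,z)-U(0,z)|\approx z^{1/2}\gg |x'|^{3/2}\approx|X|^{3/2}$. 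The fix (which the paper uses) is to invoke the global $C^{1/2}$ regularity of $U$ directly: $|V_R(t,z)-V_R(x_n,z)|\leq C|t-x_n|^{1/2}=O(|x'|/\sqrt{R})=O(|X-X_0|)$, which is $o(|X-X_0|^{1/2})$ as required by Definition~\ref{defsub}.

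In (c), your leading-order identification $\tilde v_R\approx\gamma_R$ is correct, but the remainder argument is only heuristic: after rescaling $X=\epsilon Y$ the function $v_R$ is not homogeneous, so the equation does not become scale-invariant; and a direct Taylor expansion in $s$ fails because the shift $t_s-x_n$ contains the term $-|x'|^2/(2R)$, giving a second-order remainder of size $|U_{tt}|\cdot(t_s-x_n)^2/U_t \sim r^{-1}|X|^4/R^2$, which is not $O(|X|^2/R^2)$ when $r\ll|X|^2$. The paper circumvents this by decoupling into a 2D step and an $(n{+}1)$D step. In 2D it proves $|\tilde t + 2(n-1)tr/R|\leq \tilde C r^3/R^2$ by sandwiching $U(t,z)$ between $V_R(t-2(n-1)tr/R \pm \tilde C r^3/R^2,z)$; the key is that here the shift $\bar t$ is $O(r^2/R)$, so the second-order Taylor remainder $\partial_{tt}V_R\cdot \bar t^2$ is bounded by $Cr^{-2}U(t,z)\cdot r^4/R^2$, the bound $|\partial_{tt}V_R(s,z)|\leq C r^{-2}U(t,z)$ coming from homogeneity of $U$ together with boundary Harnack for $U,\,U_t,\,U_{tt}$ in the rescaled annulus $B_{3/2}\setminus B_{1/2}$. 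The passage to $(n{+}1)$D then uses the exact identity $R-\rho-x_n=-|x'|^2/(R-x_n+\rho)$ and is elementary, contributing only an additional $O(|x'|^2/R^2)$ error.
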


The proof of Proposition \ref{sub} follows from long and tedious computations and we postpone it till the Appendix. Using the estimate for $\tilde v_R$ in Proposition \ref{sub} and Lemma \ref{elem}, we also obtain the following Corollary which will be crucial for the proof of Lemma \ref{babyH}. Its proof is again presented in the Appendix.

\begin{cor}\label{corest}There exist $\delta, c_0,C_0, C_1$ universal constants, such that 
\begin{equation}
\label{2} v_R(X+ \frac{c_0}{R}e_n) \leq (1+\frac{C_0}{R}) U(X), \quad \text{in $\overline{B}_1 \setminus B_{1/4}$},\end{equation} with strict inequality on $F(v_R(X+ \frac{c_0}{R} e_n)) \cap  \overline{B}_1 \setminus B_{1/4},$
\begin{align}
\label{4}& v_R(X + \frac{c_0}{R}e_n) \geq U(X + \frac{c_0}{2R} e_n), \quad \text{in $B_{\delta},$}\\
\label{3}& v_R(X - \frac{C_1}{R} e_n) \leq U(X), \quad \text{in $\overline{B}_1.$}
\end{align}
\end{cor}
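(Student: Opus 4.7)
The strategy is to use the approximation $\tilde v_R \approx \gamma_R$ from Proposition \ref{sub}, together with strict monotonicity of $v_R$ in $e_n$, to reduce each of \eqref{2}, \eqref{4}, \eqref{3} to a one-line pointwise estimate on $\tilde v_R$. Indeed, the defining relation $U(X) = v_R(X - \tilde v_R(X) e_n)$ and monotonicity give, modulo trivial checks on the zero sets (both $U$ and $v_R$ vanish past the relevant hyperplane, resp.\ paraboloid), the equivalence
\[
v_R(X + s e_n) \le U(X + t e_n) \quad \Longleftrightarrow \quad \tilde v_R(X + t e_n) \le t - s,
\]
with an analogous statement for $\ge$. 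From Proposition \ref{sub} we also record that $|\tilde v_R|\le C''/R$ on $\overline B_1$ for a universal $C''$, since $|\gamma_R|\le C/R$ there.

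Inequality \eqref{3} is immediate: take $s=-C_1/R$, $t=0$, so the condition becomes $\tilde v_R(X)\le C_1/R$, which holds on $\overline B_1$ provided $C_1\ge C''$. For \eqref{4}, take $s=c_0/R$, $t=c_0/(2R)$; since $\gamma_R(X) = -|x'|^2/(2R) + 2(n-1)x_n r/R$ vanishes quadratically at the origin, $|\gamma_R(X+(c_0/(2R))e_n)|\le C\delta^2/R$ for $X\in B_\delta$, so
\[
\tilde v_R(X+(c_0/(2R))e_n) \ge -C\delta^2/R - C'/R^2 \ge -c_0/(2R)
\]
as soon as $\delta$ is chosen small in terms of $c_0$ (both universal).

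The delicate case is \eqref{2}, where the right-hand side $(1+C_0/R)U(X)$ is not of the form $U(X + t e_n)$. Split $\overline B_1\setminus B_{1/4}$ according to $r:=\sqrt{x_n^2+z^2}$. In Region A, $r\ge \bar\delta$ for a fixed universal $\bar\delta$: the $(n+1)$-dimensional extension of Lemma \ref{cor2} (valid because $U$ depends only on $(x_n,z)$) gives $(1+C_0/R)U(X)\ge U(X+(\tau/R)e_n)$ for $\tau=C_0/C$, so it suffices that $v_R(X+(c_0/R)e_n)\le U(X+(\tau/R)e_n)$, which by the equivalence amounts to $\tilde v_R(X+(\tau/R)e_n)\le (\tau-c_0)/R$, true if $C_0$ is chosen large enough universally. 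In Region B, $r<\bar\delta$, the constraint $|X|\ge 1/4$ forces $|x'|^2\ge 1/16-\bar\delta^2\ge 1/32$ (pick $\bar\delta\le 1/8$), so the dominant term $-|x'|^2/(2R)$ in $\gamma_R(X)$ is $\le -1/(64R)$ while the other term is $O(\bar\delta^2/R)$; hence $\tilde v_R(X) \le -1/(128R) + C/R^2 \le -c_0/R$ for $c_0$ small enough universal, yielding $v_R(X+(c_0/R)e_n)\le U(X)$, stronger than required. The strict inequality on $F(v_R(\cdot+(c_0/R)e_n))\cap \overline B_1\setminus B_{1/4}$ follows from the same count: at $z=0$ the shifted free boundary is $x_n+c_0/R = (|x'|^2+(x_n+c_0/R)^2)/(2R)$, so $|X|\ge 1/4$ forces $|x'|^2\ge 1/32$ and hence $x_n \ge 1/(64R)-c_0/R>0$ for $c_0$ small, placing the point strictly off $P$, where $U(X)>0$.

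The main obstacle is Region B of \eqref{2}: both sides tend to $0$ as $X\to L=\{x_n=z=0\}$, and the multiplicative bound of Lemma \ref{cor2} (whose constant behaves like $1/r$) degenerates there. The resolution is the observation that precisely in this region the negative quadratic term $-|x'|^2/(2R)$ in $\gamma_R$ becomes dominant, so the effective shift in $v_R$'s $e_n$-domain variation is actually negative, and monotonicity alone suffices—no multiplicative correction is needed.
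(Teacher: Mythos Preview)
Your proof is correct and follows essentially the same approach as the paper: inequalities \eqref{3} and \eqref{4} come directly from the estimate on $\tilde v_R$ plus monotonicity (the paper phrases this as ``\eqref{estvr} and Lemma \ref{elem}''), and for \eqref{2} you make the same split into a region near $L$ (where the $-|x'|^2/(2R)$ term of $\gamma_R$ dominates and gives $v_R(X+\tfrac{c_0}{R}e_n)\le U(X)$ outright) and a region $\{r\ge\bar\delta\}$ away from $L$ (where Lemma \ref{cor2} converts the multiplicative factor into an $e_n$-shift). The only cosmetic difference is that the paper first proves the Region~B bound on the slightly larger set $\{|x'|\ge 1/8,\ |x_n|\le\bar\delta\}$ and then restricts, while you work directly on $\{r<\bar\delta\}$ and bound $|x_nr|\le r^2\le\bar\delta^2$; your treatment of the strict inequality on the shifted free boundary is also more explicit than the paper's.
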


\

We are now ready to present the proof of Lemma \ref{babyH}.

\

\textit{Proof of Lemma \ref{babyH}.} We prove the first statement.
In view of \eqref{Bound}
$$g(\bar X) - U(\bar X) \geq U(\bar X+\eps e_n) - U(\bar X) = \p_tU(\bar X+ \lambda e_n) \eps \geq c\eps, \quad \lambda \in (0,\eps).$$ As in Lemma \ref{basic} we then get 
\begin{equation} \label{gU2}g(X) \geq (1+ c'\eps) U(X) \quad \text{in $\overline{B}_{1/4}.$}\end{equation} Now let 
$$R= \frac{C_0}{c'\eps},$$
where from now on the $C_i, c_i$ are the constants in Corollary  \ref{corest}. Then, for $\eps$ small enough $v_R$ is a subsolution to \eqref{FB} in $B_2$ which is monotone increasing in the $e_n$- direction and it also satisfies \eqref{2}--\eqref{3}. 
We now wish to apply the Comparison Principle as stated in Corollary \ref{compmon}. Let
$$v_R^t(X) = v_R(X+t e_n), \quad X \in B_1,$$ then according to \eqref{3}, $$v_R^{t_0} \leq U \leq g \quad \text{in $\overline{B}_{1/4}$, with $t_0=-C_1/R.$}$$ Moreover, from \eqref{2} and \eqref{gU2} we get that for our choice of $R$,
$$v_R^{t_1} \leq (1+c'\eps) U \leq g \quad \text{on $\p B_{1/4}$, with $t_1= c_0/R,$}$$ with strict inequality on $F(v_R^{t_1}) \cap \p B_{1/4}.$ 
In particular 
$$g > 0 \quad \text{on $\mathcal{F}(v_R^{t_1})$ in $\p B_{1/4}$}.$$ 
Thus we can apply  Corollary \ref{compmon} in the ball $B_{1/4}$ to obtain 
$$v_R^{t_1} \leq g \quad \text{in $B_{1/4}$.}$$
From \eqref{4} we have that 
$$U(X+\frac{c_1}{R}e_n) \leq v_R^{t_1}(X) \leq g (X) \quad \text{on $B_{\delta}$}$$ which is the desired claim \eqref{onesideimprov} with $\tau= \frac{c_1 c'}{C_0}$. 
\qed

\

We now present the proof of the Harnack Inequality.

\

\textit{Proof of Theorem \ref{mainH}.}  Without loss of generality, we can assume $a_0=-1, b_0=1$. Also, in view of Remark \ref{rescale} we can take $\rho=1$ (thus $2\eps \leq \bar \eps$).

We distinguish several cases. In what follows $\bar \eps$ and $\delta$ denote  the universal constants in Lemma \ref{babyH}.

\

{\bf Case 1.} If $$d(X^*, \{x_n \geq \eps, z=0\}) > \delta/16,$$ then $U(X-\eps e_n) >0$ in $B_{\delta/16}( X^*) \subset B_1(X^*).$ Thus the functions $U(X-\eps e_n), U(X+\eps e_n)$ and $g(X)$ are positive and harmonic in $B_{\delta/16}(X^*)$. Assume that (the other case is treated similarly)
$$g(X^*)\geq U(X^*). $$ Then,
$$g(X^*)\geq U( X^*) = U(X^* - \eps e_n) +  U_n(X^* - \lambda \eps e_n)\eps, \quad \lambda \in (0,1).$$

Since $U_n$ is positive and harmonic in $B_{\delta/16}(X^*)$ and for $\bar \eps < \delta/16$
$$X^* - \lambda \eps e_n \in B_{\delta/32}(X^*),$$ we can apply Harnack inequality to 
conclude that 
$$g(X^*) \geq U(X^*-\eps e_n) + c U_n(X^*)\eps, $$
for $c$ small universal.

Then again by Harnack inequality in $B_{\delta/16}(X^*)$ for  $g(X) - U(X - \eps e_n) \geq 0$ we get that for $c'$ universal

$$g(X) \geq U(X-  \eps e_n) + c'  U_n(X^* )\eps, \quad \text{in $B_{\delta/32}(X^*)$}.$$

By a similar argument, for $\bar \eps < \delta/32$

$$U(X-(1-\eta)\eps e_n) - U(X-\eps e_n) \leq C U_n(X^*) \eta \eps, \quad \text{in $B_{\delta/64}(X^*),$}$$ with $C$ universal.

Thus, combining these two last inequalities we obtain that for $\eta = \min\{c'/C, \delta/64\}$

$$g(X) \geq U(X-(1-\eta)\eps e_n), \quad \text{in $B_\eta (X^*),$}$$
as desired.

\

{\bf Case 2.} If $$d(X^*, \{x_n=-\eps, z=0\}) \leq \delta/2,$$ we wish to apply Lemma \ref{babyH}. 
Then (for $\bar \eps < \delta/4$)$$g(X) \geq U(X- \eps e_n) \quad \text{in $B_{1/2}(\eps e_n) \subset B_1(X^*).$}$$ 
Let $\bar X = \frac{1}{4}e_n$  and assume that  (the other case follows similarly)
$$g(\bar X) \geq U(\bar X).$$
Since  (for $\bar \eps$ small) $$\bar X \in B_{1/8}((\frac{1}{4}+\eps)e_n),$$ we can apply Lemma \ref{babyH} and conclude that
$$g(X) \geq U(X- (1-\eta)\eps e_n), \quad \text{in $B_{\delta}(\eps e_n).$}$$

Thus the desired improvement holds by choosing $\eta< \delta/4 $. Indeed for such $\eta$ and $\bar \eps < \delta/4$ we have that $d(X^*, \{x_n=\eps, z=0\}) \leq 3\delta/4$ and hence 
$$B_\eta(X^*) \subset B_{\delta}(\eps e_n).$$

\

{\bf Case 3.}  If $$d(X^*, \{x_n=-\eps, z=0\}) > \delta/2 \quad \text{and} \quad d(X^*, \{x_n \geq \eps, z=0\}) \leq \delta/16,$$ then 
the functions $U(X- \eps e_n), U(X+ \eps e_n)$ and $g(X)$ are positive and harmonic in the half ball $B_{\delta /4}(\tilde X) \cap \{z>0\}$ for some $\tilde X \in \{x_n \leq -\eps, z=0\}$ and they all vanish continuously on $B_{\delta/4}(\tilde X) \cap \{z=0\}.$ Thus we can repeat a similar argument as in Case 1, by using the boundary Harnack inequality. 
Precisely, let $\bar X=  \tilde X + \frac{\delta}{6}e_{n+1}$ and assume that 
(the other case is treated similarly)
$$g(\bar X)\geq U(\bar X).$$  Then, 
$$g(\bar X)\geq U( \bar X) = U(\bar X -  \eps e_n) +  U_n(\bar X -\lambda \eps e_n)\eps, \quad \lambda \in (0,1).$$

By Harnack inequality for $U_n$ in the ball $B_{2\eps}(\bar X) \subset B_{\delta/4}(\tilde X) \cap \{z>0\}$ (with $\bar \eps < \delta/12$) we conclude that
\begin{equation} \label{comput}g(\bar X) \geq U(\bar X -\eps e_n) + c  U_n(\bar X)\eps,\end{equation}
for $c$ small universal.

Then by Boundary Harnack inequality in $B_{\delta/4}(\tilde X) \cap \{z>0\}$, for the functions $g(X) -U(X-\eps e_n)$ and $U_n(X)$ we get that for $c'$ universal
\begin{equation}\label{gontop}g(X) \geq U(X- \eps e_n) + c'U_n(X) \eps, \quad \text{in $B_{\delta/8}(\tilde X) \cap \{z \geq 0\}$}.\end{equation}

Thus to obtain the desired claim it is enough to choose $\eta$ small such that for $X \in B_{\delta/8}(\tilde X) \cap \{z\geq 0\}$
$$U(X- \eps e_n) + c' U_n(X ) \eps \geq U(X- (1-\eta) \eps e_n).$$ 
By a similar argument as above 
$$U(\bar X- (1-\eta) \eps e_n) - U(\bar X- \eps e_n) \leq C U_n(\bar X) \eta \eps,$$ and hence by boundary Harnack inequality,
$$U( X- (1-\eta) \eps e_n) - U( X- \eps e_n) \leq C' U_n(X) \eta \eps, \quad \text{in $B_{\delta/8}(\tilde X) \cap \{z \geq 0\}$}.$$  

Combining this inequality with \eqref{gontop} we obtain that for $\eta=c'/C'$
$$g(X) \geq U( X- (1-\eta) \eps e_n), \quad \text{in $B_{\delta/8}(\tilde X) \cap \{z \geq 0\}$}.$$

Since all the functions involved are even with respect to $\{z=0\}$  and for $\eta < \delta/16$ $$B_\eta(X^*) \subset B_{\delta/8}(\tilde X),$$ our proof is complete.
\qed

\section{Improvement of flatness.}

In this Section we state the improvement of flatness property for solutions to \eqref{FB} and we provide its proof. Our main Theorem \ref{mainT} follows from Theorem \ref{iflat} and Lemma \ref{hyp}.

\begin{thm}[Improvement of flatness]\label{iflat}There exists $\bar \eps > 0$ and $\rho>0$ universal constants such that for all  $0 < \eps \leq \bar \eps$ if $g$ solves \eqref{FB}  with $0 \in F(g)$ and it satisfies 
\begin{equation}\label{flatimp}U(X - \eps e_n) \leq g(X) \leq U(X+\eps e_n) \quad \textrm{in $B_1$,}\end{equation} then 
\begin{equation}\label{flatimp2}U(x \cdot \nu  - \frac \eps 2 \rho, z) \leq g(X) \leq U(x\cdot \nu+\frac \eps 2 \rho, z) \quad \textrm{in $B_\rho$},\end{equation} for some direction $\nu \in \R^n, |\nu|=1.$ 
\end{thm}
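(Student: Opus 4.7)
The plan is the Savin-style compactness/linearization strategy announced in the introduction. Suppose the theorem fails for every universal $\rho$ to be fixed below; then there exist $\eps_k\downarrow 0$ and viscosity solutions $g_k$ of \eqref{FB} with $0\in F(g_k)$ satisfying \eqref{flatimp} with $\eps=\eps_k$, but for which \eqref{flatimp2} fails for every unit $\nu\in\R^n$. Let $\tilde g_k$ be the $\eps_k$-domain variation of $g_k$ given by \eqref{deftilde}. By Corollary \ref{corHI}, the graphs $A_{\eps_k}\cap(B_{1/2}\times[-1,1])$ are trapped between two H\"older envelopes with a common universal modulus that collapse onto a single graph; extracting a subsequence, Ascoli--Arzel\`a gives $\tilde g_k\to w$ (in the Hausdorff sense of graphs) for some H\"older continuous $w:B_{1/2}\to[-1,1]$ with $w(0)=0$.

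The main claim is that $w$ is a viscosity solution of the linearized problem \eqref{linear}. For interior points of $B_{1/2}\setminus P$, the identity $U(X)=g_k(X-\eps_k\tilde g_k(X)e_n)$ combined with harmonicity of $g_k$ linearizes in $\eps_k$ to $\Delta(U_n w)=0$; this is verified rigorously by perturbing a smooth touching test profile $\phi$ with $\Delta(U_n\phi)\neq 0$ and using \eqref{gtildeg} to transfer the touching to $g_k$, thereby violating the maximum principle. The free-boundary condition $|\nabla_r w|=0$ on $L$ is checked via the radial barriers of Proposition \ref{sub}: given a test function $\phi$ satisfying the expansion of Definition \ref{linearsol} with slope $b(X_0)>0$ and touching $w$ strictly by below at $X_0\in B_{1/2}\cap L$, set $R_k=2(n-1)/(b\eps_k)$. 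From $U(X)=v_{R_k}(X-\tilde v_{R_k}(X)e_n)$ and the estimate in Proposition \ref{sub} one computes
\begin{equation*}
(v_{R_k})^{\sim}_{\eps_k}(X)=\frac{\tilde v_{R_k}(X)}{\eps_k}=-\frac{b|x'|^2}{4(n-1)}+br+O(\eps_k).
\end{equation*}
After a vertical shift by $\eps_k\phi(X_0)e_n$, a tangential tilt that adds $\eps_k a(X_0)\cdot(x'-x_0')$ in the $e_n$-direction, and recentering at $X_0$, the resulting functions $\varphi_k$ remain strict comparison subsolutions of \eqref{FB} (Definition \ref{defsub}) and their $\eps_k$-domain variations converge uniformly on compacta of $B_{1/2}\setminus P$ to
\begin{equation*}
\phi(X_0)+a(X_0)\cdot(x'-x_0')+br-\frac{b}{4(n-1)}|x'-x_0'|^2.
\end{equation*}
The strictly negative quadratic term, combined with $\phi$ touching $w$ strictly by below at $X_0$ and with $\tilde g_k\to w$ uniformly, forces $\varphi_k$ to touch $g_k$ by below at some free-boundary point for $k$ large, contradicting Definition \ref{defsub}. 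The case $b(X_0)<0$ is symmetric, and Lemma \ref{rk} ensures there are no other touching phenomena on $P\setminus L$. Hence $w$ solves \eqref{linear}.

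Theorem \ref{lineimpflat3} then supplies $a_0\in\R^{n-1}$ and a universal $C$ with $|w(X)-a_0\cdot x'|\le C|X|^{3/2}$ in $B_{1/2}$. Pick a universal $\rho$ with $C\rho^{3/2}\le\rho/8$. For large $k$ the uniform convergence yields $|\tilde g_k(X)-a_0\cdot x'|\le\rho/4$ on $B_\rho\setminus P$. With $\nu_k=(e_n+\eps_k(a_0,0))/|e_n+\eps_k(a_0,0)|$ one has $x\cdot\nu_k=x_n+\eps_k a_0\cdot x'+O(\eps_k^2|x|)$, and Lemma \ref{elem} combined with Lemma \ref{cor2} converts this into the sandwich \eqref{flatimp2} in the direction $\nu_k$ inside $B_\rho$, up to $O(\eps_k^2)$ errors absorbed for $k$ large. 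This contradicts the choice of $g_k$.

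\emph{Main obstacle.} The heart of the argument is verifying the linearized free-boundary condition on $L$. The test profiles in Definition \ref{linearsol} are only Lipschitz in $(x_n,z)$ through the $br$ term, so no smooth perturbation of $U$ can serve as a comparison subsolution for \eqref{FB}. The radial barriers $v_R$ of Proposition \ref{sub} are precisely engineered so that the leading term $2(n-1)x_nr/R$ of $\tilde v_R$ produces, under the scaling $R_k\sim 1/\eps_k$, a $br$ profile of the right size; the companion $-|x'|^2/(2R)$ term supplies the strict inequality on $x'$-slices that drives the contradiction. Keeping all translations, tilts and asymptotic matchings simultaneously consistent with the strict subsolution condition in Definition \ref{defsub} is the only technically nontrivial step.
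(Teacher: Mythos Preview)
Your compactness--linearization scheme is exactly the paper's, and the ingredients (Corollary~\ref{corHI} for compactness, Proposition~\ref{sub} for the free-boundary barrier, Theorem~\ref{lineimpflat3} for the linear estimate, Lemma~\ref{seclem}/Lemma~\ref{elem} for the final conversion) are all the right ones. The gap is in the barrier step on $L$.

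Proposition~\ref{sub} gives $\tilde v_R\approx\gamma_R(X)=-\dfrac{|x'|^2}{2R}+\dfrac{2(n-1)x_nr}{R}$, so with your choice $R_k=2(n-1)/(b\eps_k)$ one gets
\[
\frac{\tilde v_{R_k}(X)}{\eps_k}=-\frac{b\,|x'|^2}{4(n-1)}+b\,x_n r+O(\eps_k),
\]
with $x_nr$, \emph{not} $r$. The test profile $\phi$ in Definition~\ref{linearsol} carries a $br$ term, so the barrier does not reproduce $\phi$ to leading order and your displayed limit is incorrect. The paper (Lemma~\ref{limitsol}) handles this mismatch by inserting an intermediate comparison function
\[
q(X)=\phi(X_0)-\tfrac{\alpha}{2}|x'-y'_0|^2+2\alpha(n-1)\,x_n r
\]
that touches $\phi$ strictly from below in a tubular neighborhood $N_{\bar r}$ of $X_0$; this is possible because $|x_nr|\le r^2=o(r)$ near $L$, so the $br$ term of $\phi$ dominates. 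The barrier $w_k(X)=v_{R_k}(X-Y'+\eps_k\phi(X_0)e_n)$ with $R_k=1/(\alpha\eps_k)$ is then built to match $q$, not $\phi$. Note also that the linear term $a(X_0)\cdot(x'-x'_0)$ is absorbed by choosing the \emph{center} $Y'=(y'_0,0,0)$ appropriately (expanding $-\tfrac{\alpha}{2}|x'-y'_0|^2$ around $x'_0$ produces the desired linear part), not by the ``tangential tilt'' you propose; a tilt of $v_{R_k}$ would require re-verifying Definition~\ref{defsub}, which is not immediate. Finally, the contradiction is obtained through the domain-variation comparison principle Lemma~\ref{linearcomp}: the strict separation $\tilde g_\infty-q\ge\delta$ on $N_{\bar r}\setminus N_{\bar r/2}$ passes to $\tilde g_k-\tilde w_k\ge\delta/2$ there, and Lemma~\ref{linearcomp} propagates this to all of $N_{\bar r}$, contradicting $\tilde g_k(X_k)-\tilde w_k(X_k)\to 0$ along $X_k\to X_0$. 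Your phrasing ``forces $\varphi_k$ to touch $g_k$ by below at some free-boundary point'' skips this, and a direct sliding argument would still have to exclude interior touching via the same comparison mechanism.
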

 
The proof of Theorem \ref{iflat} will easily follow from the next four lemmas. 

\begin{lem} \label{seclem}Let $g$ be a solution to \eqref{FB} with $0\in F(g)$ and satisfying \eqref{flatimp}. Assume that the corresponding  $\tilde g_\eps$ satisfies 
\begin{equation}\label{bound}  a_0 \cdot x' - \frac{1}{4} \rho\leq  \tilde g_\eps(X) \leq a_0 \cdot x' + \frac{1}{4} \rho \quad \text{in $B_{2\rho} \setminus P,$}\end{equation} for some $a_0 \in \R^{n-1}$. Then if $\eps \leq \bar \eps (a_0,\rho)$  $g$ satisfies \eqref{flatimp2} in $B_\rho$.
\end{lem}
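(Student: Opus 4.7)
The plan is to choose the unit vector
\[
\nu := \frac{(\eps a_0,\,1)}{\sqrt{1+\eps^2|a_0|^2}} \in \R^n,
\]
which is the approximate unit normal, on the $\{g>0\}$-side, to the tilted hyperplane $\{x_n+\eps a_0\cdot x'=0\}$: heuristically, hypothesis \eqref{bound} forces $g(X)\approx U(x_n+\eps a_0\cdot x',\,z)$ (the two--variable function), so this plane is the approximate free boundary. The candidate bounds are then $\varphi_\pm(X):=U(x\cdot\nu\pm\eps\rho/2,\,z)$, and the goal is to verify $\varphi_-\le g\le\varphi_+$ on $B_\rho$.

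I would carry this out through the $\eps$-domain variation calculus developed at the end of Section 3. Using \eqref{deftilde2} and the strict monotonicity of $U$ in its first argument, the defining identity $U(X)=\varphi_\pm(X-\eps\tilde\varphi_\pm(X)e_n)$ rearranges into the explicit formula
\[
\tilde\varphi_\pm(X)=\frac{x\cdot\nu-x_n\,\pm\,\eps\rho/2}{\eps\,\nu_n}.
\]
Plugging in the explicit form of $\nu$ and Taylor-expanding $\sqrt{1+\eps^2|a_0|^2}$ around $1$ yields, uniformly on $B_{2\rho}$,
\[
\tilde\varphi_\pm(X)=a_0\cdot x'\,\pm\,\tfrac\rho2\,+\,O\!\bigl(\eps\,|a_0|^2\,\rho\bigr).
\]
Choosing $\bar\eps=\bar\eps(a_0,\rho)$ small enough that this error term is at most $\rho/4$ in absolute value, hypothesis \eqref{bound} produces the sandwich
\[
\tilde\varphi_-(X)\,\le\,a_0\cdot x'-\tfrac\rho4\,\le\,\tilde g_\eps(X)\,\le\,a_0\cdot x'+\tfrac\rho4\,\le\,\tilde\varphi_+(X) \quad\text{on } B_{2\rho}\setminus P.
\]
The implication \eqref{gtildeg}, together with its analogue for the reverse inequality (both valid because $\varphi_\pm$ are strictly monotone in $e_n$ -- a straightforward consequence of $\nu_n>0$), then lifts this to $\varphi_-\le g\le\varphi_+$ on $B_{2\rho-\eps}\supset B_\rho$, which is exactly \eqref{flatimp2}.

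The main technical nuisance is the bookkeeping needed to legitimately invoke \eqref{gtildeg}: one must check that $\tilde\varphi_\pm$ are Lipschitz with values in $[-1,1]$ on the relevant ball. Lipschitzness is immediate since $\tilde\varphi_\pm$ are smooth in $(x',x_n)$ and $z$-independent; the $[-1,1]$ bound forces $\rho$ to be small compared to $1/|a_0|$, which is the qualitative source of the dependence $\bar\eps=\bar\eps(a_0,\rho)$. Fixing the correct sign of $\nu$ is another place to be careful: the opposite choice $(-\eps a_0,1)/\sqrt{1+\eps^2|a_0|^2}$ would put the approximate normal on the $\{g=0\}$-side and produce $-a_0\cdot x'$ in the expansion of $\tilde\varphi_\pm$, breaking the sandwich.
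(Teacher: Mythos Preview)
Your proof is correct and follows essentially the same route as the paper: the same choice of $\nu$, the same explicit computation of the $\eps$-domain variation of $U(x\cdot\nu\pm\eps\rho/2,z)$, the same Taylor expansion, and the same appeal to the equivalence between inequalities for $\tilde g_\eps$ and for $g$. The only cosmetic difference is that the paper invokes Lemma~\ref{elem} directly (which requires only strict $e_n$-monotonicity of $\varphi_\pm$, not the $[-1,1]$ range), so your worry about the $[-1,1]$ constraint is unnecessary.
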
 
\begin{proof} We prove that the lower bound holds (the upper bound can be proved similarly.)

Let,
$$\nu = (\nu', \nu_n) := \frac{(0,1) + \eps(a_0,0)}{\sqrt{1+\eps^2a_0^2}},$$
and call
$$u(X)= U(x \cdot \nu  - \frac \eps 2 \rho, z).$$ Notice that since $\nu_n>0$, $u$ is strictly monotone increasing in the $e_n$-direction say in $B_{2\rho}^+(u).$
Also, we can easily compute $\tilde u_\eps$ by its definition. Indeed, the identity
$$u(X -\eps \tilde u_\eps(X)e_n) = U(X), \quad  X\in \R^{n+1} \setminus P$$ reads as
$$U(x'\cdot \nu' + x_n \nu_n - \eps \tilde u_\eps(X)\nu_n - \frac \eps 2 \rho, z) = U(x_n,z),$$ and hence
\begin{equation}\label{formula}\tilde u_\eps (X) = \frac{x'\cdot \nu' + (\nu_n-1) x_n}{\eps \nu_n} - \frac{\rho}{2\nu_n}.\end{equation}

Thus, according to Lemma \ref{elem} it suffices to show that 
$$\tilde u_\eps \leq \tilde g_\eps \quad \text{in $B_{\rho+\eps} \setminus P,$}$$ and hence in view of \eqref{bound} we must show that
$$\tilde u_\eps(X) \leq  a_0 \cdot x' - \frac{1}{4} \rho\, \quad \text{in $B_{\rho+\eps} \setminus P.$}$$

From the choice  of $\nu$ we see that
$$\frac{\nu'}{\eps \nu_n} = a_0,$$ and
$$\frac{|\nu_n -1|}{\eps \nu_n} = \frac{1-\nu_n}{\eps \nu_n} \leq \eps a_0^2.$$
Thus, in view of the formula \eqref{formula} the desired inequality reduces to
$$x' \cdot a_0 + 2\rho \eps a_0^2  - \frac{\rho}{2} \leq x' \cdot a_0 -\frac \rho 4,$$
which is trivially satisfied for $\eps$ small enough (depending on $a_0, \rho$.)
\end{proof}

The next lemma follows immediately from the Corollary \ref{corHI} to Harnack inequality. 

\begin{lem}\label{ginfty} Let $\eps_k \rightarrow 0$ and let $g_k$ be a sequence of solutions to \eqref{FB} with $0 \in F(g_k)$ satisfying \begin{equation}\label{flatimp_k}U(X - \eps_k e_n) \leq g_k(X) \leq U(X+\eps_k e_n) \quad \textrm{in $B_1$.}\end{equation}
Denote by  $\tilde g_k$ the  $\eps_k$-domain variation of $g_k$.
Then the sequence of sets 
$$A_k := \{(X, \tilde g_k (X)) : X \in B_{1-\eps_k} \setminus P\},$$ 
has a subsequence that converge uniformly (in Hausdorff distance) in $B_{1/2} \setminus P$ to the graph $$A_\infty := \{(X,\tilde g_\infty(X)) : X \in B_{1/2} \setminus P\},$$ where $\tilde g_\infty$ is  a Holder continuous function.
\end{lem}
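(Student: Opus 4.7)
The plan is to package the iterated Harnack inequality into a compactness statement. Fix $k$ large enough that $\eps_k \leq \bar\eps/2$ so that Corollary \ref{corHI} applies to $g_k$ on $B_1$. Choose $m_0 = m_0(k)$ to be the largest integer satisfying $2\eps_k(1-\eta)^{m_0}\eta^{-m_0} \leq \bar\eps$; since $\eta \in (0,1)$ is universal and $\eps_k \to 0$, we have $m_0(k) \to \infty$. The corollary then furnishes functions $a_k := a_{\eps_k}$ and $b_k := b_{\eps_k}$, defined on $B_{1/2} \setminus P$ and taking values in $[-1,1]$, such that
\begin{equation*}
a_k(X) \leq y \leq b_k(X) \quad \text{for every } (X,y) \in A_k \cap (B_{1/2} \times [-1,1]),
\end{equation*}
with $b_k - a_k \leq 2(1-\eta)^{m_0(k)-1} \to 0$ and with both $a_k,b_k$ sharing the common H\"older modulus of continuity $\alpha t^\beta$ for universal constants $\alpha,\beta$.

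Next I would apply Arzel\`a--Ascoli. The family $\{a_k\}$ is uniformly bounded in $[-1,1]$ and uniformly equicontinuous on compact subsets of $B_{1/2} \setminus P$ (in fact, on all of $B_{1/2}$ by the uniform H\"older bound, since the modulus extends continuously across $P$). Pass to a subsequence along which $a_k$ converges uniformly on compact subsets of $B_{1/2}$ to some function $\tilde g_\infty$, which automatically inherits the H\"older modulus $\alpha t^\beta$. Since $\|b_k - a_k\|_\infty \to 0$, the same subsequence of $b_k$ converges uniformly to the same limit $\tilde g_\infty$.

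Finally, to conclude Hausdorff convergence of the sets $A_k \to A_\infty = \{(X,\tilde g_\infty(X)) : X \in B_{1/2} \setminus P\}$ on $B_{1/2} \setminus P$, I would argue in two directions. For any $(X, y) \in A_k \cap (B_{1/2}\setminus P) \times [-1,1]$ we have $a_k(X) \leq y \leq b_k(X)$, so
\begin{equation*}
|y - \tilde g_\infty(X)| \leq (b_k - a_k)(X) + \max\{|a_k - \tilde g_\infty|, |b_k - \tilde g_\infty|\}(X) \to 0
\end{equation*}
uniformly; conversely, every $(X, \tilde g_\infty(X)) \in A_\infty$ is approximated by $(X, \tilde g_k(X)) \in A_k$ with $|\tilde g_k(X) - \tilde g_\infty(X)| \leq (b_k - a_k)(X) + |a_k(X) - \tilde g_\infty(X)| \to 0$. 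Both bounds are uniform on compact subsets of $B_{1/2} \setminus P$, which gives the claimed Hausdorff convergence.

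There is essentially no serious obstacle; the proof is a routine unwinding of the uniform H\"older control provided by the Harnack corollary together with Arzel\`a--Ascoli. The one point that requires a moment's care is the possible multivaluedness of $\tilde g_k$, but since by construction every branch is sandwiched between $a_k$ and $b_k$, all branches collapse to a single limit in the process.
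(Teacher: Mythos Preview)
Your proposal is correct and is precisely the argument the paper has in mind: the paper states that this lemma ``follows immediately from Corollary~\ref{corHI} to Harnack inequality'' without further detail, and you have simply spelled out that immediate step via Arzel\`a--Ascoli applied to the uniformly H\"older envelopes $a_{\eps_k},b_{\eps_k}$ whose gap collapses as $m_0(k)\to\infty$. Your handling of the multivaluedness of $\tilde g_k$ is also exactly the intended point.
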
 

From here on $\tilde g_\infty$ will denote the function from Lemma \ref{ginfty}.
\begin{lem} \label{limitsol}The function $\tilde g_\infty$ satisfies the linearized problem \eqref{linear} in $B_{1/2}$.
\end{lem}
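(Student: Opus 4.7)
The proof has two parts: establishing the interior equation $\Delta(U_n \tilde g_\infty) = 0$ in $B_{1/2}\setminus P$, and verifying the boundary condition at points of $L$. For the interior equation, I would start from the defining relation $U(X) = g_k(X - \eps_k \tilde g_k(X)e_n)$ and Taylor expand $U$ about $X$, obtaining $g_k(X) - U(X) = \eps_k \tilde g_k(X)\, U_n(X) + O(\eps_k^2)$, uniformly on any compact $K \subset B_{1/2}\setminus P$. For $k$ large, $K \subset B_1^+(g_k)$, so both $g_k$ and $U$ are harmonic on $K$, and hence $(g_k - U)/\eps_k$ is harmonic there. Using the uniform convergence of the graphs $A_k$ to that of $\tilde g_\infty$ from Lemma \ref{ginfty}, passing to the limit yields harmonicity of $U_n \tilde g_\infty$ on $K$, and hence in $B_{1/2}\setminus P$.

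For the boundary condition I would argue by contradiction. Suppose $\phi$, continuous near $X_0 = (x_0', 0, 0) \in L \cap B_{1/2}$ and satisfying the expansion of Definition \ref{linearsol}(ii) with $b(X_0) > 0$, touches $\tilde g_\infty$ from below at $X_0$. Replacing $\phi$ by $\phi - \rho|X - X_0|^2$ for small $\rho > 0$ makes the touching strict while preserving $\phi(X_0)$, $a(X_0)$ and $b(X_0)$, since the perturbation is absorbed into the $O$-term. By the Hausdorff convergence of $A_k$ to the graph of $\tilde g_\infty$, there exist $c_k \to 0$ and $X_k \to X_0$ such that $\phi + c_k \leq \tilde g_k$ on a small punctured neighborhood of $X_0$ in $B_1 \setminus P$, with equality at $X_k$. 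Define $\varphi_k$ via the inverse relation \eqref{deftilde2} associated to the Lipschitz function $\phi + c_k$. By the implication \eqref{gtildeg}, the inequality $\phi + c_k \leq \tilde g_k$ translates to $\varphi_k \leq g_k$ on a slightly smaller neighborhood, with equality at $Y_k := X_k - \eps_k(\phi+c_k)(X_k)e_n$.

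The key claim is that $\varphi_k$ is a strict comparison subsolution to \eqref{FB} for $k$ large. Its free boundary is a smooth hypersurface of the form $\{y_n = -\eps_k(\phi+c_k)(y',0,0)\}$ in $\mathcal{B}_1$. A direct computation in polar coordinates in the $(x_n,z)$-plane, analogous to the model case $\phi(X) = br$ where one finds $\varphi_k(\tilde y_1,0) = (1-\eps_k b)^{-1/2}\sqrt{\tilde y_1}$, shows that at each free boundary point the expansion $\varphi_k(Y) = \alpha\, U((Y - Y_\ast)\cdot\nu, \zeta) + o(|Y - Y_\ast|^{1/2})$ holds with leading coefficient $\alpha = 1 + \frac{b}{2}\eps_k + o(\eps_k) > 1$ (for $k$ large), precisely because $b > 0$. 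Since $\varphi_k$ is $U$ composed with a near-identity diffeomorphism in its positive set, $\Delta\varphi_k = O(\eps_k)$ there, and a further correction $-\delta_k|X - X_0|^2$ to $\phi$ (with $\delta_k \to 0$ slowly) can be introduced to enforce strict subharmonicity without destroying $\alpha > 1$. Thus $\varphi_k$ is a strict comparison subsolution touching $g_k$ from below at $Y_k$; by the strong maximum principle $Y_k \notin B_1^+(g_k)$, so $Y_k \in F(g_k)$, contradicting the viscosity solution property of $g_k$. The symmetric case $b < 0$ (touching from above) is handled analogously.

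The principal difficulty is verifying rigorously that $b > 0$ in the expansion of $\phi$ transfers to the strict inequality $\alpha > 1$ in the expansion of $\varphi_k$ at each of its free boundary points. Geometrically the picture is clear---the $br$-term "radially expands" the level sets of $U$ in the $(x_n, z)$-plane---but making this precise via the implicit definition of $\varphi_k$ and the polar-coordinate asymptotics of $U$ is the main technical work.
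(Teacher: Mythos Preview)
Your treatment of the interior equation is correct and more direct than the paper's. On any compact $K\subset B_{1/2}\setminus P$ the flatness bound yields $|g_k-U|\le C_K\eps_k$; interior estimates for the harmonic functions $g_k$ then give $\partial_n g_k\to U_n>0$ uniformly on a neighborhood of $K$, so $\tilde g_k$ is single-valued there for large $k$ and $(g_k-U)/\eps_k\to U_n\tilde g_\infty$ uniformly on $K$. Since $(g_k-U)/\eps_k$ is harmonic, so is the limit. The paper instead argues in viscosity form: it touches $\tilde g_\infty$ by a smooth $\tilde\varphi$, pulls back via \eqref{deftilde2} to a $\varphi_k$ touching $g_k$, and computes $\Delta\varphi_k$ through the change of variables $Y=X-\eps_k\tilde\varphi_k(X)e_n$, obtaining $\Delta\varphi_k(Y)=\eps_k\,\Delta(U_n\tilde\varphi)(X)+O(\eps_k^2)$ away from $P$.

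For the boundary condition your approach has a genuine gap. The test function $\phi$ in Definition~\ref{linearsol} is only assumed continuous with the stated expansion, so the free boundary $\{y_n=-\eps_k(\phi+c_k)(y',0,0)\}$ of your $\varphi_k$ need not be $C^2$ as Definition~\ref{defsub} requires. More seriously, even after replacing $\phi$ by something smooth, verifying $\Delta\varphi_k\ge 0$ in $B^+(\varphi_k)$ \emph{up to the free boundary} is exactly the hard step: the paper's own change-of-variables computation gives $\Delta\varphi_k(Y)=\eps_k U_n\Delta\tilde\varphi+2\eps_k\nabla\tilde\varphi\cdot\nabla U_n+O(\eps_k^2)(U_n+U_{nn})$, and the error $O(\eps_k^2)U_{nn}\sim\eps_k^2 r^{-3/2}$ is not controlled by any correction of the form $-\delta_k|X-X_0|^2$ as $r\to0$. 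The paper bypasses this entirely. It first replaces $\phi$ from below by an explicit polynomial $q(X)=\phi(X_0)-\tfrac{\alpha}{2}|x'-y_0'|^2+2\alpha(n-1)x_nr$, then invokes the prefabricated radial barriers $v_R$ of Proposition~\ref{sub}, whose comparison-subsolution property (including subharmonicity up to $F(v_R)$ and the free-boundary expansion) is verified by explicit computation in the Appendix, and whose $\eps_k$-domain variation agrees with $q$ up to $O(\eps_k)$ by \eqref{estvr}. The contradiction is then drawn from the comparison principle for domain variations, Lemma~\ref{linearcomp}, rather than from the raw viscosity condition at a single touching point. In short, the transfer ``$b>0\Rightarrow\varphi_k$ is a strict comparison subsolution'' that you yourself flag as the principal difficulty is not carried out, and the machinery of Proposition~\ref{sub} and Lemma~\ref{linearcomp} exists precisely so that one never has to carry it out for a general $\phi$.
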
 
\begin{proof}
We start by showing that $U_n \tilde g_\infty$ is harmonic in $B_{1/2} \setminus P. $ 

Let $\tilde \varphi$ be a $C^2$ function which touches $\tilde g_\infty$ strictly by below at $X_0 \in B_{1/2} \setminus P.$ We need to show that 
\begin{equation}\label{des} \Delta (U_n \tilde \varphi)(X_0) \leq 0.
\end{equation}
Since by the previous lemma, the sequence $A_k$ converges uniformly to $A_\infty$ in $B_{1/2} \setminus P$ we conclude that there exist a sequence of constants $c_k \rightarrow 0$ and a sequence of points $X_k \in B_{1/2} \setminus P$, $X_k \rightarrow X_0$ such that $\tilde \varphi_k := \tilde \varphi + c_k$ touches $\tilde g_k$ by below at $X_k $ for all $k$ large enough. 

Define the function $\varphi_k$ by the following identity
\begin{equation}\label{varphi}\varphi_k (X- \eps_k \tilde{\varphi_k}(X) e_n) = U(X). \end{equation}

Then according to \eqref{gtildeg} $\varphi_k$ touches $g_k$ by below at $Y_k = X_k - \eps_k \tilde \varphi_k(X_k)e_n \in B_1^+(g_k),$ for $k$ large enough. Thus, since $g_k$ satisfies \eqref{FB} in $B_1$ it follows that
\begin{equation}\label{sign}
\Delta \varphi_k(Y_k) \leq 0.
\end{equation}

Let us compute $\Delta \varphi_k (Y_k).$ Since $\tilde \varphi$ is smooth, for any $Y$ in a neighborhood of $Y_k$ we can find a unique $X=X(Y)$ such that
\begin{equation}\label{Y}Y= X- \eps_k \tilde \varphi_k (X) e_n.\end{equation}
Thus \eqref{varphi} reads 
$$\varphi_k (Y) = U(X(Y)),$$with $Y_i=X_i$ if $i\neq n$ and $$\frac{\p X_j}{\p Y_i} = \delta_{ij}, \quad \textrm{when $j\neq n$}.$$ 
Using these identities we can compute that

\begin{equation}\label{Deltavarphi}\Delta \varphi_k (Y) = U_n(X) \Delta X_n(Y) + \sum_{j \neq n} (U_{jj}(X) + 2 U_{jn}(X) \frac{\p X_n}{\p Y_j} )+ U_{nn}(X) |\nabla X_n|^2(Y).\end{equation}

From \eqref{Y} we have that
$$D_X Y = I - \eps_k D_X (\tilde \varphi_k e_n).$$
Thus, since $\tilde \varphi_k = \tilde \varphi +c_k$
$$D_Y X = I + \eps_k  D_X (\tilde \varphi e_n)+ O(\eps_k^2), $$
with a constant depending only on the $C^2$-norm of $\tilde \varphi.$

It follows that 

\begin{equation}\label{pxn} \frac{\p X_n}{\p Y_j} = \delta_{jn} + \eps_k \p_j\tilde \varphi (X) + O(\eps_k^2). \end{equation}

Hence

\begin{equation}\label{nablaxn}|\nabla X_n|^2 (Y)= 1+ 2\eps_k \p_n \tilde \varphi (X)+ O(\eps_k^2), \end{equation}

and also, 

$$\frac{\p^2 X_n}{\p Y_j^2} = \eps_k \sum_{i} \p_{ji} \tilde \varphi \frac{\p X_i}{\p Y_j}+ O(\eps_k^2)= \eps_k \sum_{i \neq n} \p_{ji} \tilde \varphi \delta_{ij} + \eps_k \p_{jn}\tilde \varphi \frac{\p X_n}{\p Y_j} + O(\eps_k^2),$$
from which we obtain that
\begin{equation}\label{deltaxn}\Delta X_n = \eps_k \Delta \tilde \varphi + O(\eps_k^2).\end{equation}

Combining \eqref{Deltavarphi} with \eqref{nablaxn} and \eqref{deltaxn} we get that

$$\Delta \varphi_k (Y) =\Delta U(X) +\eps_k U_n \Delta \tilde \varphi  +2\eps_k \nabla \tilde \varphi \cdot \nabla U_n + O(\eps_k^2)(U_n(X) + U_{nn}(X)).$$

Using \eqref{sign} together with the fact that $U$ is harmonic at $X_k$ we conclude that 

$$0 \geq  \Delta (U_n \tilde \varphi) (X_k) + O(\eps_k^2)(U_n(X_k) + U_{nn}(X_k)).$$

The desired inequality \eqref{des} follows by letting $k \rightarrow +\infty.$

Next we need to show that $$|\nabla_r \tilde g_\infty |(X_0)= 0, \quad X_0=(x'_0,0,0) \in B_{1/2} \cap L,$$ in the viscosity sense of Definition \ref{linearsol}. 

Assume by contradiction that there exists a function $\phi$ which  touches $\tilde g_\infty$ by below at $X_0=(x'_0,0,0) \in B_{1/2} \cap L$ and such that $$\phi(X) = \phi(X_0) + a(X_0)\cdot (x' - x'_0) + b(X_0) r + O(|x'-x'_0|^2 + r^{3/2}), $$ with $$b(X_0) >0.$$   

Then we can find constants $\alpha,  \delta, \bar r$  and a point $Y'=(y'_0,0,0) \in B_2$ depending on $\phi$ such that the polynomial

$$q(X)=\phi(X_0) - \frac{\alpha}{2}|x'-y'_0|^2 + 2\alpha (n-1)x_n r$$
 touches $\phi$ by below at $X_0$  in a tubular neighborhood  $N_{\bar r}= \{|x'-x'_0|\leq \bar r, r \leq \bar r\}$ of $X_0,$ with 
 
 $$\phi- q \geq \delta>0, \quad \text{on $N_{\bar r} \setminus N_{\bar r/2}$.}$$ This implies that
\begin{equation}\label{second}
\tilde g_\infty - q \geq \delta>0, \quad \text{on $N_{\bar r} \setminus N_{\bar r/2}$,}\end{equation}
and 
\begin{equation}\label{third}
\tilde g_\infty (X_0)- q(X_0) =0.\end{equation}
In particular,\begin{equation}\label{thirdprime}
|\tilde g_\infty (X_k)- q(X_k)| \rightarrow 0, \quad X_k \in N_{\bar r} \setminus P, X_k \rightarrow X_0. \end{equation}

Now, let us choose $R_k=1/(\alpha \eps_k)$ and let us define
$$w_{k}(X) = v_{R_k}(X-Y'+\eps_k \phi(X_0) e_n), \quad Y'=(y'_0,0,0),$$
with $v_R $ the function defined in Proposition \ref{sub}. Then the $\eps_k$-domain variation of $w_k$, which we call $\tilde w_k$,  can be easily computed from the definition
$$w_k(X - \eps_k \tilde w_k(X)e_n)=U(X).$$
Indeed, since $U$ is constant in the $x'$-direction, this identity is equivalent to
$$v_{R_k}(X-Y'+\eps_k \phi(X_0) e_n - \eps_k \tilde w_k(X)e_n) = U(X-Y'),$$
which in view of Proposition \ref{sub} gives us
$$\tilde v_{R_k}(X-Y') = \eps_k(\tilde w_k(X) -\phi(X_0)).$$
From the choice of $R_k$, the formula for $q$ and \eqref{estvr}, we then conclude that
$$\tilde w_k (X) = q(X) + \alpha^2 \eps_k O(|X-Y'|^2),$$ and hence
\begin{equation}\label{first} |\tilde w_k - q| \leq C\eps_k \quad \text{in $N_{\bar r} \setminus P.$}\end{equation}
Thus, from the uniform convergence of $A_k$ to $A_\infty$ and \eqref{second}-\eqref{first} we get that for all $k$ large enough
\begin{equation}\label{fcont}
\tilde g_k - \tilde w_k \geq \frac \delta 2 \quad \text{in $(N_{\bar r} \setminus N_{\bar r/2}) \setminus P.$} 
\end{equation}
Similarly, from the uniform convergence of $A_k$ to $A_\infty$ and \eqref{first}-\eqref{thirdprime} we get that for $k$ large
\begin{equation}\label{scont}
\tilde g_k(X_k) - \tilde w_k(X_k) \leq \frac \delta 4,  \quad \text{ for some sequence $X_k \in N_{\bar r} \setminus P, X_k \rightarrow X_0.$}\end{equation} 

On the other hand, it follows from Lemma \ref{linearcomp} and \eqref{fcont} that
\begin{equation*}
\tilde g_k - \tilde w_k \geq \frac \delta 2 \quad \text{in $N_{\bar r} \setminus P,$} 
\end{equation*} which contradicts \eqref{scont}.
\end{proof}

The lemmas above allow us to reduce the regularity question for our free boundary problem \eqref{FB} to the regularity of the linear problem \eqref{linear}. We will analyze such question in the next section and we will consequently obtain the following lemma, which we use here to conclude the proof of our improvement of flatness Theorem.

\begin{lem}\label{reg} There exists a universal constant $\rho>0$ such that $\tilde g_\infty$ satisfies \begin{equation}\label{boundlin}  a_0 \cdot x' - \frac{1}{8} \rho\leq  \tilde g_\infty(X) \leq a_0 \cdot x' + \frac{1}{8} \rho \quad \text{in $B_{2\rho},$} \end{equation}  for a vector $a_0 \in \R^{n-1}.$
\end{lem}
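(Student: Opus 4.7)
The plan is to apply Theorem \ref{lineimpflat3} (improvement of flatness for the linearized problem) to the limit function $\tilde g_\infty$. By Lemma \ref{limitsol}, $\tilde g_\infty$ is a viscosity solution of \eqref{linear} on $B_{1/2}$, and by Corollary \ref{corHI} together with Lemma \ref{ginfty} it is H\"older continuous on $B_{1/2}$ with $|\tilde g_\infty| \leq 1$. Using $U(\lambda X) = \lambda^{1/2} U(X)$, hence $U_n(\lambda X) = \lambda^{-1/2} U_n(X)$, a direct check shows that the rescaling $w(X) \mapsto w(\lambda X)$ preserves both conditions in \eqref{linear}. So I rescale $B_{1/2}$ to $B_1$, apply Theorem \ref{lineimpflat3}, and unscale, obtaining $a_0 \in \R^{n-1}$ and a universal constant $C$ with
\[ |\tilde g_\infty(X) - \tilde g_\infty(0) - a_0 \cdot x'| \leq C|X|^{3/2}, \qquad X \in B_{1/4}. \]

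Next I need to pin down $\tilde g_\infty(0) = 0$. This is where the normalization $0 \in F(g_k)$, maintained throughout Lemma \ref{ginfty}, finally pays off. For each $k$, the flatness \eqref{flatimp_k} and standard regularity for flat solutions give that $F(g_k)$ is, near the origin, a Lipschitz graph $x_n = \psi_k(x')$ with $|\psi_k| \leq \eps_k$ and $\psi_k(0) = 0$, together with a uniform (in $k$) expansion of the form
\[ g_k(0, x_n, 0) = x_n^{1/2}\bigl(1 + O(x_n^\alpha)\bigr), \qquad x_n \to 0^+. \]
Choosing the test point $X_k = \eps_k e_n$, the defining relation $U(X_k) = g_k(X_k - \eps_k \tilde g_k(X_k) e_n)$ reduces to $\eps_k^{1/2} = g_k(0, \eps_k(1 - \tilde g_k(X_k)), 0)$; solving with the expansion above yields $\tilde g_k(X_k) = O(\eps_k^\alpha) \to 0$. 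Combined with the uniform Hausdorff convergence $A_k \to A_\infty$ from Lemma \ref{ginfty} and the H\"older continuity of $\tilde g_\infty$, this forces $\tilde g_\infty(0) = 0$.

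With $\tilde g_\infty(0) = 0$ in hand, I pick the universal $\rho > 0$ small enough that $C(2\rho)^{3/2} \leq \rho/8$ (any $\rho \leq (16\sqrt{2}\,C)^{-2}$ will do), and then the estimate restricted to $B_{2\rho}$ immediately gives
\[ a_0 \cdot x' - \frac{\rho}{8} \leq \tilde g_\infty(X) \leq a_0 \cdot x' + \frac{\rho}{8}, \qquad X \in B_{2\rho}, \]
which is exactly \eqref{boundlin}.

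The main obstacle is the justification of $\tilde g_\infty(0) = 0$: this requires a quantitative expansion of $g_k$ along the $e_n$-axis through the free-boundary point $0$ with error decaying fast enough, uniformly in $k$, that the defining equation can be inverted to produce $\tilde g_k(X_k) \to 0$. One must also take care that $\tilde g_k$ is only defined off $P$ and that $\tilde g_\infty$ is extended to $P$ by continuity, so the evaluation "at the origin" is really a limit along a sequence $X_k \to 0$ with $X_k \notin P$.
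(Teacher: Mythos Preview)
Your overall strategy is the right one and coincides with the paper's: apply Theorem \ref{lineimpflat3} (after a harmless rescaling from $B_{1/2}$ to $B_1$) to obtain
\[
|\tilde g_\infty(X)-\tilde g_\infty(0)-a_0\cdot x'|\le C|X|^{3/2},
\]
then use $\tilde g_\infty(0)=0$ and pick $\rho$ universal with $C(2\rho)^{3/2}\le \rho/8$. The paper does not spell out a separate proof of Lemma \ref{reg}; it is meant to be an immediate consequence of Theorem \ref{lineimpflat3} together with $\tilde g_\infty(0)=0$.

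The gap is in your justification of $\tilde g_\infty(0)=0$. You invoke ``standard regularity for flat solutions'' to assert that $F(g_k)$ is a Lipschitz graph near $0$ and that $g_k$ admits a uniform expansion $g_k(0,x_n,0)=x_n^{1/2}(1+O(x_n^\alpha))$. This is circular: the entire paper, and in particular Theorem \ref{iflat} whose proof uses Lemma \ref{reg}, is devoted to establishing exactly this kind of regularity for flat solutions. At this stage nothing is known about $F(g_k)$ beyond the crude trapping coming from \eqref{flatimp_k}, and no pointwise expansion of $g_k$ at $0$ is available.

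The correct argument is elementary and uses only $0\in F(g_k)$, the flatness \eqref{flatimp_k}, and the Hausdorff convergence of Lemma \ref{ginfty}. Suppose $\tilde g_\infty(0)=c>0$. By H\"older continuity of $\tilde g_\infty$ and the convergence $A_k\to A_\infty$, for $k$ large one has $\tilde g_k\ge c/2$ on $B_\delta\setminus P$ for some $\delta>0$. With $v(X)=U(X+\tfrac{c}{2}\eps_k e_n)$ one has $\tilde v_{\eps_k}\equiv c/2\le \tilde g_k$ on $B_\delta\setminus P$, hence by Lemma \ref{elem} $v\le g_k$ on $B_{\delta-\eps_k}$; evaluating at $X=0$ gives $\sqrt{c\eps_k/2}\le g_k(0)=0$, a contradiction. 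If $c<0$, use that $0\in F(g_k)$ provides points $x_m\to 0$ with $g_k(x_m,0)>0$; for each such point the flatness yields $X_m\in B_\delta\setminus P$ with $z$-coordinate $0$, $(X_m)_n>0$, and $X_m=(x_m,0)+\eps_k\tilde g_k(X_m)e_n$. Since $\tilde g_k(X_m)<c/2<0$, letting $m\to\infty$ forces $(X_m)_n<0$, again a contradiction. Thus $\tilde g_\infty(0)=0$ without any appeal to regularity of $F(g_k)$.
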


 We are now ready to prove our main Theorem, by combining all of the lemmas above.
 
 \
 
 \textit{Proof of Theorem \ref{iflat}.} 
Let $\rho$ be the universal constant from Lemma \ref{reg} and assume by contradiction that we
can find a sequence $\eps_k \rightarrow 0$ and a sequence $g_k$ of
solutions to \eqref{FB} in $B_1$ such that $g_k$ satisfies \eqref{flatimp},
i.e.
 \begin{equation}\label{flatimp_k2}U(X - \eps_k e_n) \leq g_k(X) \leq U(X+\eps_k e_n) \quad \textrm{in $B_1$,}\end{equation}
 but it does not satisfy the conclusion of the Theorem.
 
Denote by  $\tilde g_k$ the  $\eps_k$-domain variation of $g_k$.
Then by Lemma \ref{ginfty} the sequence of sets
$$A_k := \{(X, \tilde g_k (X)) : X \in B_{1-\eps_k} \setminus P\},$$ 
converges uniformly (up to extracting a subsequence) in $B_{1/2} \setminus P$ to the graph $$A_\infty := \{(X,\tilde g_\infty(X)) : X \in B_{1/2} \setminus P\},$$ where $\tilde g_\infty$ is  a Holder continuous function in $B_{1/2}$. By Lemma \ref{limitsol}, the function $\tilde g_\infty$ solves the linearized problem \eqref{linear} and hence by Lemma \ref{reg} $\tilde g_\infty$ satisfies 
\begin{equation}\label{bound2}  a_0 \cdot x' - \frac{1}{8} \rho\leq  \tilde g_\infty(X) \leq a_0 \cdot x' + \frac{1}{8} \rho \quad \text{in $B_{2\rho},$}\end{equation} 
 with $a_0 \in \R^{n-1}$.

From the uniform convergence of $A_k$ to $A_\infty$, we get that for all $k$ large enough
\begin{equation}\label{boundnew}  a_0 \cdot x' - \frac{1}{4} \rho\leq  \tilde g_k(X) \leq a_0 \cdot x' + \frac{1}{4} \rho \quad \text{in $B_{2\rho} \setminus P,$}\end{equation}  and hence from Lemma \ref{seclem},  the $g_k$ satisfy the conclusion of our Theorem (for $k$ large). We have thus reached a contradiction.
\qed

\section{The regularity of the linearized problem.}

The purpose of this section is to prove an improvement of flatness result for viscosity solutions to the linearized problem associated to \eqref{FB}, that is
\begin{equation}\label{linear2}\begin{cases} \Delta (U_n w) = 0, \quad \text{in $B_1 \setminus P,$}\\ |\nabla_r w|=0, \quad \text{on $B_1\cap L$,}\end{cases}\end{equation}
where we recall that
for  $X_0=(x'_0,0,0) \in B_1 \cap L,$ we set
$$|\nabla_r w| (X_0) := \di\lim_{(x_n,z)\rightarrow (0,0)} \frac{w(x'_0,x_n, z) - w (x'_0,0,0)}{r}, \quad   r^2=x_n^2+z^2 .$$

We remark that if we restrict this linear problem to the class of functions $w(X)=\tilde w(x',r)$ that depend only on $(x',r)$ then the problem reduces to the classical Neumann problem 

$$\begin{cases} \Delta \tilde w = 0, \quad \text{in $B_1^+,$}\\ \tilde w_r=0, \quad \text{on $\{r=0\}$.}\end{cases}$$

The following is our main theorem.

\begin{thm}\label{class} Given a boundary data $\bar h \in C(\p B_1), |\bar h| \leq 1,$ which is even with respect to $\{z=0\}$, there exists a unique classical solution $h$ to \eqref{linear2} such that $h \in C(\overline{B}_1)$, $h = \bar h$ on $\p B_1$, $h$ is even with respect to $\{z=0\}$ and it satisfies
\begin{equation}\label{mainh}|h(X) - h(X_0) -  a' \cdot (x'- x'_0)| \leq C (|x'-x'_0|^2 + r^{3/2}), \quad X_0 \in B_{1/2} \cap L,\end{equation} for a universal constants $C$ and a vector $a' \in \R^{n-1}$ depending on $X_0.$
\end{thm}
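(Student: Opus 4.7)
\emph{Existence and uniqueness.} I would construct $h$ by regularization. For smooth $\bar h$, a classical solution can be obtained by minimizing the weighted Dirichlet energy $\int_{B_1}U_n^{\,2}|\nabla w|^2\,dX$ (whose Euler--Lagrange equation $\operatorname{div}(U_n^{\,2}\nabla w)=0$ coincides with $\Delta(U_n w)=0$ on $B_1\setminus P$, because $U_n$ is harmonic there), or via Perron's method. For merely continuous $\bar h$, one mollifies to $\bar h^\varepsilon\in C^\infty(\p B_1)$, constructs $h^\varepsilon$, and extracts a limit via Arzel\`a--Ascoli using the maximum principle for a uniform $L^\infty$-bound and Theorem~\ref{lineimpflat3} rescaled to small balls centered on $L$ for uniform equicontinuity up to $L$. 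Here one uses the scale invariance of \eqref{linear2} under $X\mapsto X_0+\rho X$ with $X_0\in L$, which follows from $U_n$ being $-\tfrac12$-homogeneous. Uniqueness is by comparison: $w=h_1-h_2$ solves \eqref{linear2} with zero boundary data, and must vanish by the strong maximum principle for $U_n w$ in $B_1\setminus P$, by Lemma~\ref{rk} applied at points of $P\setminus L$, and by touching with test functions of the form $\phi=c\pm\delta r$ (which have $b(X_0)=\mp\delta\neq 0$ in the sense of Definition~\ref{linearsol}) at interior extrema on $L$.

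\emph{Regularity via tangential bootstrap.} Because \eqref{linear2} is invariant under translations in the $x'$-directions, the difference quotients $D_i^\tau h(X)=\tau^{-1}(h(X+\tau e_i)-h(X))$, $i<n$, are classical solutions on $B_{1-|\tau|}$. Theorem~\ref{lineimpflat3}, rescaled so that $Y_0\in L\cap B_{1/2}$ maps to the origin of a unit ball, gives
\[
|h(X)-h(Y_0)-a(Y_0)\cdot(x'-y'_0)|\le C|X-Y_0|^{3/2},
\]
which yields a uniform Lipschitz bound on $h$ in the $x'$-direction near $L$; the $D_i^\tau h$ are thus uniformly bounded, and letting $\tau\to 0$ produces $\partial_{x_i}h$ as a classical solution of \eqref{linear2}. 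Iterating once more, the mixed tangential derivatives $\partial_{x_i}\partial_{x_j}h$ ($i,j<n$) are bounded classical solutions, which gives $h|_{L\cap B_{1/2}}\in C^{1,1}$ in $x'$ and hence the quadratic tangential estimate
\[
|h(x',0,0)-h(X_0)-a'(X_0)\cdot(x'-x'_0)|\le C|x'-x'_0|^2,\qquad a'(X_0):=\nabla_{x'}h(X_0).
\]

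\emph{Assembling the estimate, and the main obstacle.} For the transverse contribution I would apply Theorem~\ref{lineimpflat3} once more at the base point $(x',0,0)\in L\cap B_{3/4}$ (on a ball of fixed size, after translation and rescaling): the linear-in-$x'$ term $\tilde a\cdot 0$ vanishes because $X=(x',x_n,z)$ shares its tangential coordinate with $(x',0,0)$, leaving $|h(X)-h(x',0,0)|\le C r^{3/2}$. The triangle inequality combined with the tangential bound above produces \eqref{mainh}. The main technical difficulty is closing the tangential bootstrap: one must verify that the viscosity condition $|\nabla_r w|=0$ on $L$ of Definition~\ref{linearsol} is preserved under the limit of difference quotients, i.e.\ that the restrictive class of admissible test functions at $L$ (those with a genuine nonzero radial coefficient $b(X_0)$) is stable under smooth tangential perturbations. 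This in turn relies on the $C^{3/2}$-modulus from Theorem~\ref{lineimpflat3} being uniform on compact subsets of $L$, so that $\partial_{x_i}h$---initially only a pointwise limit of classical solutions---inherits both the equation and the boundary condition, and is therefore again a viscosity (hence classical) solution to which Theorem~\ref{lineimpflat3} can be re-applied.
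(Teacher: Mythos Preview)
Your argument is circular. You invoke Theorem~\ref{lineimpflat3} repeatedly---for equicontinuity in the existence step, for the initial $C^{3/2}$ expansion that launches the tangential bootstrap, and again for the transverse estimate $|h(X)-h(x',0,0)|\le Cr^{3/2}$. But in the paper's logical structure Theorem~\ref{lineimpflat3} (restated and proved in Section~8 as Theorem~\ref{lineimpflat}) is a \emph{corollary} of Theorem~\ref{class}: its proof produces the classical solution $h$ with boundary data $w$ supplied by Theorem~\ref{class}, and then shows $w=h$. So you are assuming precisely the $C^{3/2}$ regularity at $L$ that Theorem~\ref{class} is meant to establish.

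The paper avoids this loop by building the regularity theory for minimizers of $J(h)=\int U_n^2|\nabla h|^2$ from scratch. Interior $C^\alpha$ up to $L$ comes from a Harnack inequality (Lemma~\ref{HImin}) proved by comparison with the explicit minimizer $v(X)=-\frac{|x'|^2}{n-1}+2x_nr$; this, together with translation invariance in $x'$, gives $D_{x'}^\beta h\in C^\alpha$ (Corollary~\ref{derivativeH}), so $\Delta_{x'}h$ is bounded. The decisive step---which your proposal has no substitute for---is the reduction, at each fixed $x'$, to the two-dimensional slit problem $\Delta_{t,z}(U_t h)=U_t f$ with $f\in C^\alpha$, followed by the holomorphic change of variables $(s,y)\mapsto(t,z)=(\tfrac12(s^2-y^2),sy)$ that unfolds the slit into a half-plane. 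Classical Schauder theory there yields $|H-aU|\le C r^{1/2}U$, hence $|h(x',x_n,z)-h(x',0,0)-b(x')r|\le Cr^{3/2}$; the characterization Lemma~\ref{char} then forces $b\equiv 0$ for the minimizer, and the tangential $C^\infty$ regularity finishes \eqref{mainh}. Your outline identifies the right tangential mechanism but lacks an independent source for the transverse $r^{3/2}$ bound.
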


As a corollary of the theorem above we obtain the desired regularity result, as stated also in Section 3.

\begin{thm}[Improvement of flatness]\label{lineimpflat} There exists a universal constant $C$ such that if $w$ is a viscosity solution to \eqref{linear2} in $B_1$ with $$-1 \leq w(X) \leq 1\quad \text{in $B_1,$}$$ then  \begin{equation}\label{boundlin2}  a_0 \cdot x' -C|X|^{3/2} \leq w(X) - w(0)\leq a_0 \cdot x' + C |X|^{3/2},\end{equation}for some vector $a_0\in \R^{n-1}$.\end{thm}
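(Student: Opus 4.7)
The plan is to deduce Theorem \ref{lineimpflat} from Theorem \ref{class} by constructing a classical companion $h$ to the viscosity solution $w$ with the same boundary data, showing $w\equiv h$ via a comparison principle, and then invoking the pointwise expansion \eqref{mainh} of $h$ at the origin. To begin, set $\bar h := w|_{\p B_1}$; since $|w|\leq 1$ and $w$ is even in $z$, Theorem \ref{class} provides a classical solution $h\in C(\ov{B}_1)$ to \eqref{linear2} with $h=\bar h$ on $\p B_1$, even in $z$, and satisfying \eqref{mainh} at every $X_0\in B_{1/2}\cap L$, with universal constant $C$ and a vector $a'(X_0)\in\R^{n-1}$.

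Next, I would prove $w\equiv h$ in $B_1$ by a comparison argument. Suppose for contradiction $M:=\sup_{B_1}(w-h)>0$, attained at some interior $X^*\in B_1$ (since $w=h$ on $\p B_1$). If $X^*\in B_1\setminus P$, the function $U_n(w-h)$ is harmonic in the connected open set $B_1\setminus P$, vanishes on $\p B_1$ (where $w=h$) and on $P\cap B_1$ (where $U_n\equiv 0$), and is positive at $X^*$; the strong maximum principle then forces a contradiction, once the mild singularity of $U_n$ near $L$ is handled by a removable-singularity/boundary-Harnack argument. If $X^*\in P\setminus L$, Lemma \ref{rk} applies directly. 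If $X^*=(\xi,0,0)\in L$, one perturbs $h$ by a test function
\begin{equation*}
\phi(X) := h(X) + M - \eta r + \delta|x'-\xi|^2,
\end{equation*}
choosing $\eta,\delta>0$ small so that, after a small constant adjustment, $\phi$ touches $w$ strictly from above at some $\bar X\in L$ close to $X^*$; the expansion of $\phi$ at $\bar X$, read off from \eqref{mainh} for $h$, is of the form required by Definition \ref{linearsol}(ii) with coefficient $b=-\eta<0$, producing the contradiction. The symmetric argument applied to $\inf(w-h)$ closes $w\equiv h$.

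Finally, apply \eqref{mainh} to $h=w$ at $X_0=0\in B_{1/2}\cap L$ with $a_0:=a'(0)$. For $X\in B_{1/2}$, $r^{3/2}\leq|X|^{3/2}$ and $|x'|^2=|x'|^{3/2}|x'|^{1/2}\leq|X|^{3/2}$, so the right-hand side of \eqref{mainh} is at most $2C|X|^{3/2}$. On $B_1\setminus B_{1/2}$ the left side is bounded by $\|w\|_{L^\infty}+|a_0|$, while $|X|^{3/2}\geq 2^{-3/2}$; since $|a_0|$ is universally bounded by interior $x'$-smoothness of $h$ together with $\|h\|_{L^\infty}\leq 1$, enlarging $C$ yields the estimate on all of $B_1$.

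The main obstacle is the comparison step at points of $L$: since the classical solution $h$ has $b=0$ in its expansion \eqref{mainh} at every point of $L$, it cannot itself play the role of a forbidden test function in Definition \ref{linearsol}(ii), and one must manufacture a strictly signed $b$ by adding the linear-in-$r$ perturbation $-\eta r$, with a localizing quadratic $+\delta|x'-\xi|^2$. The sub-linear remainder $O(r^{3/2})$ in \eqref{mainh} is precisely what allows $-\eta r$ to dominate the error and produce a genuine test function of the required form. The interior case $X^*\in B_1\setminus P$ also requires care, since $U_n(w-h)$ is not bounded near $L$, and the strong maximum principle has to be supplemented by a removable-singularity argument to conclude.
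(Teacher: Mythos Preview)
Your overall strategy—produce the classical solution $h$ from Theorem \ref{class}, show $w\equiv h$ by a comparison argument, and then read off \eqref{mainh} at the origin—is exactly the paper's. The final paragraph (extending the estimate from $B_{1/2}$ to $B_1$ by enlarging $C$) is fine.

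Your worry about the interior case $X^*\in B_1\setminus P$ is unnecessary. Once $M=\max(w-h)>0$ is attained at such an $X^*$, the function $U_n(h+M-w)$ is nonnegative (since $U_n>0$ there), harmonic, and vanishes at the interior point $X^*$; the strong maximum principle in the connected open set $B_1\setminus P$ already forces it to vanish identically, hence $h+M\equiv w$, contradicting $h=w$ on $\p B_1$. No boundary information near $L$ and no removable-singularity argument are needed.

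The genuine gap is in your treatment of $X^*\in L$. Your test function $\phi=h+M-\eta r+\delta|x'-\xi|^2$ does not, after any constant shift, touch $w$ from above at a point of $L$. Along the slice $x'=\xi$ one has $\phi- w=(h+M-w)-\eta r$, and we only know $h+M-w\ge 0$ with equality at $X^*$; there is no lower bound of order $r$, so $\phi$ drops below $w$ as soon as $r>0$. Sliding $\phi+s$ downward therefore produces its first contact at some $\bar X$ with $r>0$, not on $L$. Worse, the localizing term $\delta|x'-\xi|^2$ breaks the equation: $\Delta(U_n|x'-\xi|^2)=2(n-1)U_n$, so $U_n\phi$ is \emph{strictly subharmonic}. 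At a contact point $\bar X\in B_1\setminus P$ the nonnegative function $U_n(\phi+s_0-w)$ then has an interior minimum equal to $0$, which is perfectly compatible with strict subharmonicity (think of $|X|^2$); no contradiction follows. The paper avoids both problems by taking the global perturbation $\bar h_\eps=h-\eps+\eps^2 r$ (equivalently $h+\eps-\eps^2 r$ from above): since $U_n r=\tfrac12 U$, the product $U_n\bar h_\eps$ remains harmonic, so contact in $B_{1/2}\setminus P$ is excluded by the strong maximum principle, contact on $P\setminus L$ by Lemma \ref{rk}, and contact on $L$ by the viscosity condition with $b=\pm\eps^2\neq 0$; letting $\eps\to 0$ gives $w\equiv h$. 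Replacing your $\phi$ by this $\bar h_\eps$ repairs the argument.
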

\begin{proof}
Let $h$ be the unique solution to \eqref{linear2} in $B_{1/2}$ with boundary data $w$. We will prove that $w=h$ in $B_{1/2}$ and hence it satisfies  the desired estimate in view of \eqref{mainh}. Denote by $$\bar h_\eps := h - \eps + \eps^2 r.$$ Then, for $\eps$ small $$\bar h_\eps < w \quad \text{on $\p B_{1/2}$}.$$ We wish to prove that
\begin{equation}\label{Ub} \bar h_\eps \leq w \quad \text{in $B_{1/2}$.}\end{equation}
Now, notice that  $\bar h_\eps$ (and all its translations) is a classical strict subsolution to \eqref{linear2} that is \begin{equation}\label{linear3}\begin{cases} \Delta (U_n \bar h_\eps) = 0, \quad \text{in $B_{1/2} \setminus P,$}\\ |\nabla_r \bar h_\eps|>0, \quad \text{on $B_{1/2}\cap L$.}\end{cases}\end{equation}
Since $w$ is bounded, for $t$ large enough $\bar h_\eps - t$ lies strictly below $w$. We let $t \rightarrow 0$ and show that the first contact point cannot occur for $t \geq 0$. Indeed since $\bar h_\eps -t$ is a strict subsolution which is strictly below $w$ on $\p B_{1/2}$ then no touching can occur either in $B_{1/2} \setminus P$ or on $B_{1/2} \cap L.$ We only need to check that no touching occurs on $ P \setminus L.$ This follows from Lemma \ref{rk}.

Thus \eqref{Ub} holds. 
Passing to the limit as $\eps \rightarrow 0$ we get that $$h \leq w \quad \text{in $B_{1/2}$.}$$ Similarly we also obtain that $$h \geq w \quad \text{in $B_{1/2},$}$$ and the desired equality holds. 
\end{proof}

The existence of the classical solution of Theorem \ref{class} will be achieved via a variational approach in the appropriate weighted Sobolev space. 

We say that $h \in H^1(U_n^2 dX, B_1)$ is a minimizer to the energy functional 
$$J(h) := \int_{B_1} U_n^2 |\nabla h|^2 dX,$$
if
$$J(h) \leq J(h+\phi), \quad \forall \phi \in C_0^\infty(B_1).$$
Since $J$ is strictly convex this is equivalent to
$$\di\lim_{\eps \rightarrow 0} \frac{J(h)-J(h+\eps \phi)}{\eps} =0,  \quad \forall \phi \in C_0^\infty(B_1),$$ which is satisfied if and only if
$$\int_{B_1} U_n^2 \nabla h \cdot \nabla \phi \;dX = 0, \quad  \forall \phi \in C_0^\infty(B_1). $$

As remarked above, if we restrict to the space of functions $h$ which are axis-symmetric with respect to $L$ then the energy above reduces to the Dirichlet energy.

\

We start with a few standard facts about minimizers of $J$. First, $h$ solves the equation
$$\textrm{div}(U_n^2 \nabla h) = 0 \quad \text{in $B_1,$}$$ which is uniformly elliptic in any compact subset of $B_1 \setminus P$ where $U_n$ is bounded. 
In particular $h \in C^{\infty}(B_1 \setminus P),$ and we easily obtain the following lemma.

\begin{lem}\label{Unhisharm} Let $h$ be a minimizer to $J$ in $B_1$, then $$\Delta (U_n h) = 0 \quad \text{in $B_1 \setminus P.$}$$
\end{lem}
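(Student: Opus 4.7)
The plan is to translate the variational equation for $h$ into a classical PDE on $B_1\setminus P$, then expand $\Delta(U_n h)$ by the product rule and use harmonicity of $U_n$ to finish.

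First I would observe that the minimization property
$$\int_{B_1} U_n^2 \nabla h \cdot \nabla \phi \, dX = 0, \qquad \forall\, \phi \in C_0^\infty(B_1),$$
says exactly that $\operatorname{div}(U_n^2 \nabla h)=0$ in the distributional sense in $B_1$. On $B_1\setminus P$ the weight $U_n^2$ is smooth and strictly positive on compact subsets, so the equation is uniformly elliptic there; by standard elliptic regularity (as already noted in the text preceding the lemma, $h\in C^\infty(B_1\setminus P)$), this identity holds in the classical pointwise sense. Expanding the divergence gives
$$U_n^2 \Delta h + 2 U_n\, \nabla U_n\cdot \nabla h = 0 \qquad \text{in } B_1\setminus P,$$
and dividing by $U_n$ (which is strictly positive on this set, away from $P$) yields
\begin{equation}\label{EL-divided}
U_n \Delta h + 2 \nabla U_n \cdot \nabla h = 0 \qquad \text{in } B_1\setminus P.
\end{equation}

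Next I would simply apply the product rule for the Laplacian:
$$\Delta(U_n h) = h\, \Delta U_n + 2\nabla U_n\cdot \nabla h + U_n \Delta h.$$
The last two terms on the right cancel by \eqref{EL-divided}. For the remaining term, recall that $U(X)=U(x_n,z)$ is the explicit harmonic extension given by $r^{1/2}\cos(\theta/2)$ (from \eqref{U}), which is harmonic in $\R^2\setminus\{t\le 0,\,z=0\}$; since $U$ does not depend on $x'$, it is harmonic in $\R^{n+1}\setminus P$. Differentiating the equation $\Delta U = 0$ with respect to $x_n$ (which commutes with the Laplacian on this open set) shows that $\Delta U_n=0$ in $\R^{n+1}\setminus P$. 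Therefore $h\,\Delta U_n=0$, and combining everything gives $\Delta(U_n h)=0$ in $B_1\setminus P$, as claimed.

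There is no real obstacle here: the content of the lemma is an algebraic manipulation, with the only nontrivial ingredient being the classical interior regularity needed to pass from the weak formulation of $\operatorname{div}(U_n^2\nabla h)=0$ to a pointwise identity on $B_1\setminus P$, which is immediate from the smoothness and nondegeneracy of $U_n^2$ on compact subsets of $B_1\setminus P$.
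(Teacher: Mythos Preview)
Your proof is correct and follows essentially the same route as the paper: expand $\operatorname{div}(U_n^2\nabla h)=0$ pointwise on $B_1\setminus P$, divide by $U_n$, and combine with the product rule for $\Delta(U_n h)$ using $\Delta U_n=0$. You are slightly more explicit than the paper in justifying $\Delta U_n=0$ by differentiating $\Delta U=0$, but otherwise the arguments are identical.
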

\begin{proof} Since $h$ is smooth in $B_1 \setminus P$, from
 $$\textrm{div}(U_n^2 \nabla h) = 0 \quad \text{in $B_1,$}$$ we obtain that
 $$U_n^2 \Delta h + 2\sum_{i=1}^{n+1} U_n U_{ni}h_i = 0 \quad \text{in $B_1 \setminus P.$}$$
Since $U_n > 0$ and $\Delta U=0$ in $B_1 \setminus P$ the identity above is equivalent to
$$\Delta(U_n h) = U_n \Delta h + 2 \nabla U_n \cdot \nabla h=0 \quad \text{in $B_1 \setminus P,$}$$ as desired.
\end{proof}

The next lemma contains a characterization of minimizer, which we will be useful later in this section.

\begin{lem}\label{char}Let $h \in C(B_1)$ be a solution to \begin{equation}\Delta (U_n h) = 0 \quad \text{in $B_1 \setminus P,$}\end{equation}
and assume that 
$$\di\lim_{r \rightarrow 0} h_r(x',x_n,z) = b(x'),$$
with $b(x')$  a continuous function. Then $h$ is a minimizer to $J$ in $B_1$ if and only if $b \equiv 0.$\end{lem}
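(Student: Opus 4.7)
The plan is to prove the Euler--Lagrange identity with boundary term
\begin{equation*}
\int_{B_1} U_n^2\, \nabla h \cdot \nabla \phi \, dX \;=\; -c \int_L b(x')\, \phi(x',0,0)\, dx' \qquad \text{for all } \phi \in C_0^\infty(B_1),
\end{equation*}
where $c>0$ is an explicit universal constant (one computes $c = \pi/4$). Both directions of the lemma then follow immediately: if $b \equiv 0$ the first variation of $J$ vanishes at $h$ and strict convexity identifies $h$ as the unique minimizer among competitors with its boundary values; conversely, if $h$ minimizes $J$, the left side is zero for every $\phi$, and since the trace $\phi(\cdot,0,0)$ on $L$ can be prescribed to be any compactly supported smooth function of $x'$, the continuity of $b$ forces $b \equiv 0$.

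To establish the identity, I would excise a thin tubular neighborhood $T_\epsilon = \{X \in B_1 : \mathrm{dist}(X, L) < \epsilon\}$ of $L$ and split $\int_{B_1} = \int_{B_1 \setminus T_\epsilon} + \int_{T_\epsilon}$. The inner piece tends to zero as $\epsilon \to 0$ by Cauchy--Schwarz, using the finiteness of $\int_{B_1} U_n^2 |\nabla h|^2$ (implicit in posing the minimizer question) together with the absolute continuity of the integral. On the outer piece, I would repeat the manipulation of Lemma \ref{Unhisharm} to rewrite $\Delta(U_n h)=0$ in $B_1 \setminus P$ as $\mathrm{div}(U_n^2 \nabla h) = 0$, and then integrate by parts; the bulk integral vanishes by this PDE, leaving only the flux through $\partial T_\epsilon \cap B_1$.

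The flux is then computed in polar coordinates $(x_n,z) = (r\cos\theta, r\sin\theta)$, $\theta \in (-\pi,\pi)$, in which $U_n^2 = \cos^2(\theta/2)/(4r)$, the surface element on $\{r=\epsilon\}$ is $\epsilon\,d\theta\,dx'$, and the outward normal of $B_1 \setminus T_\epsilon$ (pointing into $T_\epsilon$) gives $\partial_\nu h = -h_r$. The singular factor $1/r$ in $U_n^2$ is exactly cancelled by the arclength $\epsilon$, so the flux reads $-\frac{1}{4}\int dx' \int_{-\pi}^\pi \cos^2(\theta/2)\,\phi(x',\epsilon,\theta)\,h_r(x',\epsilon,\theta)\,d\theta$. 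Using the hypothesis that $h_r \to b(x')$ as $r \to 0$ (uniformly in the angular variable $\theta$) together with the continuity of $\phi$, this passes to the limit as $\epsilon \to 0$ to yield $-\frac{\pi}{4}\int_L b(x')\phi(x',0,0)\,dx'$, since $\int_{-\pi}^\pi \cos^2(\theta/2)d\theta = \pi$.

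The hard part is legitimizing the integration by parts across the set $P \setminus L$, where $h$ is only assumed continuous and where $U_n$ itself degenerates. I would address this by further excising a thin slab $S_\delta$ of width $\delta$ around $P$ before integrating by parts, and sending $\delta \to 0$ before $\epsilon \to 0$: the flux through $\partial S_\delta$ vanishes in the limit because $U_n$ has a simple zero transversal to $P \setminus L$ (so $U_n^2$ vanishes quadratically) while the surface area element stays bounded, and no boundary term on $P \setminus L$ survives. Once these two limit passages are made rigorous, the identity above holds and the equivalence stated in the lemma follows.
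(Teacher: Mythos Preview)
Your approach is essentially the same as the paper's: excise a cylinder $C_\delta=\{r<\delta\}$ around $L$ together with a thin slab around $P$, integrate $\mathrm{div}(U_n^2\nabla h)=0$ by parts on the complement, and compute the surviving flux on $\partial C_\delta$ in polar coordinates to obtain $c\int_L b(x')\phi(x',0,0)\,dx'$ (with $c=\pi/4$, or $\pi$ if one drops the factor $1/4$ as the paper does). Both directions of the lemma then follow exactly as you say.

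The one place where your sketch is looser than the paper is the flux across the slab near $P\setminus L$. You argue that $U_n^2$ vanishes quadratically while the surface area stays bounded, but the flux is $\int U_n^2\,\phi\,\partial_\nu h$, so you must also control $|\nabla h|$, which is not a priori bounded as $z\to 0$ on $P\setminus L$. The paper handles this by noting that on $\{|z|=\epsilon\}\cap(B_1\setminus C_\delta)$ one has $U_n\le C\epsilon$ and, since $U_n h$ is harmonic and $U_n$ is smooth there, $|\nabla(U_n h)|,\ |\nabla U_n|\le C$; hence $U_n|\nabla h|\le C$ and the integrand $U_n^2\,\phi\,\partial_\nu h=O(\epsilon)$. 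With this observation in place your argument goes through, and it coincides with the paper's proof.
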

\begin{proof}
By integration by parts and the computation in Lemma \ref{Unhisharm} the identity 
$$\int_{B_1} U_n^2 \nabla h \cdot \nabla \phi \;dX = 0, \quad  \forall \phi \in C_0^\infty(B_1),$$ is equivalent to the following two conditions
\begin{equation}\label{unvharm}\Delta (U_n h) = 0 \quad \text{in $B_1 \setminus P,$}\end{equation}
and
\begin{equation}\label{fbforv}\di\lim_{\delta \rightarrow 0} \int_{\p C_\delta \cap B_1} U_n^2 \phi \nabla h \cdot \nu d\sigma =0,\end{equation}
where $C_\delta$ is the cylinder $ \{r \leq \delta\}$
and $\nu$ the inward unit normal to $C_\delta.$ 

Here we use that 
$$\lim_{\eps \rightarrow 0}  \int_{\{|z|=\eps\} \cap (B_1 \setminus C_\delta)} U_n^2 \phi h_{\nu}  d\sigma =0.
$$

Indeed, in the set $\{|z|=\eps\} \cap (B_1 \setminus C_\delta)$ we have
$$U_n \leq C\eps, $$
and
$$|\nabla (U_nh)|, |\nabla U_n| \leq C,$$
from which it follows that
$$U_n |\nabla h| \leq C.$$

In conclusion we need to show that \eqref{fbforv} is equivalent to $b(x')=0.$

Indeed,
\begin{align*}
\int_{\p C_\delta \cap B_1} U_n^2 \phi \nabla h \cdot \nu d\sigma &=\frac 1 \delta  \int_{\p C_\delta \cap B_1} \cos^2 (\frac \theta 2) h_r \phi d\sigma\\ &= \int_{\p C_1 \cap B_1}  \cos^2 (\frac \theta 2)(h_r \phi)(X', \delta \cos \theta, \delta \sin \theta) dx' d \theta,\end{align*}
hence 
$$\lim_{\delta \rightarrow 0}\int_{\p C_\delta \cap B_1} U_n^2 \phi \nabla h \cdot \nu d\sigma = \pi \int_L b(x') \phi(x',0,0) d x' $$ and our claim clearly follows.
\end{proof}

The next lemma follows by standard arguments, hence we omit its proof.

\begin{lem}[Comparison Principle]\label{mincomp} Let $h_1, h_2$ be minimizers to $J$ in $B_1$. If $$h_1\geq h_2 \quad \text{a.e in $B_1 \setminus B_{\rho}$,}$$ then  $$h_1\geq h_2 \quad \text{a.e. in $B_1.$}$$  
\end{lem}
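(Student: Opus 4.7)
The proof will be the standard max/min rearrangement argument, adapted to the weighted energy $J$. The plan is as follows.

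First, set $v_1 := \max(h_1,h_2)$ and $v_2 := \min(h_1,h_2)$ on $B_1$. By the hypothesis $h_1\ge h_2$ on $B_1\setminus B_\rho$, we have $v_1=h_1$ and $v_2=h_2$ there, so the perturbations
\begin{equation*}
\phi := v_1-h_1 = (h_2-h_1)^+, \qquad \psi := v_2-h_2 = -(h_2-h_1)^+ = -\phi
\end{equation*}
are supported in $\overline{B_\rho}\subset B_1$ and lie in the weighted space $H^1_0(U_n^2\,dX; B_1)$. By density of $C_0^\infty(B_1)$ in this space, the minimizing property of $h_1,h_2$ extends from smooth test functions to such $\phi,\psi$, giving
\begin{equation*}
J(h_1) \le J(h_1+\phi) = J(v_1), \qquad J(h_2) \le J(h_2+\psi) = J(v_2).
\end{equation*}

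Second, use the standard lattice identity for weak derivatives, $|\nabla v_1|^2+|\nabla v_2|^2 = |\nabla h_1|^2+|\nabla h_2|^2$ a.e.\ (on $\{h_1>h_2\}$ and $\{h_1<h_2\}$ the terms simply swap, and on $\{h_1=h_2\}$ all gradients agree a.e.). Multiplying by $U_n^2$ and integrating on $B_1$ yields $J(v_1)+J(v_2) = J(h_1)+J(h_2)$. Summing the two inequalities above forces equality in each, i.e.\ $J(h_1)=J(v_1)$ and $J(h_2)=J(v_2)$.

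Third, invoke the strict convexity of $J$ (the integrand $U_n^2|\nabla\cdot|^2$ is strictly convex in the gradient on the set where $U_n>0$, which is $B_1$ off the measure-zero plate $P$). Since $v_1=h_1+\phi$ with $\phi\in H^1_0(U_n^2\,dX; B_1)$ and $J(h_1)=J(v_1)$, the midpoint $\frac{1}{2}(h_1+v_1)$ is also a competitor with energy $\le J(h_1)$, and strict convexity forces $\nabla\phi=0$ a.e.\ on $B_1\setminus P$. Combined with the fact that $\phi=0$ on $B_1\setminus B_\rho$ (a set of positive measure), this gives $\phi\equiv 0$ a.e., i.e.\ $v_1=h_1$ a.e., which is precisely $h_1\ge h_2$ a.e.\ in $B_1$.

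The only genuinely delicate point, which I would otherwise gloss as ``standard,'' is checking that the weighted Sobolev framework behaves well enough: specifically, that $(h_2-h_1)^+$ is in $H^1_0(U_n^2\,dX; B_1)$ and that strict convexity of $J$ rules out non-trivial perturbations with equal energy. These are routine because $U_n$ is smooth and strictly positive off $P$, $P$ has zero $(n+1)$-dimensional measure, and $U_n^2\sim r^{-1}$ is locally integrable near the codimension-$2$ singular set $L$, so standard truncation and approximation by smooth compactly supported functions in $B_1\setminus P$ yields density and allows the chain rule for $\max/\min$.
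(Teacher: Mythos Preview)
Your proof is correct and is precisely the standard max/min rearrangement argument that the paper has in mind; in fact the paper does not give a proof at all, stating only that ``the next lemma follows by standard arguments, hence we omit its proof.'' Your treatment of the weighted Sobolev issues (density, chain rule for $\max/\min$, strict convexity off the measure-zero set $P$) is appropriate and fills in exactly what the authors left implicit.
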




Finally one of the main ingredient in the proof of Theorem  \ref{class} is the following Harnack inequality. 

\begin{lem}[Harnack inequality] \label{HImin} Let $h$ be a minimizer to $J$ in $B_1$ which is even with respect to $\{z =0\}$. Then $h \in C^\alpha(B_{1/2})$ and $$[h]_{C^{\alpha}(B_{1/2})} \leq C,$$ with $C$ universal.
\end{lem}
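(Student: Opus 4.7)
The plan is to prove the estimate by Moser iteration adapted to the degenerate energy $J(h)=\int_{B_1}U_n^2|\nabla h|^2\,dX$, combined with the characterization of minimizers from Lemma~\ref{char}. The first ingredient is a Caccioppoli inequality of the form
\begin{equation*}
\int_B U_n^2\,\eta^2|\nabla(h-k)^\pm|^2\,dX \;\le\; C\int_B U_n^2\,|\nabla\eta|^2\bigl((h-k)^\pm\bigr)^2\,dX,
\end{equation*}
obtained by testing the Euler--Lagrange equation against $\eta^2(h-k)^\pm$ for a smooth cutoff $\eta$ respecting the $z$-evenness.

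The main step is a weighted Sobolev inequality for the $z$-even class in $H^1(U_n^2\,dX,B_1)$. Away from the singular axis $L$ this is classical: on any ball $B\subset B_1\setminus N_\delta(L)$, the weight $U_n^2$ is comparable to a smooth positive function times $z^2$ near $P$, and after even reflection in $z$ one is in the unweighted Sobolev setting. Near a point of $L$ I would straighten the slit via the conformal map $\tilde w\mapsto \tilde w^2$ in the $(x_n,z)$-plane, which sends $B_1\setminus P$ onto a half-space; the pulled-back operator has bounded measurable coefficients on unit scales, reducing the Sobolev estimate to the classical one. With Caccioppoli and weighted Sobolev in hand, a De Giorgi iteration gives the local boundedness estimate $\|h\|_{L^\infty(B_{3/4})}\le C\|h\|_{L^2(U_n^2\,dX,\,B_1)}$, and a further Moser iteration on $h$ and $1/h$ for nonnegative $h$ yields a Harnack inequality. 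H\"older regularity on $B_{1/2}$ is then the standard consequence: applied to $\operatorname{osc}_{B_\rho}(h)/2\pm(h - h_{\mathrm{med}})$ it gives oscillation decay $\operatorname{osc}_{B_{\rho/2}}h\le \theta\operatorname{osc}_{B_\rho}h$ for a universal $\theta<1$, which iterates on dyadic scales to the claimed estimate.

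The main obstacle is the weighted Sobolev inequality near $L$, where $U_n^2\sim 1/r$ is genuinely singular and is not a Muckenhoupt $A_2$ weight. The conformal change of variables above resolves this but requires checking carefully that the $z$-evenness of $h$ is preserved under the transformation. An alternative, more elementary, route would bypass Sobolev entirely: the characterization $\Delta(U_nh)=0$ shows that $v:=U_nh$ is harmonic in $B_1\setminus P$ and vanishes on $P\setminus L$, so the boundary Harnack principle for the slit domain, applied to the pair $(v,\,U_n)$, yields H\"older continuity of $h=v/U_n$ up to $P\setminus L$; the Neumann-type condition $\lim_{r\to 0}h_r=0$ from Lemma~\ref{char}, together with the strong maximum principle of Lemma~\ref{rk}, provides a barrier argument that extends H\"older regularity across the tip $L$.
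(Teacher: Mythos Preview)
Your plan diverges substantially from the paper's, and its main step contains a real gap.

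\textbf{What the paper does.} The proof here mirrors the nonlinear Harnack argument of Section~6. Away from $L$ one uses only that $U_nh$ is harmonic in $B_1\setminus P$ and vanishes on $P\setminus L$, so interior Harnack and boundary Harnack for harmonic functions handle those regions exactly as in Cases~1 and~3 of Theorem~\ref{mainH}. The new content is near $L$: instead of any Moser machinery, the paper writes down an \emph{explicit} comparison minimizer
\[
v(X)=-\frac{|x'|^2}{n-1}+2x_nr,
\]
checks via Lemma~\ref{char} that $\Delta(U_nv)=0$ in $B_1\setminus P$ and $v_r\to 0$ on $L$, and then slides translates of $v$ using the comparison principle (Lemma~\ref{mincomp}) to obtain the ``baby Harnack'' step: $h\ge 0$ in $B_1$ and $h(\tfrac14 e_n)\ge 1$ force $h\ge c$ in $B_\delta$. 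This is the direct analogue of Lemma~\ref{babyH}, with $v$ playing the role of $v_R$.

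\textbf{The gap in your Moser route.} Your key assertion is that after the square-root map $\Phi^{-1}$ in the $(x_n,z)$-plane the pulled-back operator has bounded measurable coefficients. It does not. With $x_n+iz=\tfrac12(s+iy)^2$ one finds $U_n=\dfrac{1}{\sqrt2}\,\dfrac{s}{s^2+y^2}$, the Jacobian is $s^2+y^2$, and the energy becomes
\[
\int \frac{s^2}{2(s^2+y^2)}\,|\nabla_{x'}\tilde h|^2
+\frac{s^2}{2(s^2+y^2)^2}\,|\nabla_{s,y}\tilde h|^2\;dx'\,ds\,dy .
\]
On the unit scale $s^2+y^2\sim 1$ the ellipticity constant is $\sim s^2$, which vanishes on $\{s=0\}$; the operator is still degenerate, now along a hyperplane rather than along $L$, and the $z$-evenness of $h$ (which becomes $y$-evenness of $\tilde h$) does not remove this degeneracy. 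Hence the claimed weighted Sobolev inequality and the subsequent De~Giorgi/Moser iteration are not justified by the conformal change alone; the weight $U_n^2\sim 1/r$ near $L$ (and $\sim z^2$ near $P\setminus L$) lies outside the $A_2$ class in every chart you propose.

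\textbf{On your alternative.} The boundary-Harnack idea for the pair $(U_nh,\,U_n)$ is closer in spirit to the paper and does give H\"older continuity of $h$ up to $P\setminus L$. But at $L$ neither function vanishes (both blow up like $r^{-1/2}$), so the standard boundary Harnack hypothesis fails there; and your proposed closure via the Neumann condition from Lemma~\ref{char} is circular, since that lemma assumes $h\in C(B_1)$ and that $h_r$ has a continuous limit---precisely what you are trying to establish. The paper sidesteps all of this with the explicit barrier $v$, which is the missing idea in your argument.
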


The proof of this lemma follows the same lines as the proof of Harnack inequality (Theorem \ref{mainH}) for the free boundary problem \eqref{FB}. We briefly sketch it in what follows. 

\

{\it Sketch of the Proof of Lemma \ref{HImin}.}  The key step consists in proving the following  claim, which plays the same role as Lemma \ref{babyH} in the proof of Theorem \ref{mainH}. The remaining ingredients are the standard  Harnack inequality and Boundary Harnack inequality for harmonic functions.

\

{\it Claim:} There exist universal constants $\delta, c$ such that if $h \geq 0$ a.e. in $B_1$ and
$$h( \frac 1 4 e_n) \geq  1,$$ then
$$h \geq  c \quad \text{a.e. in $B_\delta.$}$$

As in the proof of Lemma \ref{babyH}, since minimizers satisfy the comparison principle Lemma \ref{mincomp}, 
the claim will follow if we provide the right family of comparison minimizers. This family plays the same role as the $v_R$'s in Lemma \ref{babyH} and it is obtained by translations and multiplication by constants of the following function
$$v(X) := -\frac{|x'|^2}{n-1} + 2x_n r.$$ 
We need to show that
$v$ is a  minimizer to $J$ in $B_1$. To do so we prove that $v$ satisfies Lemma \ref{char}. 

To prove that  \begin{equation*}\Delta (U_n v) = 0 \quad \text{in $B_1 \setminus P,$}\end{equation*} we use that $2r U_n = U$ and that $U, U_n$ are harmonic outside of $P$ and do not depend on $x'$. Thus

$$\Delta(U_n v) = -\Delta(\frac{|x'|^2}{n-1}U_n) + \Delta(x_n U) = -2U_n + 2U_n=0.$$

Finally the fact  that
$$\lim_{r \rightarrow 0} v_r(x',x_n,z) = 0,$$ follows immediately from the definition of $v$.
\qed

\

Since our linear problem is invariant under translations in the $x'$-direction, we see that discrete differences  of the form 
$$h(X + \tau) - h(X),$$with $\tau$ in the $x'$-direction are also minimizers. Now by standard arguments (see \cite{CC}) we obtain the following corollary. 

\begin{cor}\label{derivativeH} Let $h$ be a minimizer to $J$ in $B_1$ which is even with respect to $\{z =0\}$. Then $D_{x'}^\beta h \in C^\alpha(B_{1/2})$ and $$[D_{x'}^\beta h]_{C^{\alpha}(B_{1/2})} \leq C,$$ with $C$ depending on $\beta.$

\end{cor}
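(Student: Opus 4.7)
The plan is to exploit the $x'$-translation invariance of the weighted functional $J$ and run a standard difference-quotient bootstrap, feeding the interior Hölder estimate of Lemma~\ref{HImin} into itself.

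First I would observe that $U = U(x_n,z)$ does not depend on $x'$, so $U_n(X+\tau) = U_n(X)$ for every translation $\tau = (\tau',0,0)$ with $\tau' \in \R^{n-1}$. Consequently, if $h$ minimizes $J$ in $B_1$ and $\tau'$ is small, then $h(\,\cdot+\tau)$ minimizes $J$ in $B_1 - \tau$, and by linearity of the Euler--Lagrange equation $\mathrm{div}(U_n^2\nabla\cdot) = 0$ any finite linear combination
\[
q_\tau(X) \;:=\; \frac{h(X+\tau) - h(X)}{|\tau'|^{\alpha}}
\]
is again a minimizer (in the open set where it is defined) and remains even in $z$ because both $h$ and the translation preserve this symmetry.

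Next I would run the bootstrap. Lemma~\ref{HImin} applied to $h$ itself gives $h\in C^\alpha(B_{1/2})$, so that $q_\tau$ is uniformly bounded on $B_{3/4}$ for $\tau'$ small. Applying Lemma~\ref{HImin} to $q_\tau$ yields a uniform $C^\alpha$ bound on $q_\tau$ in $B_{1/2}$; rewriting this as an estimate on second-order $x'$-increments of $h$ shows that $h$ is $C^{2\alpha}$ in the $x'$-variables (within $B_{1/2}$), in the sense of Campanato-type characterizations of Hölder spaces for increments along the $x'$-directions. Iterating this step finitely many times we cross the threshold $k\alpha \geq 1$ and obtain classical first-order differentiability in $x'$ for $h$; the derivative $\partial_{x_i} h$ ($i \le n-1$) is itself a minimizer (it solves the linear equation with the same weight, and its radial derivative at $L$ vanishes because $\partial_{x_i}$ commutes with radial limits in the $(x_n,z)$-plane), so Lemma~\ref{HImin} applied to $\partial_{x_i} h$ gives $\partial_{x_i} h \in C^\alpha(B_{1/2})$. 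This is the case $|\beta|=1$.

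Finally I would iterate the entire scheme: since $\partial_{x_i} h$ is again a minimizer which is even in $z$, the same reasoning applies to it and yields $D_{x'}^\beta h \in C^\alpha(B_{1/2})$ for every multi-index $\beta$ with the constant $C=C(\beta)$ depending only on $\beta$ (through the number of iterations and the nested radii used at each step). The only point that requires any care -- and it is the standard \emph{point of the whole argument} in the Caffarelli--Cabré framework -- is the passage from ``bounded discrete differences rescaled by $|\tau'|^\alpha$'' to genuine pointwise differentiability; this is where the Campanato characterization of $C^{k\alpha}$ (applied to increments in the $(n-1)$ flat directions $x'$ only) is used, and it works here because all the relevant objects -- the equation, the weight, and the singular set $P$ -- are invariant under $x'$-translations, so one does not need to perturb into the $(x_n,z)$-directions where the weight degenerates.
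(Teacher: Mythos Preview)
Your proposal is correct and follows essentially the same approach as the paper: the paper simply observes that discrete $x'$-differences $h(X+\tau)-h(X)$ are again minimizers by translation invariance of the weight $U_n^2$, and then cites the standard bootstrap from \cite{CC}, which is exactly the difference-quotient iteration you have written out in detail. Your expansion of the argument (rescaling by $|\tau'|^\alpha$, reapplying Lemma~\ref{HImin}, and invoking the Campanato characterization to pass from H\"older increments to genuine differentiability) is precisely the content of those ``standard arguments.''
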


We are now ready to prove our main theorem. 

\

\textit{Proof of Theorem \ref{class}.}  We divide our proof in several steps.

\

\textbf{Step 1.} In this step, we show the existence of a classical solution to our problem, which achieves the boundary data continuously.

Assume without loss of generality  that $\bar h \in C^{\infty}(\p B_1)$. The general case when $\bar h \in C(\p B_1)$ follows by approximation and the Comparison Principle. 

We minimize $J(\cdot)$ among all functions $h$ with boundary data $\bar h,$ which are even with respect to $\{z=0\}.$ From Lemma \ref{Unhisharm} we have that $$\Delta (U_n h) = 0 \quad \text{in $B_1 \setminus P$}. $$ In Step 2-3 we will show that $h$ satisfies the estimate \eqref{mainh} and in particular $$ h(x'_0,x_n,x) - h(x'_0, 0,0)=O( r^{3/2}), \quad X_0=(x'_0,0,0) \in B_1 \cap L$$ which gives $$|\nabla_r h| = 0 \quad \text{on $B_1\cap L,$}$$ where we recall that  $$|\nabla_r h|(X_0)=\lim_{(x_n,z)\rightarrow (0,0)} \frac{h(x'_0,x_n, z) - h (x'_0,0,0)}{r}, \quad   r^2=x_n^2+z^2 .$$

Notice that since $|\bar h| \leq 1$, also $|h| \leq 1$ and $h, D_{x'}h \in C^{0,\alpha}(B_1)$ in view of  Lemma  \ref{HImin} and its Corollary.  

We now show that $h$ achieves the boundary data $\bar h$ continuously. Indeed this follows by classical elliptic theory if we restrict to $\p B_1 \setminus P.$ 

If $X_0 \in \p B_1 \cap (P \setminus L)$ then in a small neighborhood of $X_0$ intersected with $B_1 \cap \{z >0\}$ the function $U_n h$ is harmonic continuous up to the boundary and vanishes continuously on $\{z=0\}$ (since $h$ is bounded). The continuity of $h$ at $X_0$ then follows from standard boundary regularity for the harmonic function $U_n h.$

Finally, on the set $\p B_1 \cap L$ as in the case of Laplace equation, it suffices to construct at each point $X_0$ a local barrier (minimizer) for $h$ which is zero at $X_0$ and strictly negative in a neighborhood of $X_0$. 
Such barrier is given by  (see Lemma \ref{char}) $$(x'-x_0') \cdot x_0'.$$

\

\textbf{Step 2.} In this step we wish to prove that 
\begin{equation}\label{exp2} |h(x',x_n,x) - h(x', 0,0) - b(x') r| \leq C r^{3/2}, \quad (x',0,0) \in B_{1/2} \cap L, \end{equation} \begin{equation}\label{hb}|h_r(x',x_n,z) -b(x') | \leq Cr^{1/2}, \quad (x',0,0) \in B_{1/2} \cap L,\end{equation}with $C$ universal and $b(x')$ a Lipschitz function.

Indeed, $h$ solves $$\Delta(U_n h) =0 \quad \text{in $B_1 \setminus P$}. $$ Since $U_n$ is independent on $x'$ we can rewrite this equation as
\begin{equation}\label{2dreduction} \Delta_{x_n,z} (U_n  h) = - U_n \Delta_{x'} h ,\end{equation}
and according to Lemma \ref{HImin} we have that 
$$\Delta_{x'} h \in C^{\alpha}(B_{1/2}),$$ with universal bound. 
Thus, for each fixed $x'$, we need to investigate the 2-dimensional problem 
$$\Delta (U_t h) = U_t f, \quad \text{in $B_{1/2} \setminus \{t \leq 0, z=0\}$}$$
with $$f \in C^{\alpha}(B_{1/2}),$$ and $h$ bounded. Without loss of generality, for a fixed $x'$ we may assume $h(x', 0,0)=0.$

Let $H(t,z)$ be the solution to the problem
$$\Delta H = U_t f, \quad \text{in $B_{1/2} \setminus \{t \leq 0, z=0\},$}$$
such that 
$$H = U_t  h  \quad \text{on $\p B_{1/2}$}, \quad H=0 \quad \text{on $B_{1/2} \cap \{t \leq 0, z=0\}.$}$$

We wish to prove that \begin{equation}\label{equal}U_t h = H.\end{equation}

First notice that  $$ \Delta (H - U_t  h) =0 \quad  \text{in $B_{1/2} \setminus \{t \leq 0, z=0\},$}$$ and $$H = 0 \quad \text{on $\p B_{1/2} \cup (B_{1/2} \cap \{t < 0, z=0\})$}.$$
We claim that
\begin{equation}\label{limitzero}
\lim_{(t,z) \rightarrow (0,0)} \frac{H- U_t h}{U_t}=\lim_{(t,z) \rightarrow (0,0)} \frac{H}{U_t}=0.
\end{equation}
If the claims holds, then given any $\eps >0$
$$-\eps U_t \leq H - U_t h \leq \eps U_t, \quad \text{in $B_\delta$}$$
with $\delta=\delta(\eps).$
Then by the maximum principle the inequality above holds in the whole $B_{1/2}$ and by letting $\eps \rightarrow 0$ we obtain \eqref{equal}.

To prove the claim \eqref{limitzero} we show that $H$ satisfies the following
\begin{equation}\label{Hexp}
|H(t,z) - aU(t,z)| \leq C_0 r^{1/2}U(t,z), \quad r^2=t^2+z^2, a\in \R,
\end{equation} with $C_0$ universal.

To do so, we consider the holomorphic transformation 
$$\Phi: (s,y) \rightarrow (t,z) =(\frac 1 2 (s^2-y^2), sy)$$
which maps  $B_{1} \cap \{s > 0\}$ into $B_{1/2} \setminus  \{t \leq 0, z=0\}$ and call

$$\tilde H (s,y) = H(t,z), \quad \tilde f(s,y) = f(t,z).$$

Then, easy computations show that
$$\Delta \tilde H = s \tilde f \quad \text{in  $B_{1} \cap \{s > 0\}$}, \quad \tilde H=0 \quad \text{on $B_{1} \cap \{s=0\}$.}$$
Since the right-hand side is $C^\alpha$ we conclude that $\tilde H \in C^{2,\alpha}$. In particular $\tilde H_s$ satisfies 
$$|\tilde H_s (s, y) -a| \leq C_0|(s,y)|, \quad a=\tilde H_s(0,0)$$ with $C_0$ universal.
Integrating this inequality between $0$ and $s$ and using that $\tilde H= 0$ on $B_{1} \cap \{s=0\}$ we get that
$$|\tilde H(s,y) - a s| \leq C_0 s|(s,y)|. $$
In terms of $H$, this equation gives us
\begin{equation}\label{HaU}|H - a U| \leq C_0 r^{1/2} U\end{equation} as desired.

Thus \eqref{equal} and \eqref{Hexp} hold and by combining them and using that $U/U_t= 2r$ we get that 
$$|h -  2 a r| \leq 2 C_0 r^{3/2},$$
which is  the desired estimate \eqref{exp2} i.e. (recall that above we assumed $h(x',0,0)=0$)
\begin{equation*}\label{exp3} |h(x',x_n,x) - h(x', 0,0) - b(x') r| \leq 2C_0 r^{3/2}, \end{equation*}
with $$b(x') = 2\tilde H_s (x',0,0).$$

We remark that $b(x')$ is Lipschitz. Indeed, notice that the derivatives $h_i, i=1,\ldots, n-1$ still satisfy the same equation \eqref{2dreduction} as $h$, where the $C^\alpha$ norm of the right-hand side has a universal bound. Thus, we can argue as above to conclude that 
$$|\p_i \tilde H_s(x',0,0)| \leq C,$$ which together with the formula for $b(x')$ shows that $b(x')$ is a Lipschitz function.

Finally, to obtain the second of our estimate \eqref{hb} we proceed similarly as above. 

Since $U_t h= H$ one can compute easily that \begin{equation}\label{hr}U_t h_r = H_r + \frac{1}{2}\frac{H}{r}.\end{equation}
Moreover, after our holomorphic transformation
\begin{equation}\label{holoradial}2r H_r(t,z) = s \tilde H_s(s,y)+ y \tilde H_y(s,y).\end{equation}
As observed above,
$$|\tilde H_s -a(x')|\leq C |(s,y)|,$$
and similarly since  $\tilde H= 0$ on $B_{1} \cap \{s=0\}$ 
$$|\tilde H_y| \leq C s.$$
These two inequalities combined with \eqref{holoradial} give us 
\begin{equation}\label{2rHr}|2r H_r -  a(x') U| \leq Cr^{1/2}U.\end{equation}
Combining \eqref{hr} with \eqref{HaU}-\eqref{2rHr} we obtain \eqref{hb} as desired.

\

\textbf{Step 3.} In this step we show that $h$ satisfies \eqref{mainh}.

In view of Lemma \ref{char} and estimate \eqref{hb} we obtain that $b(x')=0$. Since $b(x')=0$ then \eqref{exp2} reads
$$|h(x',x_n,z) - h(x',0,0)| \leq Cr^{3/2}.$$Since $h$ is $C^{\infty}$ in the $x'$ direction, we have that for a given $X_0 \in B_{1/4} \cap L$
$$|h(x',0,0) - h(x'_0,0,0) - a' \cdot (x'-x'_0)| \leq C|x'-x'_0|^2 ,$$
which combined with the previous inequality gives us the desired bound \eqref{mainh}.

\qed

\section{Appendix}

The purpose of this Section is to provide the proof of Proposition \ref{sub} and Corollary \ref{corest} which have been used in the proof of Harnack Inequality in Section 6. For the reader's convenience we recall their statements.

Let $R>0$ and denote by $$V_R(t,z) = U(t,z)((n-1)\frac{t}{R} + 1 ). $$ Then set
\begin{equation*}v_R(X)= V_R(R- \sqrt{|x'|^2+(x_n-R)^2}, z).\end{equation*}

\textbf{Proposition 6.4.} \textit{If $R$ is large enough, the function $v_R(X)$ is a comparison subsolution to \eqref{FB} in $B_2$ which is strictly monotone increasing in the $e_n$-direction in $B_2^+(v_R)$. Moreover, there exists a function $\tilde v_R$ such that 
\begin{equation}\label{eq}
U(X) = v_R(X - \tilde v_R(X)e_n), \quad \text{in $B_1\setminus P$}
\end{equation}
and
\begin{equation}\label{estvr}
|\tilde v_R(X) - \gamma_R(X)| \leq \frac{C}{R^2} |X|^2, \quad \gamma_R(X)=- \frac{|x'|^2}{2R} + 2(n-1)\frac{x_n r}{R},
\end{equation} with $r= \sqrt{x_n^2+z^2}$ and $C$ universal.}

\begin{proof} We divide the proof of this proposition in two steps.

{\bf Step 1. } In this step we show that $v_R$ is a comparison subsolution in $B_2$ which is monotone in the $e_n$-direction. 

First we need to show that $v_R$ is subharmonic (but not harmonic) in $B_2^+(v_R)$. From the formula for $v_R$ we see immediately that ($R>2$)
$$B_2^+(v_R) = B_2 \setminus (\mathcal{B}_2  \setminus \overline{\mathcal{B}}_R(Re_n)).$$
One can easily compute that on such set,

$$\Delta v_R (X) = (( \p_{tt}+\p_{zz}))V_R)(R-\rho, z) - \frac{n-1}{\rho}\p_t V_R(R-\rho, z), $$
where for simplicity we call $$\rho :=  \sqrt{|x'|^2+(x_n-R)^2}.$$ 
Also for $(t,z)$ outside the set  $\{(t,0) : t \leq 0\} $
\begin{align*}\Delta_{t,z} V_R (t,z)&= (\p_{tt}+ \p_{zz})V_R(t,z) = \frac{2(n-1)}{R} \p_t U(t,z)+ (1+(n-1)\frac{t}{R})\Delta_{t,z} U(t,z)\\
&= \frac{2(n-1)}{R} \p_t U(t,z),\end{align*}
and 
\begin{equation}\label{pV}\p_t V_R(t,z) = (1+(n-1)\frac{t}{R})\p_t U(t,z) + \frac{n-1}{R} U(t,z).\end{equation}
Thus to show that $\Delta v_R$ is subharmonic in $B^+_2(v_R)$ we need to prove that in such set
$$\frac{2(n-1)}{R} \p_t U - \frac{n-1}{\rho}[(1+(n-1)\frac{R-\rho}{R})\p_t U + \frac{n-1}{R} U] \geq 0,$$
where $U$ and $\p_t U$ are evaluated at $(R-\rho,z).$ 

Set $ t = R -\rho$, then straightforward computations reduce the inequality above to
$$(n-1)[2(R- t) - R - (n-1)^2 t]\p_t U(t, z) - (n-1)^2U(t, z) \geq 0.$$
Using that $\p_t U( t, z)= U( t,z)/(2 r)$ with $r^2= t^2 + z^2$, this inequality becomes
$$R \geq 2 t + (n-1)^2 t + 2(n-1) r.$$
This last inequality is easily satisfied for $R$ large enough, since $t, r \leq 3.$

Now we prove that $v_R$ satisfies the free boundary condition in Definition \ref{defsub}.
First observe that $$F(v_R) = \p \mathcal{B}_R(Re_n,0) \cap \mathcal{B}_2,$$ 
and hence it is smooth. By the radial symmetry it is enough to show that the free boundary condition is satisfied at $0 \in F(v_R)$ that is
\begin{equation}\label{expa}v_R(x,z) = \alpha U(x_n,z) + o(|(x,z)|^{1/2}), \quad \text{as $(x,z) \rightarrow (0,0),$}\end{equation}
with $\alpha \geq 1.$

First notice since $U$ is Holder continuous with exponent $1/2$, it follows from the formula for $V_R$ that
$$|V_R(t,z) - V_R(t_0,z)| \leq C |t-t_0|^{1/2} \quad \text{for $|t-t_0| \leq 1.$}$$
Thus for $(x,z) \in B_s,$ $s$ small
$$|v_R(x,z) - V_R(x_n,z)| = |V_R(R-\rho, z) - V_R(x_n, z)| \leq C |R-\rho - x_n|^{1/2} \leq C s,$$
where we have used that (recall that $\rho :=  \sqrt{|x'|^2+(x_n-R)^2}$) 
\begin{equation}\label{easy}R - \rho - x_n = - \frac{|x'|^2}{R-x_n + \rho}.\end{equation}
It follows that for $(x,z) \in B_s$
\begin{align*}|v_R(x,z) - U(x_n,z)| &\leq |v_R(x,z) - V_R(x_n, z)| + |V_R(x_n,z) - U(x_n, z)|\\ & \leq Cs + |V_R(x_n,z) - U(x_n, z)|.\end{align*} Thus from the formula for $V_R$
$$|v_R(x,z) - U(x_n,z)| \leq Cs +(n-1) \frac{|x_n|}{R}U(x_n,z) \leq C' s, \quad (x,z) \in B_s$$
which gives the desired expansion \eqref{expa} with $\alpha=1.$

Now, we show that $v_R$ is strictly monotone increasing in the $e_n$-direction in $B_2^+(v_R)$. Outside of its zero plate,
$$\p _{x_n} v_R(x) = - \frac{x_n-R}{\rho} \p_t V_R(R-\rho, z).$$ Thus we only need to show that $V_R(t,z)$ is strictly monotone increasing in $t$ outside $\{(t,0) : t \leq 0\}$ . This follows immediately from \eqref{pV} and the formula for $U$.

\

{\bf Step 2.} In this step we show the existence of $\tilde v_R$ satisfying \eqref{eq} and \eqref{estvr}. Since we have a precise formula for $v_R$ in terms of $U$, this is only a matter of straightforward (though tedious) computations which we present here for completeness. 

First we show that there exists a unique $\tilde t$ such that 
(here $B_1$ is the 2-dimensional ball)
\begin{equation}\label{2D}U(t,z) = V_R(t+\tilde t, z), \quad \text{in $B_1 \setminus \{(t,0) : t \leq 0\},$}\end{equation}
and 
\begin{equation}\label{esttilde}|\tilde t + \frac{2(n-1)tr}{R}| \leq \frac{\tilde C}{R^2}r^3, \quad r^2= t^2+z^2,\end{equation} with $\tilde C$ universal.
Since $V_R$ is strictly increasing in the $t$-direction in $B_1 \setminus \{(t,0) : t \leq 0\}$ it suffices to show that

\begin{equation} \label{trapped}V_R(t- \frac{2(n-1)tr}{R} - \frac{\tilde C}{R^2}r^3) < U(t,z) <V_R(t- \frac{2(n-1)tr}{R} + \frac{\tilde C}{R^2}r^3).\end{equation}

Let us prove the lower bound. We call $$\bar t =-\frac{2(n-1)tr}{R} - \frac{\tilde C}{R^2}r^3,$$  and we use Taylor's formula to compute

\begin{equation}\label{VR}V_R(t + \bar t, z) = V_R(t,z) + \p_t V_R(t,z) \bar t + \frac{1}{2}E \bar t^2, \quad |E| \leq |\p_{tt} V_R (s,z)|,\end{equation}
with $s$ between $t$ and $t+\bar t$. We claim that 

\begin{equation}\label{claimpttvr}|\p_{tt} V_R (s,z)| \leq \frac{C'}{r^2} U(t,z). \end{equation}

Indeed one can compute that

\begin{align}\label{pttvr}\p_{tt} V_R (s,z) &= (1+(n-1)\frac{s}{R})\p_{tt}U (s,z) + \frac{2(n-1)}{R}\p_t U(s,z)\\ \nonumber &= (1+(n-1)\frac{s}{R})r^{-3/2}\p_{tt}U (\frac s r, \frac z r) + r^{-1/2}\frac{2(n-1)}{R}\p_t U(\frac s r, \frac z r)\end{align}
where we have used that $U$ is homogeneous of degree $1/2$. 

Since $s$ lies between $t$ and $t+\bar t$ we get that  $(s/r,z/r) \in B_{3/2}\setminus B_{1/2},$ thus by boundary Harnack inequality in this annulus we get

$$|\p_{tt} U(\frac s r,\frac z r)| \leq K_1 U(\frac s r,\frac z r), \quad \p_t U(\frac s r,\frac z r) \leq K_2U (\frac s r,\frac z r),$$ with $K_1, K_2$ universal. 

Combining the above inequalities with \eqref{pttvr} we obtain that

$$|\p_ {tt} V_R(s,z)| \leq \bar C r^{-3/2} U(\frac s r,\frac z r). $$

Thus the claim in \eqref{claimpttvr} will follow if we show that for some $K$ universal

$$U(\frac s r,\frac z r) \leq K U(\frac t r,\frac z r).$$

Again, this follows by the boundary Harnack inequality in the annulus $B_{3/2}\setminus B_{1/2}$ between $U(\frac t r, \frac z r)$ and its translation $U(\frac{t+\tau}{r},\frac z r)$, for $\tau$ small. Our claim is thus proved.

Thus, using \eqref{VR} together with this claim, the lower bound in \eqref{trapped} will be proved if  we show that

$$U(t,z) > V_R(t,z)  + \p_t V_R(t,z) \bar t  + \frac{C' }{2 r^2} U(t,z)\bar t^2.$$

From the definition of $V_R$ this is equivalent to showing that 

$$ (n-1)\frac{t}{R}U(t,z)  + [(1+(n-1)\frac{t}{R})\p_t U(t,z) + \frac{n-1}{R}U(t,z)] \bar t  +  \frac{C'}{2 r^2}U(t,z)\bar t^2 < 0.$$

Dividing everything by $\p_t U(t,z) = \frac{1}{2r} U(t,z)$ we get

$$ \frac{2(n-1)r t}{R}  + [(1+(n-1)\frac{t}{R}) + \frac{2r(n-1)}{R}] \bar t  +  \frac{C'}{r} \bar t^2 < 0 ,$$
and using the definition of $\bar t$ we finally need to show that

$$ (n-1)\frac{t+2r}{R} \bar t  +  \frac{C'}{r}\bar t^2 < \frac{\tilde C}{R^2}r^3.$$

Using that for $R$ large $$|\bar t| \leq K r^2/R,$$ for $K$ universal, we easily obtain that the inequality above holds for the appropriate $\tilde C$ (universal) and $R$ large.

To conclude our proof, we use \eqref{easy} to write
$$v_R(X- \tilde{v}_R e_n) = V_R(R -\rho(\tilde{v}_R), z) = V_R(x_n - \tilde{v}_R - \frac{|x'|^2}{R-x_n+\tilde{v}_R+\rho(\tilde{v}_R)},z),$$  with 
$$\rho(\eta):=  \sqrt{|x'|^2 + (x_n-\eta - R)^2}.$$

In view of \eqref{2D} if there exists $\tilde{v}_R=\tilde v_R(X)$ such that 
\begin{equation}\label{eta}-\tilde{v}_R- \frac{|x'|^2}{R-x_n+\tilde{v}_R+\rho(\tilde{v}_R)} = \tilde t,\end{equation} then 
$$v_R(X-\tilde{v}_R e_n) = U(X), \quad \text{in $B^+_1(U)$.}$$
By the strict monotonicity of $v_R$ in the $e_n$-direction in $B^+_1(v_R)$, in such set $\tilde {v}_R$ must be unique.
 
Thus our claim will follow if we show that there exists $\tilde{v}_R$ satisfying \eqref{eta} and such that
$$|\tilde{v}_R(X) - \gamma_R(X)| \leq C\frac{|X|^2}{R^2}.$$  To do so, we call
$$f(\eta) = -\eta- \frac{|x'|^2}{R-x_n+\eta+\rho(\eta)}, \quad -1 \leq \eta \leq 1,$$ and we show that
$$f(\gamma_R(X) + C \frac{|X|^2}{R^2}) \leq \tilde t \leq f(\gamma_R(X) - C \frac{|x|^2}{R^2}).$$
In view of \eqref{esttilde} we need to prove that
$$f(\gamma_R(X) + C \frac{|X|^2}{R^2}) \leq -\frac{2(n-1)x_nr}{R} - \tilde C\frac{r^3}{R^2},$$
and 
$$f(\gamma_R(X) - C \frac{|X|^2}{R^2}) \geq -\frac{2(n-1)x_nr}{R} + \tilde C\frac{r^3}{R^2}.$$

Let us prove the first inequality (the second one follows similarly.) Call $$\bar \eta = \gamma_R(X) + C \frac{|X|^2}{R^2}.$$
From the definition of $f$ and $\gamma_R$ the desired  inequality is equivalent to 
$$\frac{|x'|^2}{2R} - C\frac{|X|^2}{R^2} - \frac{|x'|^2}{R-x_n+\bar \eta+\rho(\bar \eta)} \leq - \tilde C\frac{r^3}{R^2}.$$
Clearly $-1 \leq \bar \eta \leq 1$, and one can easily verify that
$$R-x_n+\bar \eta+\rho(\bar \eta) \leq 2R+5.$$
Thus 
$$\frac{|x'|^2}{2R} - \frac{|x'|^2}{R-x_n+\bar \eta+\rho(\bar \eta)} \leq |x'|^2(\frac{1}{2R} - \frac{1}{2R+5}) \leq\frac{ |x'|^2}{R^2},$$
and the desired inequality follows if we show that
$$\frac{|x'|^2}{R^2} -  C\frac{|X|^2}{R^2} \leq - \tilde C\frac{r^3}{R^2}. $$
This inequality is trivially satisfied as long as $C - \tilde C \geq 1.$ 
\end{proof}

We now recall the statement of Corollary \ref{corest} and sketch its proof.

\
 
\noindent\textbf{Corollary 6.5.} \textit{There exist $\delta, c_0,C_0, C_1$ universal constants, such that 
\begin{equation}
\label{2a} v_R(X+ \frac{c_0}{R}e_n) \leq (1+\frac{C_0}{R}) U(X), \quad \text{in $ \overline{B}_1 \setminus B_{1/4},$}\end{equation} with strict inequality on $F(v_R(X+ \frac{c_0}{R} e_n)) \cap  \overline{B}_1 \setminus B_{1/4},$
\begin{align}
\label{4a}& v_R(X + \frac{c_0}{R}e_n) \geq U(X + \frac{c_0}{2R} e_n), \quad \text{in $B_{\delta},$}\\
\label{3a}& v_R(X - \frac{C_1}{R} e_n) \leq U(X), \quad\text{in $\overline{B}_1.$}
\end{align}}

\begin{proof} 
Estimates \eqref{4a} and \eqref{3a} are immediate consequences of \eqref{estvr} and Lemma \ref{elem}.

To obtain \eqref{2a}, notice that in view of  \eqref{estvr} and Lemma \ref{elem},
$$ v_R(X+ \frac{c_0}{R}e_n) \leq U(X) \quad \text{in $\{X\in B_1 : |x' | \geq 1/8, |x_n|  \leq \bar \delta\},$}$$ for some $c_0, \bar \delta$ small universal and $R$ large (with strict inequality on $F(v_R(X+ \frac{c_0}{R} e_n))$). 
Hence the estimate \eqref{2a} holds on  the set $\{X \in B_1 \setminus B_{1/4} : \sqrt{x_n^2 +z^2} \leq \bar \delta\}$ and we only need to prove it on the complement of this set. 

Again, from \eqref{estvr} and Lemma \ref{elem} we get that 
\begin{equation}\label{mon}  v_R(X+ \frac{c_0}{R}e_n) \leq U(X+ \frac{\bar C}{R}e_n) \quad \text{in $B_1,$}
\end{equation} for $\bar C$ large universal. 
From Lemma \ref{cor2} we know that 
$$U(x_n + \frac{\bar C}{R}, z) \leq (1+C \frac{\bar C}{R}) U(x_n, z),$$
as long as $\sqrt{x_n^2 + z^2}> \bar \delta$, with $C= C(\bar \delta)$ (and $R$ large). Combining this fact with \eqref{mon} we get 
$$v_R(X+ \frac{c_0}{R}e_n) \leq (1+\frac{C_0}{R})U(x_n,z),$$ on the desired set. 

\end{proof}

\end{document}